\documentclass[11pt,a4paper, reqno]{amsart}
\usepackage{fullpage}
\usepackage{amssymb}
\usepackage{mathtools}
\usepackage{amsthm}
\usepackage{caption}
\usepackage{subcaption}
\usepackage{amsmath}
\usepackage{comment}
\usepackage{wrapfig}
\captionsetup[subfigure]{labelfont=rm}
\usepackage[usenames]{color}
\usepackage[all]{xy}
\usepackage{graphicx,epsfig,float}
\usepackage{tikz-cd}
\usetikzlibrary{positioning}
\usetikzlibrary{shapes,arrows.meta,calc}
\usepackage{tikz}

\usepackage{chngcntr}
\counterwithin{figure}{section}
\usepackage{thmtools}
\usepackage{thm-restate}

\usepackage[colorlinks=true,
linkcolor=webgreen,
filecolor=webbrown,
citecolor=webgreen]{hyperref}
\definecolor{webgreen}{rgb}{0,.5,0}
\definecolor{webbrown}{rgb}{.6,0,0}
\definecolor{red}{rgb}{1,0,0}
\usepackage{cleveref}
\usepackage{color}
\usepackage{diagbox}

\tikzset{
	block/.style={rectangle, draw,  text width=2em,
		text centered, rounded corners, minimum height=1.5em},
	arrow/.style={-{Stealth[]}}
}
\tikzset{
	every node/.style={font=\sffamily\small},
	main node/.style={thick,circle,draw,font=\sffamily\Large}
}

\newcolumntype{P}[1]{>{\centering\arraybackslash}p{#1}}
\newcolumntype{M}[1]{>{\centering\arraybackslash}m{#1}}


\newcommand{\Z}{\mathbb{Z}}

\newcommand{\K}{\mathcal{K}}
\newcommand{\W}{\mathcal{W}}
\newcommand{\mes}{\textrm{mes}}
\newcommand{\I}{\mathbb{I}}
\newcommand{\vr}[2]{\mathcal{VR}(#1;#2)}
\newcommand{\A}{\mathcal{A}}
\newcommand{\B}{\mathcal{B}}
\newcommand{\C}{\mathcal{C}}
\newcommand{\D}{\mathcal{D}}



\newtheorem{theorem}{Theorem}
\numberwithin{theorem}{section}

\newtheorem{lemma}[theorem]{Lemma}

\newtheorem{claim}{Claim}
\newtheorem{proposition}[theorem]{Proposition}
\newtheorem{defn}[theorem]{Definition}
\newtheorem{rmk}{Remark}
\newtheorem{conj}{Conjecture}

\newtheorem{question}[theorem]{Question}


\makeatletter
\@namedef{subjclassname@2020}{\textup{2020} Mathematics Subject Classification}
\makeatother

\hbadness=99999
\hfuzz=9999pt

\begin{document}

\title{On  Vietoris--Rips complexes (with scale 3) of hypercube graphs}

\author{Samir Shukla}
\address{Indian Institute of Technology (IIT) Mandi,  India}
\email{samir@iitmandi.ac.in}


\begin{abstract}
	For a metric space $(X, d)$ and a scale parameter $r \geq 0$,  the Vietoris-Rips complex $\mathcal{VR}(X;r)$ is a simplicial complex on vertex set $X$, where a finite set $\sigma \subseteq X$ is a simplex if and only if the diameter of $\sigma$ is at most $r$. For $n \geq 1$, let $\I_n$ denote the $n$-dimensional hypercube graph.   In this paper, we show that $\mathcal{VR}(\mathbb{I}_n;r)$ has non trivial reduced homology only in dimensions $4$ and $7$. Therefore, we answer a question posed by Adamaszek and Adams recently.

	A (finite) simplicial complex $\Delta$ is $d$-collapsible if it can be reduced to the
	void complex by repeatedly removing a face of size at most $d$ that is contained in a unique
	maximal face of $\Delta$. The collapsibility number of $\Delta$ is the minimum integer $d$ such that $\Delta$ is $d$-collapsible. We show that  the collapsibility number of $\mathcal{VR}(\mathbb{I}_n;r)$ is $2^r$ for $r \in \{2, 3\}$.
\end{abstract}

\keywords{Vietoris--Rips complexes, d-collapsibility, Hypercube graphs, Homology}
\subjclass[2020]{05E45, 55U10, 55N31} 

\maketitle

\vspace{-0.38cm}

\section{Introduction}

For a metric space $(X, d)$ and a scale parameter $r \geq 0$,  the Vietoris-Rips complex $\vr{X}{r}$ is a simplicial complex on vertex set $X$, where a finite set $\sigma \subseteq X$ is a simplex if and only if the diameter of $\sigma$ is at most $r$, {\it i.e.}, $\vr{X}{r} = \{\sigma \subseteq X : |\sigma| < \infty \ \text{and} \ d(x, y) \leq r \ \forall \ x, y \in \sigma \}$; here $|\cdot|$ denotes the cardinality of a set. The Vietoris-Rips complex was first introduced by Vietoris \cite{Vietoris27} to define a homology theory for metric spaces and independently re-introduced by E. Rips for studying  hyperbolic groups, where it has been popularised as Rips-complex \cite{Ghys, Gromov87}. The idea behind introducing these complexes was to create a finite simplicial model for metric spaces. The Vietoris-Rips complex and its homology  have become an important 
tools in the applications of algebraic topology. In topological data analysis, it has been used 
to analyse data with persistent homology \cite{Bauer2021, Carlsson2009, Zomorodian2010, ZomorodianCarlsson2005}. These complexes  have been used heavily in computational topology, as a simplicial
model for point-cloud data \cite{Carlsson06, CarlssonIshkhanovDeSilvaZomorodian2008, CarlssonZomorodian2005, DesilvaCarlsson2004} and as simplicial completions of
communication links in sensor networks \cite{DesilvaGhrist2006, DesilvaGhrist2007, Muhammad2007}.
For more on these complexes, the interested
reader is referred to \cite{Adamaszek2013, AdamaszekAdam2017,  AAF2018, AdamaszekAdamFrick2016, Adamaszek2020, AFV, chambers2010vietoris,  gasparovic2018complete, lesnick2020quantifying, virk20181, virk2017approximations}.

Consider any graph $G$ as a metric space, where  the distance between any two vertices is the length of a shortest path between them.  The study of Vietoris-Rips complexes of hypercube graphs  was initiated by Adamaszek and Adams in \cite{Adamaszek2021}. These questions on
hypercubes arose from work by Kevin Emmett, Ra{\'u}l Rabad{\'a}n, and Daniel Rosenbloom related to the persistent homology formed from genetic trees, reticulate evolution, and medial
recombination \cite{emmett2016topology, emmett2015quantifying}. 

For a positive integer $n$, let  $\I_n$ denote the $n$-dimensional hypercube graph (see \Cref{defn:hypercube}). In \cite{Adamaszek2021}, Adamaszek and Adams proved that   $\vr{\I_n}{2}$ is homotopy equivalent to a wedge sum of spheres of dimension $3$. By using a computer calculation they proved the following.

\begin{proposition}\label{thm:adm} \cite{Adamaszek2021} Let $5 \leq n \leq 7$ and $0\leq i \leq 7$. Then $\tilde{H}_i(\vr{\I_n}{3};\Z) \neq 0$ if and only if $i \in \{4, 7\}$.
\end{proposition}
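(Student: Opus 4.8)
The plan is to prove the statement separately for each of the three values $n \in \{5,6,7\}$ by combining an upper bound on the homological dimension, a high-connectivity estimate at the bottom, and an explicit identification of the surviving cycles, rather than by a single black-box computation. First I would organize everything around the action of the hyperoctahedral group $B_n$ (the symmetry group of $\I_n$), which acts simplicially on $\vr{\I_n}{3}$: this lets me pick canonical representatives of orbits of faces, drastically reduces the bookkeeping, and means every homology class I exhibit may be assumed to be as symmetric as possible.

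For the upper bound I would invoke the collapsibility result established in this paper, namely that $\vr{\I_n}{3}$ is $2^3 = 8$-collapsible; this forces $\tilde{H}_i(\vr{\I_n}{3};\Z) = 0$ for all $i \geq 8$, so only degrees $0,\dots,7$ can carry homology. To kill the bottom of this range I would show that $\vr{\I_n}{3}$ is $3$-connected, which gives $\tilde{H}_i = 0$ for $0 \le i \le 3$. This is plausible because at scale $3$ the neighborhood of any vertex is very large (every vertex within Hamming distance $3$ is adjacent), so small spheres can be contracted either by a nerve/folding argument or by repeatedly coning off a well-chosen vertex or subcube.

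The substantive work is then confined to degrees $4,5,6,7$, and I would set up an induction on $n$ using a coordinate splitting: fixing the last coordinate writes $\vr{\I_n}{3}$ in terms of the two facet copies of a $\vr{\I_{n-1}}{3}$-type complex together with the cross faces joining them, and I would run the associated Mayer--Vietoris / cofiber sequence (equivalently, analyze $\lk$ and $\del$ of a suitable vertex). The degree-$7$ classes I expect to come from the $4$-dimensional subcubes: the only non-edges of $\vr{\I_4}{3}$ are the eight antipodal (distance-$4$) pairs, so $\vr{\I_4}{3}$ is exactly the boundary of the $8$-dimensional cross-polytope, an $S^7$. These embedded spheres should generate $H_7$, and counting their independent copies modulo the $B_n$-symmetry and the Mayer--Vietoris relations yields the Betti number. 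For the degree-$4$ classes, which are the genuinely surprising part, I would search for a small symmetric sub-configuration supporting an explicit $4$-cycle that survives the collapsing, and certify its nontriviality by pairing it against the corresponding critical cells.

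The hard part will be proving the exact vanishing in degrees $5$ and $6$: collapsibility only pushes the top down to $8$ and $3$-connectivity only controls the bottom, so these two intermediate degrees require a genuinely quantitative argument. I would attack this by constructing an explicit discrete Morse matching (compatible with the $B_n$-action) whose critical cells occur only in dimensions $4$ and $7$, or, failing that, by carrying out the induction on $n$ in such a way that the Mayer--Vietoris connecting homomorphisms in degrees $5$ and $6$ are shown to be isomorphisms. This is precisely the step where Adamaszek and Adams resorted to a computer, and making it conceptual for the three cases---while simultaneously certifying that degrees $4$ and $7$ are nonzero---is where essentially all the difficulty lies.
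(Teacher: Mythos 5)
You should first be clear about what the paper's ``own proof'' of this statement is: there is none. The proposition is quoted verbatim from Adamaszek--Adams \cite{Adamaszek2021}, where it was established by an explicit computer calculation for $n=5,6,7$; the present paper uses it as an input (indeed, \Cref{lem:5homology} and \Cref{thm:main} take the cases $X\cong\Delta_6$ and $X\cong\Delta_7$ of this very proposition as the base of their inductions). So the only question is whether your outline is an independent proof, and it is not. The two bracketing steps you describe are sound and non-circular: $8$-collapsibility (\Cref{thm:collapsibility}, whose proof does not use this proposition) together with \Cref{prop:collapsibilitysubcomplex} kills $\tilde{H}_i$ for $i\geq 8$; the vanishing for $i\leq 3$ is \Cref{lem:contractible}; and your identification of $\vr{\I_4}{3}$ with the boundary of the $8$-dimensional cross-polytope, hence with $S^7$, combined with the retraction of \Cref{lem:retraction}, does give $\tilde{H}_7\neq 0$. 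But everything in degrees $4$, $5$ and $6$ --- which is the entire content of the proposition --- is left as a programme (``I would search for\dots'', ``or, failing that\dots''), with the hardest step explicitly acknowledged as unresolved.

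Moreover, the inductive scheme you propose cannot supply the missing steps with the tools available here, for a structural reason. The paper's mechanism for pushing a $p$-cycle into the subcomplex $\partial(\Delta_n)$ (\Cref{lem:homology}) requires $p\leq n-2$, so it says nothing about a $4$-cycle in $\Delta_5$, a $5$-cycle in $\Delta_6$, or a $6$-cycle in $\Delta_7$: these are exactly the cases where no coordinate splitting reduces the problem, which is why the Mayer--Vietoris induction of \Cref{thm:main} only begins at $m\geq 7$ and must import the $n=6,7$ data from this proposition. You also have not exhibited a generator of $\tilde{H}_4(\vr{\I_5}{3})$, and your proposed certificate of its nontriviality (pairing against critical cells of an equivariant discrete Morse matching with critical cells only in dimensions $4$ and $7$) presupposes a matching whose existence is precisely the unproved part of your plan. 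As written, the proposal is a reasonable research outline whose open steps coincide with the assertion to be proved; an explicit computation for $n=5,6,7$ (by machine or otherwise) is still required.
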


Further, they asked, in what homological dimensions do the Vietoris-Rips complexes $\vr{\I_n}{3}$ have nontrivial reduced homology? It is easy to check that the complexes $\vr{\I_n}{3}$ are contractible for $1 \leq n \leq 3$ and $\vr{\I_4}{3} \simeq S^7$. In this paper we prove the following.

\begin{restatable}{thm}{maintheorem}
	\label{thm:main3}
	Let $n \geq 5$. Then $\tilde{H}_i(\vr{\I_n}{3};\Z) \neq 0$ if and only if $i \in \{4, 7\}$. 
\end{restatable}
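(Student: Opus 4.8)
The plan is to argue by induction on $n$, taking $5 \le n \le 7$ as base cases supplied by \Cref{thm:adm}, and to split the conclusion into three separate tasks: vanishing above dimension $7$, nonvanishing in dimensions $4$ and $7$, and vanishing in the remaining dimensions $1,2,3,5,6$ (dimension $0$ being immediate since $\vr{\I_n}{3}$ is connected). The top range is the cheapest: the collapsibility result announced in the abstract shows $\vr{\I_n}{3}$ is $2^3=8$-collapsible, and an $8$-collapsible complex has Leray number at most $8$, hence vanishing reduced homology in all dimensions $\ge 8$. So I would establish the $8$-collapsibility first and quote it here.

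Nonvanishing in dimensions $4$ and $7$ I expect to come essentially for free from a retraction. Forgetting the last coordinate gives a map $p\colon \I_n \to \I_{n-1}$ that is distance non-increasing, since the Hamming distance satisfies $d(u,v) = d(p(u),p(v)) + [u_n \ne v_n] \ge d(p(u),p(v))$. Hence $p$ sends diameter-$\le 3$ sets to diameter-$\le 3$ sets and induces a simplicial map $\vr{\I_n}{3}\to\vr{\I_{n-1}}{3}$; identifying $\I_{n-1}$ with the sub-hypercube $\{x : x_n = 0\}$ makes $p$ a retraction onto the induced subcomplex $\vr{\I_{n-1}}{3}$. Consequently $\tilde{H}_*(\vr{\I_{n-1}}{3};\Z)$ is a direct summand of $\tilde{H}_*(\vr{\I_n}{3};\Z)$, so nonvanishing in dimensions $4$ and $7$ propagates from $n=5$ (\Cref{thm:adm}) to every $n \ge 5$.

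For the vanishing in dimensions $1,2,3,5,6$ I would run the induction through a Mayer--Vietoris argument adapted to the splitting of $\I_n$ into its two parallel copies $V_0 = \{x_n = 0\}$ and $V_1 = \{x_n = 1\}$ of $\I_{n-1}$. Cover $\vr{\I_n}{3}$ by the closed stars $A = \bigcup_{v \in V_0}\overline{\mathrm{st}}(v)$ and $B = \bigcup_{v \in V_1}\overline{\mathrm{st}}(v)$; every mixed simplex lies in $A$ (extend by one of its $V_0$-vertices), so $A \cup B = \vr{\I_n}{3}$. The plan is to show that $A$ and $B$ each deformation retract onto the corresponding induced copy of $\vr{\I_{n-1}}{3}$, using the retraction above together with a contiguity or Quillen-fibre argument, so that the sequence reads $\cdots \to \tilde{H}_i(A\cap B;\Z) \to \tilde{H}_i(\vr{\I_{n-1}}{3};\Z)^{\oplus 2} \to \tilde{H}_i(\vr{\I_n}{3};\Z) \to \tilde{H}_{i-1}(A\cap B;\Z) \to \cdots$, compatibly with the splitting coming from the retraction. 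Feeding in the inductive hypothesis for $\vr{\I_{n-1}}{3}$ then reduces the whole statement to computing $\tilde{H}_*(A\cap B;\Z)$ and checking it is concentrated in dimensions $3,4,6,7$, where the degree shift in the sequence routes its contributions into dimensions $4$ and $7$ of $\vr{\I_n}{3}$.

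The main obstacle is precisely this last computation: describing the intersection $A \cap B$ — the simplices extendable within scale $3$ by a vertex from each half — and pinning down its homotopy type. Combinatorially, such a simplex projects to a set in $\I_{n-1}$ admitting a common near-centre within distance $2$ in both layers, and I expect $A\cap B$ to decompose up to homotopy into pieces governed by smaller complexes $\vr{\I_{n-2}}{3}$ and their joins, which is why I would keep several consecutive base cases from \Cref{thm:adm} in hand. Making this identification precise, and verifying that no homology escapes into dimensions $1,2,3,5,6$ through the connecting maps, is where the real work lies; the collapsibility bound and the retraction are the two comparatively soft ingredients that frame it.
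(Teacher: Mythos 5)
Your two soft ingredients match the paper exactly: the $8$-collapsibility together with \Cref{prop:collapsibilitysubcomplex} kills $\tilde{H}_i$ for $i\geq 8$, and retractions onto sub-hypercubes (the paper's \Cref{lem:retraction}, applied to $4$- and $5$-dimensional subcubes so that \Cref{thm:adm} applies) give the nonvanishing in dimensions $7$ and $4$. But the entire content of the theorem is the vanishing in dimensions $1,2,3,5,6$, and there your proposal is a plan rather than a proof: you defer both the identification of $A\cap B$ and the behaviour of the connecting homomorphisms, and these are precisely where all the work lies. The paper's route is genuinely different and much heavier. It first characterises the maximal simplices of $\Delta_n=\vr{\I_n}{3}$ (\Cref{thm:maximalsimplex}), uses that to show every $p$-cycle with $p\leq n-2$ is homologous to one supported on $\partial(\Delta_n)=\bigcup_{i,\epsilon}\Delta_n^{i,\epsilon}$, the union of all $2n$ facet subcomplexes rather than your two-halves cover (\Cref{lem:homology}), and then runs a double induction over a bespoke class $\W_n^m$ of finite unions of Rips complexes of $m$-dimensional subcubes, with the nerve lemma handling the base $m=3$ and a dedicated injectivity lemma for $\tilde{H}_4$ of inclusions (\Cref{lem:injective}) controlling the Mayer--Vietoris connecting maps. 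None of this machinery has an analogue in your sketch.

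Two concrete obstructions in your specific plan. First, the claim that $A=\bigcup_{v\in V_0}\overline{\mathrm{st}}(v)$ deformation retracts onto $\vr{\I_{n-1}}{3}$ is unsupported, and the natural contiguity argument fails: if $\sigma\in\overline{\mathrm{st}}(v)$ contains $u,w\in V_1$ with $d(u,w)=3$, then $d(u,p(w))=d(p(u),p(w))+1=4$, so $\sigma\cup p(\sigma)$ is not a simplex and the coordinate-forgetting map is not contiguous to the identity on $A$. Second, even granting the cover, deducing $\tilde{H}_5(\vr{\I_n}{3})=0$ from $\tilde{H}_5(A)=\tilde{H}_5(B)=0$ requires the map $\tilde{H}_4(A\cap B)\to\tilde{H}_4(A)\oplus\tilde{H}_4(B)$ to be injective; since $\tilde{H}_4$ of these complexes is genuinely nonzero, this cannot be dismissed, and establishing exactly this kind of injectivity (via retractions and an induction over the pieces of the intersection) is the technical core of the paper's Lemmas \ref{lem:representation}--\ref{thm:main}. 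As it stands the proposal identifies the frame of the argument but omits the parts that make it a proof.
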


Let $\Delta$ be a (finite) simplicial complex. Let $\gamma \in \Delta$ such that  $|\gamma|\leq d$ and $\sigma \in \Delta$ is the only maximal simplex that contains $\gamma$.   An  {\it elementary $d$-collapse} of $\Delta$ is the simplicial complex $\Delta'$ obtained from $\Delta$ by
removing all those simplices $\tau $  of $\Delta$ such that
$\gamma \subseteq \tau \subseteq \sigma$, and we denote this elementary  $d$-collapse by  $
\Delta \xrightarrow{\gamma} \Delta'$.

The complex $\Delta$ is called \emph{$d$-collapsible} if there exists a sequence of elementary $d$-collapses 
\[
\Delta=\Delta_1\xrightarrow{\gamma_1} \Delta_2 \xrightarrow{\gamma_2} \cdots \xrightarrow{\gamma_{k-1}} \Delta_k=\emptyset
\]
from $\Delta$ to the void complex $\emptyset$. Clearly, if $\Delta$ is $d$-collapsible and $d < c$, then $\Delta$ is $c$-collapsible. The \emph{collapsibility number} of $\Delta$ is the minimal integer $d$ such that $\Delta $ is $d$-collapsible.

The notion of $d$-collapsibility of simplicial complexes was introduced by Wegner  \cite{Wegner1975}. In combinatorial topology it is an important problem to determine the collapsibility number or bounds for the collapsibility number  of a simplicial  complex and it  has been widely studied
(see  \cite{Civan2022, ChoiKimPark2020, Kimlew2021,Lew2018, MT09}). 
A simple consequence of $d$-collapsibility is the following: 
\begin{proposition}\cite{Wegner1975}
	\label{prop:collapsibilitysubcomplex}
	If $X$ is $d$-collapsible then it is homotopy equivalent to a simplicial complex of dimension smaller than $d$. 
\end{proposition}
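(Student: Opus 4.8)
The plan is to process the $d$-collapsing sequence $X=\Delta_1\xrightarrow{\gamma_1}\Delta_2\xrightarrow{\gamma_2}\cdots\xrightarrow{\gamma_{k-1}}\Delta_k=\emptyset$ one elementary collapse at a time, separating the steps into two qualitatively different types, and then to reconstruct $X$ from the void complex by reading the sequence backwards. The point is that most steps do not change homotopy type at all, while the few that do only attach cells of dimension at most $d-1$, which is exactly what the collapsibility bound $|\gamma_i|\le d$ buys us.

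First I would record the local fact underlying everything. In an elementary $d$-collapse $\Delta\xrightarrow{\gamma}\Delta'$ the removed simplices are precisely the interval $[\gamma,\sigma]=\{\tau:\gamma\subseteq\tau\subseteq\sigma\}$, and since $\sigma$ is the \emph{only} maximal face containing $\gamma$, this interval is exactly the open star of $\gamma$ in $\Delta$; computing the link gives $\lk_\Delta(\gamma)=2^{\sigma\setminus\gamma}$, the full simplex on the vertex set $\sigma\setminus\gamma$. In the first case $\gamma\subsetneq\sigma$, so $\sigma\setminus\gamma\neq\emptyset$ and this link is a nonempty full simplex, hence collapsible; consequently the removal of $[\gamma,\sigma]$ is a genuine simplicial collapse, realized explicitly by fixing a pivot $v\in\sigma\setminus\gamma$ and successively removing the free pairs $\tau\leftrightarrow\tau\cup\{v\}$, so $\Delta\searrow\Delta'$ is a deformation retraction and $\Delta\simeq\Delta'$. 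In the second case $\gamma=\sigma$, so $\gamma$ is a maximal face with $|\gamma|\le d$ and $\Delta'=\Delta\setminus\{\gamma\}$; here $\Delta$ is obtained from $\Delta'$ by attaching a single cell of dimension $\dim\gamma=|\gamma|-1\le d-1$ along its boundary $\partial\gamma\subseteq\Delta'$.

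Next I would run a reverse induction to build, for each $i$, a CW complex $Z_i$ with $Z_i\simeq\Delta_i$ and $\dim Z_i\le d-1$. Start from $Z_k=\emptyset\simeq\Delta_k$. For a step of the first type, set $Z_i:=Z_{i+1}$, which is legitimate because $\Delta_i\simeq\Delta_{i+1}\simeq Z_{i+1}$, and the dimension is unchanged. For a step of the second type, form $Z_i$ from $Z_{i+1}$ by attaching one cell of dimension $\le d-1$ along the composite of $\partial\gamma_i\hookrightarrow\Delta_{i+1}$ with a homotopy equivalence $\Delta_{i+1}\simeq Z_{i+1}$; by the gluing lemma $Z_i\simeq\Delta_{i+1}\cup\gamma_i=\Delta_i$, and $\dim Z_i\le d-1$ is preserved. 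Taking $i=1$ yields a CW complex $Z_1\simeq X$ with $\dim Z_1\le d-1<d$. Finally I would invoke the standard fact that a CW complex of dimension $m$ is homotopy equivalent to a simplicial complex of dimension $m$ to replace $Z_1$ by a simplicial complex of dimension smaller than $d$, which finishes the argument.

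The main obstacle is the first-type local analysis: verifying that deleting the \emph{entire} interval $[\gamma,\sigma]$ (rather than a single classical free pair) is a deformation retraction. The clean resolution is the link computation $\lk_\Delta(\gamma)=2^{\sigma\setminus\gamma}$, which exhibits the link as a cone and thereby reduces the interval deletion to an explicit chain of elementary free-face collapses through the chosen pivot $v$. Everything else is bookkeeping: only the second-type deletions of maximal faces alter the homotopy type, and the hypothesis $|\gamma_i|\le d$ caps the dimension of the attached cells at $d-1$, which is precisely what forces the homotopy type of $X$ into dimension smaller than $d$. (The same mechanism can be phrased as a discrete Morse matching whose critical cells are exactly the faces $\gamma_i$ with $\gamma_i=\sigma_i$, all of dimension $<d$, but the inductive homotopy argument above avoids having to check global acyclicity of that matching.)
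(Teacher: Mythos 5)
The paper does not prove this proposition at all: it is quoted verbatim from Wegner's 1975 paper with only a citation, so there is no in-text argument to compare yours against. Your proof is correct and is essentially the standard argument for this fact. The two key points are both handled properly: (1) when $\gamma_i\subsetneq\sigma_i$, the deleted interval $[\gamma_i,\sigma_i]$ is exactly the open star of $\gamma_i$ (uniqueness of the maximal face $\sigma_i$ forces every coface of $\gamma_i$ into the interval), its link is the full simplex on $\sigma_i\setminus\gamma_i$, and the pivot matching $\tau\leftrightarrow\tau\cup\{v\}$ — processed in decreasing order of $|\tau|$ — realizes the deletion as a chain of classical free-face collapses, hence a homotopy equivalence; (2) when $\gamma_i=\sigma_i$, a maximal face of cardinality at most $d$ is deleted, so reading the sequence backwards attaches a single cell of dimension at most $d-1$, and the gluing lemma (attaching a cell along an attaching map composed with a homotopy equivalence preserves homotopy type, valid here since all inclusions involved are CW cofibrations) propagates the low-dimensional model $Z_i$ through the reverse induction. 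The only cosmetic loose ends are the degenerate bookkeeping around the void/empty complex at the tail of the sequence and the final replacement of the CW complex $Z_1$ by a simplicial complex of the same dimension; both are standard and do not affect the validity of the argument. Your closing remark that the same matching can be packaged as a discrete Morse function with critical cells exactly the deleted maximal faces is also accurate and gives an equivalent route.
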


Recently, Bigdeli and Faridi gave a connection between  $d$-collapsibility and the chordal complexes; and proved that $d$-collapsibility is equivalent to the chordality of the Stanley-Reisner complexes of certain ideals \cite{Bigdeli2020}. For applications  regarding Helly-type theorems, see \cite{ AhroniHolzmanJiang2019, Kalai1984, KalaiMeshulam2005}. One of the consequences of  the topological colorful Helly
theorem \cite[Theorem 2.1]{KalaiMeshulam2005} is the following.

\begin{proposition}\cite[Theorem 1.1]{Kimlew2021}
	Let X be a $d$-collapsible simplicial complex on vertex set $V$, and let $X^c = \{\sigma \subseteq  V : \sigma \notin X \}$. Then, every
	collection of $d + 1$ sets in $X^c$ has a rainbow set belonging to $X^c$.
\end{proposition}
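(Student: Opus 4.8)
The plan is to obtain this statement as the contrapositive of the topological colorful Helly theorem of Kalai and Meshulam \cite[Theorem 2.1]{KalaiMeshulam2005}. Recall that a simplicial complex is $d$-\emph{Leray} if the reduced homology of each of its induced subcomplexes vanishes in all dimensions $\geq d$, and that Wegner's work \cite{Wegner1975} yields the implication \emph{$d$-collapsible} $\Rightarrow$ \emph{$d$-Leray}; this is where the hypothesis on $X$ enters, since every induced subcomplex of a $d$-collapsible complex is again $d$-collapsible and hence, by \Cref{prop:collapsibilitysubcomplex}, has vanishing homology above dimension $d-1$. I would first record the combinatorial form of the colorful Helly theorem adapted to this language: if $X$ is a $d$-Leray complex on $V$ and $V_1,\dots,V_{d+1}\subseteq V$ are color classes with the property that every rainbow transversal $\{v_1,\dots,v_{d+1}\}$ (with $v_i\in V_i$) is a face of $X$, then at least one color class $V_i$ is itself a face of $X$.

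With this in hand the deduction is immediate. Given a collection $F_1,\dots,F_{d+1}\in X^c$, I take the color classes to be $V_i:=F_i$. By assumption none of the $F_i$ is a face of $X$, so the conclusion of the colorful Helly theorem fails for these color classes. Contrapositively, its hypothesis must fail as well: there exists a rainbow transversal $R=\{v_1,\dots,v_{d+1}\}$ with $v_i\in F_i$ that is \emph{not} a face of $X$. Such an $R$ is precisely a rainbow set of the collection lying in $X^c$, which is what we wanted.

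The genuine content of the argument is the colorful Helly theorem itself, which I treat as a black box; the remainder is bookkeeping. The two points that require (routine) care are the reduction $d$-collapsible $\Rightarrow$ $d$-Leray noted above, and the matching of conventions for the word ``rainbow set.'' Concerning the latter, if the $F_i$ overlap then the chosen representatives $v_1,\dots,v_{d+1}$ need not be distinct, so the transversal $R$ may have fewer than $d+1$ elements; this causes no difficulty because $X^c$ is upward closed (a superset of a non-face of a simplicial complex is again a non-face), so $R$ still certifies a rainbow non-face, and if a set of full size $d+1$ is desired, any $(d+1)$-element superset assembled from $\bigcup_i F_i$ remains in $X^c$. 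The main potential obstacle, were one to prove the colorful input from scratch rather than cite it, would be establishing the colorful statement for $d$-collapsible complexes via a Leray-type nerve argument; since \cite{KalaiMeshulam2005} supplies exactly this, no further work is needed.
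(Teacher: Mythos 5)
Your derivation is correct and follows exactly the route the paper itself indicates: the paper gives no independent proof of this proposition, but states it as a consequence of the topological colorful Helly theorem of Kalai and Meshulam (citing Kim--Lew), which is precisely the contrapositive argument you spell out, with the $d$-collapsible $\Rightarrow$ $d$-Leray reduction and the bookkeeping about overlapping colour classes being the only (routine) points of care. Nothing further is needed.
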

In this paper we prove the following.

\begin{restatable}{thm}{collapsibilitytwo}
	\label{thm:collapsibility2}
	For $n \geq 3$, the collapsibility number of $\vr{\I_n}{2}$ is $4$.	
\end{restatable}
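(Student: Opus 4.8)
The statement splits into a lower bound (the collapsibility number is at least $4$) and an upper bound (the complex is $4$-collapsible), and I would treat these separately. For the lower bound the plan is to invoke Proposition~\ref{prop:collapsibilitysubcomplex}: a $d$-collapsible complex is homotopy equivalent to a complex of dimension $<d$. By the Adamaszek--Adams result quoted in the introduction, $\vr{\I_n}{2}$ is homotopy equivalent to a nonempty wedge of $3$-spheres for $n\ge 3$, so $\tilde H_3(\vr{\I_n}{2};\Z)\ne 0$. If $\vr{\I_n}{2}$ were $3$-collapsible it would be homotopy equivalent to a complex of dimension $\le 2$, forcing $\tilde H_3=0$, a contradiction. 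Hence it is not $3$-collapsible, and since $d$-collapsibility implies $c$-collapsibility for $c>d$, its collapsibility number is at least $4$.

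The bulk of the work is the upper bound. First I would classify the maximal faces: fixing a vertex as basepoint and translating it to $0$, a clique in the distance-$\le 2$ graph corresponds to a family of subsets of $[n]$ of size $\le 2$ with pairwise symmetric differences $\le 2$, and analyzing intersecting families of $2$-sets shows every maximal face is one of three types, namely a closed neighborhood $N[v]$ (size $n+1$), a $2$-face of the cube (a ``square'', size $4$), or the four even-distance vertices of a $3$-subcube (a ``tetrahedron'', size $4$). The decisive observation is that any complex of dimension $\le 3$ is automatically $4$-collapsible, since one may repeatedly delete a maximal face $\gamma$ by the elementary $4$-collapse with $\gamma$ equal to the whole face (always of size $\le 4$). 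Thus the squares and tetrahedra cost nothing, and the entire problem reduces to collapsing away the $n$-dimensional neighborhood simplices $N[v]$ (which have size $\ge 6$ once $n\ge 5$) using only faces of size $\le 4$.

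To dismantle these I would use the standard link--deletion principle for $d$-collapsibility: if $\mathrm{lk}_\Delta(v)$ is $(d-1)$-collapsible then $\Delta$ admits a sequence of $d$-collapses down to $\mathrm{del}_\Delta(v)$, so that if in addition $\mathrm{del}_\Delta(v)$ is $d$-collapsible then so is $\Delta$. The plan is to fix a linear order $v_1,\dots,v_{2^n}$ on the vertices and peel them off in turn, at step $i$ collapsing the star of $v_i$ inside the current complex $\Delta_i=\vr{\{v_i,\dots,v_{2^n}\}}{2}$ down to $\Delta_{i+1}$ by $4$-collapses; the terminal complex is a point, so this yields $4$-collapsibility provided every link $\mathrm{lk}_{\Delta_i}(v_i)$ is $3$-collapsible. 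Each such link is itself a Vietoris--Rips complex (with scale $2$) on the surviving vertices of the punctured ball $\{u:1\le d(u,v_i)\le 2\}$.

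The hard part will be establishing that all of these links are $3$-collapsible, which is precisely the assertion that the large simplex on the neighbors of $v_i$ can be taken apart using only faces of size $\le 3$ with the help of the distance-$2$ vertices. I expect to prove this by a secondary induction, choosing the vertex order (for instance by Hamming weight, or lexicographically) so that the surviving radius-$\le 2$ neighborhood of $v_i$ retains enough square/tetrahedron structure to feed the same link--deletion argument one dimension lower, with the maximal-face classification restricted to the link again reducing matters to pieces of size $\le 3$. The delicate point is that after several deletions the ambient vertex set is no longer a full subcube, so the clean classification of maximal faces no longer applies verbatim; verifying that one fixed order makes every link $3$-collapsible simultaneously is the crux, and is where the special combinatorics of the hypercube, rather than general metric-space reasoning, must be exploited.
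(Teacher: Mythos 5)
Your lower bound and your classification of the maximal faces into $N[v]$, squares, and tetrahedra $K_v^{i,j,k}$ agree with the paper (its \Cref{thm:maximal2}), but your upper bound takes a different route and, as written, has a genuine gap exactly where you flag it. You reduce $4$-collapsibility to the claim that, for some fixed vertex ordering, every link $\mathrm{lk}_{\Delta_i}(v_i)$ in every intermediate complex $\Delta_i$ is $3$-collapsible, and then say you ``expect'' to prove this by a secondary induction. That claim is the entire content of the theorem in your framework and is never established. The difficulty is real: the link of $v$ in the full complex contains the $(n-1)$-simplex $N(v)$ as a maximal face, so its $3$-collapsibility is a statement of the same nature and difficulty as the original one; and after deleting vertices the surviving radius-$2$ ball is no longer a subcube, so neither your maximal-face classification nor any obvious induction applies. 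You correctly identify this as ``the crux,'' but identifying the crux is not the same as resolving it, and no candidate ordering is even specified, let alone verified.

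The paper sidesteps this entirely by working with \Cref{thm:minimalexclusion} (the minimal exclusion sequence criterion of Matou\v{s}ek--Tancer and Lew) applied once to the whole complex: order the maximal simplices so that all the $N[v]$'s come first, and show $|M_{\prec}(\tau)|\leq 4$ for every face $\tau$. When the first maximal simplex containing $\tau$ is a square or a tetrahedron this is immediate from $|\sigma|=4$; when it is $N[v]$, the key combinatorial input is \Cref{lem:3neighbour2} (any maximal simplex containing three vertices of $N(v)$ is either $K_v^{i_0,j_0,k_0}$ or contains all of $N(v)$), which forces the exclusion sequence to pick up at most three vertices of $N(v)$ plus possibly $v$ itself. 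If you want to salvage your link--deletion approach, you would need to prove an analogue of that lemma robust enough to survive vertex deletions, or restrict deletions to an order (e.g.\ by Hamming weight) for which you can actually exhibit the $3$-collapses of each link; until one of these is done, the upper bound is not proved.
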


\begin{restatable}{thm}{collapsibilitythree}
	\label{thm:collapsibility}
	For $n \geq 4$, the collapsibility number  of $\vr{\I_n}{3}$ is $8$.
\end{restatable}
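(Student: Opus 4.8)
The plan is to handle the two inequalities separately: the lower bound $\geq 8$ follows formally from the homology computation, whereas the upper bound $\leq 8$ requires an explicit collapsing scheme, which is where essentially all the work lies.

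For the lower bound I would argue as follows. For every $n \geq 4$ the complex $\vr{\I_n}{3}$ has $\tilde{H}_7 \neq 0$: for $n = 4$ this holds because $\vr{\I_4}{3} \simeq S^7$, and for $n \geq 5$ it is \Cref{thm:main3}. If $\vr{\I_n}{3}$ were $7$-collapsible, then by \Cref{prop:collapsibilitysubcomplex} it would be homotopy equivalent to a complex of dimension at most $6$, whose $7$-th reduced homology necessarily vanishes --- a contradiction. Hence $\vr{\I_n}{3}$ is not $7$-collapsible, and its collapsibility number is at least $8$. (Note this mirrors the $r=2$ case of \Cref{thm:collapsibility2}, where nontriviality of $\tilde{H}_3$ forces collapsibility at least $4 = 2^2$, matching the pattern $2^r$.)

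For the upper bound I would induct on $n$ and collapse by removing vertices one at a time. The engine is the standard coning lemma: if $\lk_\Delta(v)$ is $(d-1)$-collapsible, then $\Delta$ admits a sequence of $d$-collapses onto $\del_\Delta(v)$, obtained by lifting a $(d-1)$-collapsing sequence $\eta_1, \eta_2, \dots$ of the link to the faces $\{v\}\cup\eta_1, \{v\}\cup\eta_2, \dots$ of the star of $v$, each of size at most $d$. Fixing a suitable linear order on the vertices of $\I_n$ and applying this lemma repeatedly with $d = 8$, I would peel off vertices (for instance all those with last coordinate $1$) until only a full subcomplex isomorphic to $\vr{\I_{n-1}}{3}$ remains, to which the inductive hypothesis applies. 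The base case $n = 4$ is clean: $\vr{\I_4}{3}$ is exactly the boundary of the $8$-dimensional cross-polytope $(S^0)^{*8} \cong S^7$, whose minimal non-faces are the $8$ antipodal pairs; removing a single maximal $8$-element face (its unique maximal face being itself) yields a triangulated $7$-ball, which collapses to a point, so $\vr{\I_4}{3}$ is $8$-collapsible.

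The main obstacle is establishing that every link arising in this procedure is $7$-collapsible. The link of a removed vertex $v$ is the full subcomplex of the current complex on the punctured Hamming ball $\{u : 0 < d(u,v) \leq 3\}$, i.e.\ again a scale-$3$ Vietoris--Rips complex but on a ball rather than a cube. I expect a dichotomy: for most vertices this link has a dominating vertex (one adjacent to all others in the link), hence is a cone and is collapsible; the remaining ``homological core'' links should be cross-polytopal spheres $(S^0)^{*k} \cong S^{k-1}$ with $k \leq 7$, whose collapsibility number is exactly $k \leq 7$ by the same facet-removal argument as in the base case, and so are $7$-collapsible. Converting this heuristic into a uniform proof --- choosing the vertex order so that \emph{every} link that occurs is provably $7$-collapsible --- is the hard part, and I anticipate it will require a strengthened induction hypothesis that simultaneously controls the collapsibility of these neighborhood complexes, most naturally reusing the very decomposition of $\I_n$ into two copies of $\I_{n-1}$ that underlies the proof of \Cref{thm:main3}. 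In fact, if that homotopy-type argument is realized by an explicit sequence of collapses, one could read off $8$-collapsibility directly by checking that the faces removed never exceed size $8$.
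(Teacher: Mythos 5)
Your lower bound is correct and is essentially the paper's argument: nontriviality of $\tilde{H}_7$ (via $\vr{\I_4}{3}\simeq S^7$ and a retraction of $\Delta_n$ onto the Rips complex of a $4$-dimensional subcube, \Cref{lem:retraction}) together with \Cref{prop:collapsibilitysubcomplex} rules out $7$-collapsibility.

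The upper bound, however, has a genuine gap, and you have located it yourself: the entire content of the theorem is the claim that every link arising in your vertex-deletion scheme is $7$-collapsible, and this is asserted, not proved. Worse, the dichotomy you propose fails already for the very first vertex you would remove when $n\geq 5$. The link of $v$ in $\Delta_n$ is the Rips complex at scale $3$ of the punctured ball $\{u: 1\leq d(u,v)\leq 3\}$, and no vertex of that ball dominates it: for any candidate $w$ with $d(v,w)=k\in\{1,2,3\}$ one can pick $u$ at distance $3-k+2>3$ from $w$ inside the ball (e.g.\ $w=v^1$ and $u=v^{2,3,4}$, or $w=v^{1,2}$ and $u=v^{3,4,5}$). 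So the link is neither a cone nor a cross-polytopal sphere (its vertex count $n+\binom n2+\binom n3$ is odd for $n=5$), and determining its collapsibility number is essentially as hard as the original problem. Since the links also mutate as vertices are deleted, the "strengthened induction hypothesis" you anticipate is the whole theorem, not a technical refinement. The coning lemma you invoke is fine as an engine, but without the link analysis the argument does not get off the ground beyond $n=4$.

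For comparison, the paper avoids links entirely. It first classifies the facets of $\Delta_n$ (\Cref{thm:maximalsimplex}): they have sizes $8$, $n+4$, or $2n$, the large ones being $N(v)\cup K_v^{i,j,k}$ and $N(v)\cup N(w)$ with $v\sim w$. It then performs explicit elementary $8$-collapses to destroy all facets of the middle type (\Cref{lem:removingn3}), and finishes with the minimal-exclusion-sequence criterion (\Cref{thm:minimalexclusion}), ordering the facets $N(v)\cup N(w)$ first and using the fact that any facet meeting $N(v)$ in at least $4$ vertices must contain all of $N(v)$ (\Cref{lem:4neighbour}) to bound the exclusion sets by $4+4=8$ --- exactly the mechanism used for the scale-$2$ case in \Cref{thm:collapsibility2}. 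If you want to salvage your approach, you would need an independent proof that the punctured-ball links (and their successive deletions) are $7$-collapsible; nothing in the paper supplies that.
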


\vspace*{0.3cm}

\noindent{\bf Flow of the paper:} In the following Section, we list out various definitions on graph theory and simplicial complexes that are used in this paper. We also fix  a few notations, which we use throughout this paper.  In \Cref{sec:vr2}, we consider the complex $\vr{\I_n}{2}$ and prove \Cref{thm:collapsibility2}. \Cref{sec:vr3} is devoted to the complex $\vr{\I_n}{3}$ and divided into three subsections. In \Cref{subsec:maximal}, we give a characterization of maximal simplices of $\vr{\I_n}{3}$. In \Cref{subsec:collapsibility}, we prove \Cref{thm:collapsibility}. Finally in \Cref{subsec:homology}, we prove \Cref{thm:main3}. In the last section, we posed  a few conjectures and a question that arise
naturally from the work done in this paper.

\section{Preliminaries and Notations}\label{section:prelieminaries}

A  graph $G$ is a  pair $(V(G), E(G))$,  where $V(G)$  is the set of vertices
of $G$  and $E(G) \subseteq {V(G) \choose 2}$
denotes the set of edges.
If $(x, y) \in E(G)$, it is also denoted by $x \sim y$ and we say that $x$ is adjacent to $y$. 
A {\it subgraph} $H$ of $G$ is a graph with $V(H) \subseteq V(G)$ and $E(H) \subseteq E(G)$.
For a subset $U \subseteq V(G)$, the induced subgraph $G[U]$ is the subgraph whose set of vertices  is $V(G[U]) = U$
and whose set of edges is
$E(G[U]) = \{(a, b) \in E(G) \ | \ a, b \in U\}$.

A {\it graph homomorphism} from  $G$ to $H$ is a function
$\phi: V(G) \to V(H)$ such that, $(v,w) \in E(G) \implies (\phi(v),\phi(w)) \in E(H).$ A graph homomorphism $f$ is called an {\it isomorphism} if $f$ is bijective and $f^{-1}$ is also a graph homomorphism. Two graphs are called {\it isomorphic},  if there exists an isomorphism between them. If $G$ and $H$ are isomorphic, we write $G \cong H$.

Let $G$ be a graph and $v$ be a vertex of $G$. The {\it
	neighbourhood  of $v$ }is defined as $N_G(v)=\{ w \in V(G) \ |  \
(v,w) \in E(G)\}$ and the {\it closed neighbourhood} $N_G[v] = N_G(v) \cup \{v\}$. 

Let $x$ and $y$ be two distinct vertices of $G$. A {\it $xy$-path} is a sequence $x v_0 \ldots v_n y$ of vertices of $G$ such that $x \sim v_0, v_n \sim y$ and $v_i \sim v_{i+1}$ for all $0 \leq i\leq n-1$. The {\it length} of a $xy$-path is the number of edges  appearing in the path. The {\it distance} between $x$ and $y$ is the length of a shortest path (with respect to length) among all $xy$-paths and it is denoted by $d(x, y)$. Clearly, if $(x, y) \in E(G)$, then $d(x, y) = 1$. By convention, $d(v, v) = 0$ for all $v \in V(G)$.

\begin{defn}
	A (finite) abstract simplicial complex X is a collection of
	finite sets such that if $\tau \in X$ and $\sigma \subset \tau$,
	then $\sigma \in X$. 
\end{defn}
The elements  of $X$ are called {\it simplices} or {\it faces}
of $X$. The  {\it dimension} of a simplex $\sigma$ is equal to $|\sigma| - 1$. 
The dimension of an abstract  simplicial complex is the maximum of the dimensions of its simplices. The $0$-dimensional
simplices are called {\it vertices} of $X$. If $\sigma \subset \tau$, we say that
$\sigma$ is a face of $\tau$.  If a simplex has dimension $k$, it is said to
be $k${\it -dimensional} or   $k$-{\it simplex}.  The {\it boundary} of a $k$-simplex
$\sigma $ is the simplicial complex, consisting of  all  faces of $\sigma$
of dimension $\leq k-1$ and it is denoted by $Bd(\sigma).$
A simplex which is not a face of any other
simplex is called a  {\it maximal simplex} or {\it facet}. The set of maximal simplices of $X$ is denoted by $M(X)$. 

The {\it join} of two simplicial complexes $\K_1$ and $\K_2$, denoted as $\K_1 \ast \K_2$, is a simplicial complex whose simplices are disjoint union of simplices of $\K_1$ and $\K_2$. Let $\Delta^S$ denotes a $(|S|-1)$-dimensional simplex with vertex set $S$. The \emph{cone} on $\K$ with apex $a$, denoted as $C_a(\K)$, is defined as
$$ C_a (\K ) := \K \ast \Delta^{\{a\}}.$$

In this article, we consider any simplicial complex as a topological space, namely its  geometric realization. For the definition of geometric realization, we refer to  book \cite{Kozlov08} by  Kozlov. For terminologies of algebraic topology used in this article, we  refer to \cite{Hatcher02}.

Let  $X$ be a simplicial complex and $\tau, \sigma \in X$ such that
$\sigma \subsetneq \tau$ and  $\tau$ is the only maximal simplex in $X$ that contains $\sigma$.
A  {\it simplicial collapse} of $X$ is the simplicial complex $Y$ obtained from $X$ by
removing all those simplices $\gamma$  of $X$ such that
$\sigma \subseteq \gamma \subseteq \tau$. Here, $\sigma$ is called a {\it free face} of
$\tau$ and $(\sigma, \tau)$ is called a {\it collapsible pair}. We denote this collapse
by $X \searrow Y$. In particular, if 
$X \searrow Y$, then $X \simeq Y$.

\begin{defn}\label{defn:hypercube}
	For a positive integer $n$, the  {\it $n$-dimensional  Hypercube graph}, denoted by  $\I_n$, is a graph whose vertex set $V(\I_n)= \{x_1 \ldots x_n : x_i \in \{0, 1\} \ \forall \ 1 \leq i \leq n\}$ and  any two vertices $x_1 \ldots x_n$ and $y_1 \ldots y_n$ are adjacent  if and only if $\sum\limits_{i=1}^n |x_i - y_i| = 1$, i.e.,  they are differ at exactly in one position (see \Cref{Figure:hypercube}).
\end{defn}

\begin{figure} [h]
	\begin{subfigure}[]{0.4\textwidth}
		\centering
		\vspace{0.4 cm}
		\begin{tikzpicture}
			[scale=0.5, vertices/.style={draw, fill=black, circle, inner
				sep=0.5pt}]
			\node[vertices, label=left:{00}] (1) at (0,0) {};
			\node[vertices, label=left:{01}] (2) at (0,4) {};
			\node[vertices,label=right:{10}] (3) at (4,0) {};
			\node[vertices,label=right:{11}] (4) at (4,4) {};
			
			\foreach \to/\from in
			{1/2,1/3, 2/4, 4/3} \draw [-] (\to)--(\from);
			
		\end{tikzpicture}
		\vspace{0.4cm}
		\caption{$\I_2$} \label{a}
	\end{subfigure}
	\begin{subfigure}[]{0.55\textwidth}
		\centering
		\begin{tikzpicture}
			[scale=0.6, vertices/.style={draw, fill=black, circle, inner
				sep=0.5pt}]
			\node[vertices, label=left:{011}] (1) at (0,4.5) {};
			\node[vertices, label=left:{010}] (2) at (0,1.5) {};
			\node[vertices,label=right:{111}] (3) at (3,4.5) {};
			\node[vertices,label=left:{001}] (4) at (1.5,3) {};
			\node[vertices,label=right:{101}] (5) at (4.5,3) {};
			\node[vertices,label=right:{100}] (6) at (4.5,0) {};
			\node[vertices,label=left:{000}] (7) at (1.5,0) {};
			\node[vertices,label=right:{110}] (8) at (3,1.5) {};
			\foreach \to/\from in
			{1/2,1/3,1/4, 2/7, 2/8, 4/7, 4/5,3/5, 3/8, 5/6, 6/7, 6/8} \draw [-] (\to)--(\from);
			
		\end{tikzpicture}
		\caption{$\I_3$} \label{b}
	\end{subfigure}
	\caption{}\label{Figure:hypercube}
\end{figure}

We now fix a few notations, which we use throughout this paper. For a positive integer  $n$, we denote the set $\{1, \ldots, n\}$ by $[n]$. Let  $v = v_1 \ldots v_n \in V(\I_n)$. For any $i \in [n]$, we let $v(i)= v_i$. For $ \{i_1, i_2, \ldots, i_k\}\subseteq [n]$, we let 
$v^{i_1, \ldots, i_k} \in V(\I_n)$  be defined by

$$
v^{i_1, \ldots, i_k}(j)  = \begin{cases}
	\ v(j) & \text{if} \  j \notin \{i_1, \ldots, i_k\},\\
	\ \{0, 1\} \setminus \{v(j)\} & \text{if} \   j \in \{i_1, \ldots, i_k\}. \\
\end{cases}
$$
Observe that for any two vertices $v, w \in V(\I_n)$, $d(v, w) = \sum\limits_{i=1}^n |v(i)-w(i)|$ and $d(v, w) = k$ if and only if $w = v^{i_1, \ldots, i_k}$ for some $i_1, \ldots, i_k \in [n]$.  Clearly, $N_{\I_n}(v) = \{v^i : i \in [n]\}$.  For $i, j, k \in [n]$, we let $K_{v}^{i, j, k} := \{v, v^{i, j}, v^{j, k}, v^{i, k}\}$. For the simplicity of notation, we write $N(v)$ and $N[v]$ for the sets $N_{\I_n}(v)$ and $N_{\I_n}[v]$ respectively.  

\begin{rmk} The vertices of $\I_n$ can be consider as  subsets of $[n]$, where the  element $v_1 \ldots v_n \in V(\I_n)$ correspond to the set $\{i: v_i=1\}$. Hence the distance  between any two vertices of $\I_n$ is same as the cardinality of the symmetric difference of corresponding sets.
	
	Using this notations, any  $\sigma \in \vr{\I_n}{r}$ is a set where the symmetric difference  between any two elements in $\sigma$ is  at most $r$.	  
	
\end{rmk}

\section{The complex  $\vr{\I_n}{2}$} \label{sec:vr2}
In this section, we prove \Cref{thm:collapsibility2}. We first characterise the maximal simplices of $\vr{\I_n}{2}$. 

\begin{lemma} \label{thm:maximal2}
	Let  $n \geq 3$  and $\sigma $ be a maximal simplex of $\vr{\I_n}{2}$. Then one of the following is true:
	\begin{itemize}
		\item[(i)] $\sigma = N[v]$ for some $v \in V(\I_n)$.
		\item[(ii)] $\sigma = \{v, v^{i_0}, v^{j_0}, v^{i_0, j_0}\}$ for some $v \in V(\I_n)$ and $i_0, j_0 \in [n]$.
		\item[(ii)] $\sigma = K_v^{i_0, j_0, k_0}$ for some $v \in V(\I_n)$ and $i_0, j_0, k_0 \in [n]$.
		
	\end{itemize}
\end{lemma}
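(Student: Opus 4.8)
The plan is to exploit the vertex-transitivity of the hypercube to normalize the problem, and then to reduce the geometric ``diameter $\le 2$'' condition to a purely combinatorial statement about intersecting families of $2$-element sets. First I would observe that for any fixed $w \in V(\I_n)$ the coordinatewise XOR map $x \mapsto x \oplus w$ is a bijection of $V(\I_n)$ preserving the distance $d$, hence a simplicial automorphism of $\vr{\I_n}{2}$. Given a maximal simplex $\sigma$, I pick some $w \in \sigma$ and apply this automorphism, so it suffices to classify the maximal simplices containing the all-zeros vertex $\mathbf{0}$ and then translate the answer back by $\oplus w$; this translation sends $N[\mathbf{0}] \mapsto N[w]$, the square $\{\mathbf{0}, \mathbf{0}^{i}, \mathbf{0}^{j}, \mathbf{0}^{i,j}\} \mapsto \{w, w^{i}, w^{j}, w^{i,j}\}$, and $K_{\mathbf{0}}^{i,j,k} \mapsto K_{w}^{i,j,k}$, so each of the three listed forms is preserved.

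Since $\mathbf{0} \in \sigma$, every $u \in \sigma$ has $d(\mathbf{0}, u) \le 2$, i.e.\ Hamming weight at most $2$, so $u \in \{\mathbf{0}\} \cup \{\mathbf{0}^{i}\} \cup \{\mathbf{0}^{i,j}\}$. I encode $\sigma$ by $A = \{i : \mathbf{0}^{i} \in \sigma\}$ and the family $B = \{\{i,j\} : \mathbf{0}^{i,j} \in \sigma\}$. A direct distance computation converts the simplex condition into two requirements: (a) any two members of $B$ intersect, because $d(\mathbf{0}^{i,j}, \mathbf{0}^{k,l}) = |\{i,j\} \triangle \{k,l\}| \le 2$ iff $\{i,j\} \cap \{k,l\} \neq \emptyset$; and (b) $A \subseteq \bigcap_{P \in B} P$, because $d(\mathbf{0}^{i}, \mathbf{0}^{k,l}) \le 2$ iff $i \in \{k,l\}$ (two singletons, and $\mathbf{0}$ itself, impose no constraint). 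Since $\mathbf{0}$ forces any addable vertex to have weight $\le 2$, maximality of $\sigma$ is equivalent to $(A,B)$ being maximal subject to (a) and (b).

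The combinatorial heart is the classical dichotomy: a pairwise-intersecting family $B$ of $2$-element subsets of $[n]$ is either a \emph{star} (all members share a common coordinate) or a \emph{triangle} $\{\{i,j\},\{j,k\},\{i,k\}\}$, which I would prove in one line (if two edges meet at $a$ and a third avoids $a$, the third must contain both remaining endpoints, forcing a triangle, which admits no further edge). I then split by the shape of $B$ and invoke (b):
\begin{itemize}
\item $B = \emptyset$: (b) is vacuous, so maximality gives $A = [n]$ and $\sigma = N[\mathbf{0}]$ (type (i)); here $n \ge 3$ prevents appending any pair.
\item $B = \{\{i_0,j_0\}\}$: then $A \subseteq \{i_0,j_0\}$, and maximality gives $A = \{i_0,j_0\}$, i.e.\ $\sigma = \{\mathbf{0}, \mathbf{0}^{i_0}, \mathbf{0}^{j_0}, \mathbf{0}^{i_0,j_0}\}$ (type (ii)).
\item $B$ a star through $j$ with $|B| \ge 2$: then $\bigcap B = \{j\}$, so $A = \{j\}$ and maximality fills $B$ with all pairs $\{j,m\}$, giving $\sigma = \{\mathbf{0}, \mathbf{0}^{j}\} \cup \{\mathbf{0}^{j,m} : m \ne j\} = N[\mathbf{0}^{j}]$ (type (i)).
\item $B$ a triangle: then $\bigcap B = \emptyset$, forcing $A = \emptyset$, and no further edge can be added, so $\sigma = K_{\mathbf{0}}^{i,j,k}$ (type (iii)).
\end{itemize}
Translating back by $\oplus w$ yields the three stated forms.

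I expect the main obstacle to be not any single step but the bookkeeping in the maximality analysis: one must check in each case that no vertex can be added, and in particular recognize that the interplay of $A$ and $B$ in condition (b) is exactly what separates the star-with-$\ge 2$-edges case (which collapses to a neighborhood $N[\mathbf{0}^{j}]$, since $\mathbf{0}^{j} \in \sigma$ blocks completing the star to a triangle because $d(\mathbf{0}^{j}, \mathbf{0}^{k,l}) = 3$ when $j \notin \{k,l\}$) from the genuinely distinct triangle case. The intersecting-family dichotomy is the clean combinatorial core, and the hypothesis $n \ge 3$ is needed to exclude degenerate extensions.
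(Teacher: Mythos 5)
Your proof is correct, and it takes a genuinely different route from the paper's. The paper argues directly on $\sigma$ with no normalization: it splits into cases according to whether some vertex of $\sigma$ has a neighbour inside $\sigma$ (and, within that, whether some vertex has exactly one such neighbour), and in each case it pins down $\sigma$ element by element through explicit distance chases of the form ``since $v^{l_0}\notin\sigma$ and $\sigma$ is maximal, there is $x\in\sigma$ with $d(x,v^{l_0})\geq 3$, hence $x=v^{i,j}$ with \dots''. You instead use the isometry $x\mapsto x\oplus w$ to place a chosen vertex of $\sigma$ at $\mathbf{0}$, after which the diameter-$\leq 2$ condition becomes precisely: the weight-$2$ part $B$ of $\sigma$ is a pairwise-intersecting family of $2$-subsets of $[n]$ and the weight-$1$ part $A$ is contained in $\bigcap_{P\in B}P$; the classification then falls out of the standard star-versus-triangle dichotomy for intersecting $2$-uniform families. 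Your distance computations ($d(\mathbf{0}^{i,j},\mathbf{0}^{k,l})=|\{i,j\}\,\triangle\,\{k,l\}|$, $d(\mathbf{0}^{i},\mathbf{0}^{k,l})\leq 2$ iff $i\in\{k,l\}$) are correct, the reduction of maximality of $\sigma$ to maximality of the pair $(A,B)$ is legitimate because $\mathbf{0}\in\sigma$ already forces any addable vertex to have weight at most $2$, the four subcases ($B$ empty, a single pair, a star with at least two edges, a triangle) are exhaustive by the dichotomy, and the maximality bookkeeping in each subcase checks out, including the identification of the completed star through $j$ with $N[\mathbf{0}^{j}]$ and the observation that $n\geq 3$ is exactly what prevents $N[\mathbf{0}]$ from absorbing a weight-$2$ vertex. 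What your approach buys is brevity and a visible combinatorial core (type (iii) is literally the triangle family, the two instances of type (i) are the empty family and the full star, type (ii) is the singleton family), at the cost of a small amount of setup for the automorphism and the translation of the normal forms back by $\oplus\, w$; the paper's approach avoids that setup but pays for it with a much longer ad hoc case analysis.
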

\begin{proof}
	We consider the following cases.

	{\it \bf Case 1.} There exists a $w \in \sigma$ such that $N(w) \cap \sigma \neq  \emptyset$. 
	
	Let us first assume that there exists a vertex $w \in \sigma$ such that $N(w) \cap \sigma = \{v\}$. Since $w \in N(v)$,  
	$w = v^{p_0}$ for some $p_0 \in [n]$. We show that $\sigma = N[v]$. Suppose there exists $l_0 \in [n]$ such that $v^{l_0} \notin \sigma$. Since $\sigma$ is maximal, there exists  $x\in\sigma$ such that $d(x, v^{l_0}) \geq 3$. Further, since $v \in \sigma$, $d(v, x) \leq 2$. For any $t \in [n]\setminus \{l_0\}$, since $d(v^{l_0}, v^{t}) = 2$, we see that $x \neq v^t$ and therefore $d(v, x) = 2$. Hence $x = v^{i, j}$ for some $i, j \in [n]$. Since $N(w) \cap \sigma = \{v\}$, $p_0 \notin \{i, j\}$. But then $d(x, w) = 3$, a contradiction.  Thus $N(v)\subseteq \sigma$. Since $v \in \sigma$, we see that $N[v] \subseteq \sigma$. 	Suppose  there exists  $y \in \sigma$ such that $y \notin N[v]$. Then $d(v, y) = 2$ and therefore $y  = v^{s, t}$ for some $s, t\in [n]$. Choose $k \in [n] \setminus \{s, t\}$. Then $d(v^{k}, y) = 3$, a contradiction as $v^{k} \in \sigma$. Hence $\sigma = N[v]$. Thus $\sigma$ is of the type $(i)$.

	Now assume that $|N(w) \cap \sigma| \geq 2$ for all $w \in \sigma$. Let $u \in \sigma$. There exists $i_0, j_0 \in [n]$ such that $u^{i_0}, u^{j_0} \in \sigma.$ Thus $\{u, u^{i_0}, u^{j_0}\} \subseteq \sigma$. Since $|N(u^{i_0}) \cap \sigma | \geq 2$ and $u^{i_0} \not\sim u^{j_0}$, there exists  $ z \in \sigma \setminus \{u\}$ such that $z\sim u^{i_0}$.  Then $z = u^{i_0, k}$ for some $k \in [n] \setminus \{i_0\}$. Since $u^{j_0} \in \sigma$, $d(u^{j_0}, y) \leq  2$, thereby implying that $k = j_0$.  Thus $\{u, u^{i_0}, u^{j_0}, u^{i_0, j_0}\} \subseteq \sigma$. 	Suppose  there exists $q \in \sigma \setminus \{u, u^{i_0}, u^{j_0}, u^{i_0, j_0}\}$. If $q \sim u$, then $q = u^{i}$ for some $i \in [n]\setminus \{i_0, j_0\}$. Here $d(q, u^{i_0, j_0}) = 3$, a contradiction. Hence $q \not\sim u$, {\it i.e.}, $d(u, q) = 2$.  Then $q = v^{j, k}$ for some $j, k \in [n]$. If $\{i_0, j_0\} \cap \{j, k\} = \emptyset$, then $d(u^{i_0,j_0}, q) = 4$, a contradiction. Hence $\{i_0, j_0\} \cap \{j, k\} \neq \emptyset$. Without loss of generality we assume that $i_0 \in \{j, k\}$. In this case $d(q, u^{j_0}) = 3$, a contradiction. Thus $\sigma = \{u, u^{i_0}, u^{j_0}, u^{i_0, j_0}\} $. Hence $\sigma$ is of the type $(ii)$.
	
	{\it \bf Case 2.} $N(v) \cap \sigma = \emptyset$ for all $v \in \sigma$. 
	
	Let $v \in \sigma$. Clearly, $\{v\}$ is not a maximal simplex and therefore there exists  $x \in \sigma,x \neq v$. Since $N(v) \cap \sigma = \emptyset$ and $d(v, x) \leq 2$, we see that $d(v,x ) = 2$. There exist   $i_0, j_0 \in [n]$ such that $x= v^{i_0, j_0}$. Hence $\{v, v^{i_0, j_0}\} \subseteq \sigma$. For any $t \in [n] \setminus \{i_0, j_0\}$, since  $d(v^{i_0,t }, v) = 2 = d(v^{i_0, t}, v^{i_0, j_0})$, we see that  $\{v, v^{i_0, j_0}, v^{i_0, t}\} \in\vr{\I_n}{2} $. Thus $\{v, v^{i_0, j_0}\}$ is  not a maximal simplex and therefore there exists $y \in \sigma \setminus \{v, v^{i_0, j_0}\}$. Clearly, $d(v, y) = 2$.  There exist $ i, j \in [n]$ such that $y = v^{i, j}$. If $\{i, j\} \cap \{i_0, j_0\} = \emptyset $, then $d(y, v^{i_0, j_0}) \geq 3$, a contradiction. Hence $\{i, j\} \cap \{i_0, j_0\} \neq \emptyset$. Without loss of generality assume that $i = i_0$. Thus $\{v, v^{i_0, j_0}, v^{i_0, j}\} \subseteq \sigma.$  Since $N(v) \cap \sigma = \emptyset$, $v^{i_0} \notin \sigma$. Further, since $\sigma$ is maximal, there exists  $z \in \sigma$ such that $d(z, v^{i_0}) \geq 3$. Clearly $d(v, z) = 2$  and therefore $z = v^{k,l}$ for some $k, l \in [n]$. Since $d(z, v^{i_0}) \geq 3, i_0 \notin \{k, l\}$. Using the fact that $d(z, v^{i_0, j_0}) = 2 = d(z, v^{i_0, j})$, we conclude that $\{k, l\} = \{j_0, j\}$. Thus $\{v, v^{i_0, j_0}, v^{i_0, j}, v^{j_0, j}\} \subseteq \sigma.$  Suppose there exists a $w \in \sigma $ such that $w \notin \{v, v^{i_0, j_0}, v^{i_0, j}, v^{j_0, j}\}$.  Here, $d(v, w) = 2$ and therefore  $w = v^{s, t}$ for some $s, t \in [n]$. Since $d(w, v^{i_0, j_0}) = 2$, $\{i_0, j_0\} \cap \{s, t\} \neq \emptyset$. Further, $d(w, v^{i_0, j}) = 2$ implies  that $\{i_0, j\} \cap \{s, t\} \neq \emptyset$ and $d(w, v^{j_0, j}) = 2$ implies  that  $\{ j_0, j\} \cap \{s, t\} \neq \emptyset$, which is not possible. Hence $\sigma = \{v, v^{i_0, j_0}, v^{i_0, j}, v^{j_0, j}\} = K_v^{i_0, j_0, j}$. Thus  $\sigma$ is of the type $(iii)$.
\end{proof}

\begin{lemma}\label{lem:3neighbour2}
	
	Let $n \geq 3$ and $\sigma \in \vr{\I_n}{2}$ be a maximal simplex. If for some $v$, $|N(v) \cap \sigma| \geq 3$, then either $\sigma = K_v^{i_0, j_0, k_0}$ for some $i_0, j_0, k_0 \in [n] $ or $ N(v)\subseteq \sigma$.
\end{lemma}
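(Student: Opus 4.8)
The plan is to read the first alternative ``$\sigma = K_v^{i_0,j_0,k_0}$'' as the assertion that $\sigma$ is a maximal simplex of type $(iii)$ in \Cref{thm:maximal2}, because the vertex $v$ supplying the three neighbours is in general \emph{not} the centre of the resulting $K$-simplex: the four vertices of $K_u^{i_0,j_0,k_0}$ are pairwise at distance $2$, so each of them has no neighbour inside $\sigma$, while the three neighbours of $v$ lying in $\sigma$ are three of these four vertices. With this reading I would deduce the statement directly from the classification in \Cref{thm:maximal2}. So first fix distinct $a,b,c \in [n]$ with $v^a, v^b, v^c \in \sigma$, and write $\sigma$ as one of the three types $(i)$--$(iii)$.

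The first step is to rule out type $(ii)$. If $\sigma = \{u, u^{i_0}, u^{j_0}, u^{i_0,j_0}\}$, then the subgraph of $\I_n$ induced on $\sigma$ is a $4$-cycle, so no vertex of $\sigma$ has more than two neighbours in $\sigma$. Moreover no vertex $w \in V(\I_n)$ can be adjacent to three vertices of this $4$-cycle: any three of its four vertices contain an adjacent pair, and since $\I_n$ is bipartite it is triangle-free, so two adjacent vertices have no common neighbour. This contradicts $|N(v) \cap \sigma| \geq 3$, so type $(ii)$ cannot occur.

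Next I would treat type $(i)$. If $\sigma = N[u]$, I claim the centre $u$ is the only vertex of $\I_n$ with at least three neighbours in $\sigma$: a non-central vertex $u^l \in \sigma$ satisfies $N(u^l) \cap \sigma = \{u\}$ (its other neighbours $u^{l,m}$ have distance $2$ from $u$), and an external vertex $x$ with $d(u,x) \geq 2$ meets $N[u]$ in at most two points (exactly when $x = u^{p,q}$, namely $u^p, u^q$). Hence $|N(v)\cap\sigma|\geq 3$ forces $v = u$, and then $N(v) = N(u) \subseteq N[u] = \sigma$, which is the second alternative. If instead $\sigma = K_u^{i_0,j_0,k_0}$ is of type $(iii)$, we are already in the first alternative, and the proof is complete.

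Finally I would record a self-contained route that avoids \Cref{thm:maximal2} and isolates where the content lies. If $v \in \sigma$, then for any $l$ with $v^l \notin \sigma$ maximality yields $x \in \sigma$ with $d(x, v^l) \geq 3$; since $d(v,x)\leq 2$ this gives $x = v^{p,q}$ with $l \notin \{p,q\}$, and comparing with $v^a, v^b, v^c \in \sigma$ forces $\{a,b,c\} \subseteq \{p,q\}$, which is impossible, so $N(v) \subseteq \sigma$. If $v \notin \sigma$, maximality gives $x \in \sigma$ with $d(v,x)\geq 3$; the triangle inequality through $v^a$ pins $d(v,x)=3$, and being within distance $2$ of each of $v^a,v^b,v^c$ forces $x = v^{a,b,c}$, whence $\sigma \supseteq \{v^a,v^b,v^c,v^{a,b,c}\} = K_{v^a}^{a,b,c}$; a short weight count then shows no further vertex can be adjoined, so $\sigma = K_{v^a}^{a,b,c}$. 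The main obstacle is precisely this last case: extracting the fourth vertex $v^{a,b,c}$ from maximality and checking that $\sigma$ contains nothing else, together with the bookkeeping needed to reconcile the hypothesis vertex $v$ with the centre of the $K$-simplex.
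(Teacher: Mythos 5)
Your proposal is correct, and its primary route is genuinely different from the paper's. The paper proves the lemma directly: assuming $N(v)\not\subseteq\sigma$ it picks $v^{l_0}\notin\sigma$, extracts $x\in\sigma$ with $d(x,v^{l_0})\geq 3$, shows $2\leq d(v,x)\leq 3$, and then the case $d(v,x)=3$ forces $x=v^{i_0,j_0,k_0}$ and $\sigma=\{v^{i_0},v^{j_0},v^{k_0},v^{i_0,j_0,k_0}\}$ while the case $d(v,x)=2$ is contradictory. You instead deduce the statement from the classification in \Cref{thm:maximal2}, ruling out type $(ii)$ by triangle-freeness of the $4$-cycle and showing that in type $(i)$ the hypothesis pins $v$ to the centre; this is shorter and cleanly isolates why the dichotomy holds, at the price of leaning on the full classification (which is available, since \Cref{thm:maximal2} precedes the lemma). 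Your self-contained second route, organized by whether $v\in\sigma$, is essentially the paper's argument in a different order: your $v\in\sigma$ branch reproduces the paper's $d(v,x)=2$ contradiction, and your $v\notin\sigma$ branch reaches $x=v^{a,b,c}$ by applying maximality to $v$ itself rather than to a missing neighbour $v^{l_0}$. Two points in your favour: you correctly flag that the first alternative must be read as ``$\sigma$ is of type $(iii)$,'' since $K_v^{i_0,j_0,k_0}=\{v,v^{i_0,j_0},v^{i_0,k_0},v^{j_0,k_0}\}$ contains no neighbour of $v$ and the set actually produced is $\{v^{i_0},v^{j_0},v^{k_0},v^{i_0,j_0,k_0}\}=K_{v^{i_0}}^{i_0,j_0,k_0}$, a slip the paper itself makes when it writes this set as $K_v^{i_0,j_0,k_0}$ (harmless in the application, which only uses that $\sigma$ is not of type $(i)$); and you explicitly note the final verification that no fifth vertex lies within distance $2$ of all of $v^{a},v^{b},v^{c},v^{a,b,c}$, a check the paper's ``Thus $\sigma=\dots$'' leaves implicit. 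Do carry out that short weight count in a final write-up, but it is routine and your sketch of it is sound.
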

\begin{proof}
	Let  $|N(v) \cap \sigma| \geq 3$.  If $n = 3$, then $|N(v)| = 3$ and therefore $N(v) \subseteq \sigma$. So assume that $n \geq 4$. Suppose $N(v) \not\subseteq \sigma$. Then there exists $l_0\in [n]$ such that 
	$v^{l_0} \notin \sigma$. Since $|N(v) \cap \sigma | \geq 3 $, there exist $i_0, j_0, k_0\in [n]$ such that $\{v^{i_0}, v^{j_0}, v^{k_0}\} \subseteq \sigma$.   Clearly, $l_0 \notin \{ i_0, j_0, k_0\}$. Since $v^{l_0} \notin \sigma$ and $\sigma$ is a maximal simplex,  there exists $x \in \sigma $ such that $d(x, v^{l_0}) \geq 3$. Observe that for any vertex  $u$, if $d(v, u) =1 $, then $d(u, v^{l_0}) \leq 2$. Hence $d(v, x) \geq  2$. 	If $d(v, x) \geq 4$, then $d(v^{i_0}, x) \geq 3$, a contradiction as $v^{i_0} \in \sigma$.  Hence $d(v, x) \leq 3$. If  $d(v, x) = 3$, then $x =v^{i, j, k}$ for some $i, j, k \in [n]$. Since $d(v^{i_0}, x) \leq 2$, $i_0\in \{i, j, k\}$. Similarly $j_0, k_0 \in \{i, j, k\}$. Hence $\{i, j, k\} = \{i_0, j_0, k_0\}$. Thus $\sigma =  \{v^{i_0}, v^{j_0}, v^{k_0}, v^{i_0, j_0, k_0}\} = K_v^{i_0, j_0, k_0}$. 
	
	Suppose $d(v, x) = 2$. Here,  $x = v^{i, j}$ for some $i, j\in [n]$. If $i_0 \notin \{i, j\}$, then $d(x, v^{i_0}) =3$, a contradiction as $v^{i_0} \in \sigma$.  Hence 
	$i_0 \in \{i, j\}$. By similar argument, we can show that $j_0, k_0 \in \{i, j\}$. Hence $\{i_0, j_0, k_0\} \subseteq \{i, j\}$, which is not possible.   Thus,
	$N(v) \subseteq \sigma$. 
\end{proof}

We  now review a result,  which will play a key role in the proof of \Cref{thm:collapsibility2}.  

Let $X$ be a simplicial complex on   vertex set $[n]$ and let $\prec: \sigma_1,\ldots,\sigma_m$ be a  linear ordering  of the maximal simplices  of $X$.
Given a $\sigma \in X$,   the \textit{minimal exclusion sequence} $\mes_{\prec}(\sigma)$ is defined as follows.
Let $i$ denote the smallest index such that $\sigma \subseteq \sigma_i$.
If $i=1$, then $\mes_{\prec}(\sigma)$ is the null sequence.
If $i\ge 2$, then $\mes_{\prec}(\sigma)=(v_1,\ldots, v_{i-1})$ is a finite sequence of length $i-1$ such that
$v_1=\min (\sigma\setminus \sigma_1)$ and  for each $k\in\{2, \ldots, i-1\}$, 
\[v_k=\begin{cases}
	\min(\{v_1,\dots,v_{k-1}\}\cap (\sigma \setminus \sigma_k)) & \text{if } \{v_1,\dots,v_{k-1}\}\cap (\sigma \setminus \sigma_k)\neq\emptyset,\\
	\min (\sigma\setminus \sigma_k) & \text{otherwise.}
\end{cases} \]

Let $M_{\prec}(\sigma)$ denote the set of vertices appearing in $\mes_{\prec}(\sigma)$. Define
\[d_{\prec}(X):=\max_{\sigma \in X}|M_{\prec}(\sigma)|.\]
The following result was stated and proved in \cite[Proposition 1.3]{MT09} as a special case where
$X$ is the nerve of a finite family of sets and then generalized by Lew for arbitrary simplical complex.

\begin{proposition}\cite[Theorem 6]{Lew2018} \label{thm:minimalexclusion}
	If $\prec$ is a linear ordering of the maximal simplices   of $X$, then $X$ is $d_{\prec}(X)$-collapsible.
\end{proposition}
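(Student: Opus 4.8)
The plan is to prove the two bounds separately: that $\vr{\I_n}{2}$ is $4$-collapsible, and that it is not $3$-collapsible.

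For the upper bound I will invoke \Cref{thm:minimalexclusion} and show that $d_{\prec}(\vr{\I_n}{2})\le 4$ for an \emph{arbitrary} linear ordering $\prec$ of the maximal simplices (interestingly, the bound will be insensitive to the choice of $\prec$). The engine is the following structural claim, read off from \Cref{thm:maximal2}: every $4$-element face contained in some closed neighbourhood $N[v]$ has $N[v]$ as its unique maximal container. To see this, note that such a $\tau\subseteq N[v]$ is either $\{v,v^i,v^j,v^k\}$ or $\{v^i,v^j,v^k,v^l\}$, so in both cases $\tau$ contains at least three neighbours of $v$; and a vertex whose closed neighbourhood contains three distinct neighbours of $v$ must be $v$ itself (two neighbours $v^a,v^b$ have only $v$ and $v^{a,b}$ as common neighbours, and a third neighbour $v^c$, at distance $3$ from $v^{a,b}$, eliminates $v^{a,b}$). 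Hence $N[v]$ is the only type-$(i)$ maximal simplex containing $\tau$. A maximal simplex of type $(ii)$ or $(iii)$ has exactly four vertices, so it could contain $\tau$ only by equalling it; it would then be a maximal simplex sitting inside the simplex $N[v]$, forcing it to equal $N[v]$, which is impossible since a comparison of pairwise-distance multisets shows $N[v]$ is of neither type. This proves the claim.

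With the claim established, fix a face $\sigma$. If $|\sigma|\le 4$ then $M_{\prec}(\sigma)\subseteq\sigma$ gives $|M_{\prec}(\sigma)|\le 4$ at once. If $|\sigma|\ge 5$, then since types $(ii)$ and $(iii)$ have only four vertices, $\sigma$ lies in a type-$(i)$ simplex, say $\sigma\subseteq N[v]$. Suppose for contradiction that $M_{\prec}(\sigma)$ contained five distinct vertices $u_1,\dots,u_5$, listed in order of first appearance in $\mes_{\prec}(\sigma)$. By the definition of the minimal exclusion sequence, the fifth vertex $u_5$ is introduced only at a maximal simplex $\tau$ (preceding the first simplex containing $\sigma$) for which $\{u_1,u_2,u_3,u_4\}\subseteq\tau$ while $\sigma\not\subseteq\tau$. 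But $\{u_1,u_2,u_3,u_4\}\subseteq\sigma\subseteq N[v]$ is a $4$-element face of $N[v]$, so by the claim its unique maximal container is $N[v]$; hence $\tau=N[v]\supseteq\sigma$, contradicting $\sigma\not\subseteq\tau$. Thus $|M_{\prec}(\sigma)|\le 4$ for every $\sigma$, so $d_{\prec}(\vr{\I_n}{2})\le 4$, and \Cref{thm:minimalexclusion} yields $4$-collapsibility.

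For the lower bound I will show $\vr{\I_n}{2}$ is not $3$-collapsible. By \Cref{prop:collapsibilitysubcomplex} a $3$-collapsible complex is homotopy equivalent to a complex of dimension at most $2$, hence has $\tilde{H}_3(\,\cdot\,;\Z)=0$, so it suffices to certify $\tilde{H}_3(\vr{\I_n}{2};\Z)\neq 0$. This is supplied by the Adamaszek--Adams computation \cite{Adamaszek2021} that $\vr{\I_n}{2}$ is homotopy equivalent to a \emph{nonempty} wedge of $3$-spheres for $n\ge 3$; already the base case $n=3$ illustrates this, where $\vr{\I_3}{2}$ is precisely the collection of subsets of $V(\I_3)$ containing no antipodal pair, i.e. the boundary of the $4$-dimensional cross-polytope, which is $\simeq S^3$. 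Combining the two bounds shows that the collapsibility number of $\vr{\I_n}{2}$ is exactly $4$, as claimed.

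The main obstacle is the structural claim in the second paragraph: making the minimal-exclusion estimate land on exactly $4$ rather than $5$ hinges on a $4$-element subface of a star $N[v]$ having a \emph{unique} maximal container, and verifying this requires the full classification from \Cref{thm:maximal2} together with a careful distance analysis in the hypercube—in particular ruling out coincidence of such a subface with a maximal simplex of type $(ii)$ or $(iii)$. Once that is in place, the rest is bookkeeping with the definition of $\mes_{\prec}$ and a citation for the nonvanishing of $\tilde{H}_3$.
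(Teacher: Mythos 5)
Your proposal does not prove the statement under review. The statement is Lew's general collapsibility criterion: for an \emph{arbitrary} finite simplicial complex $X$ and an \emph{arbitrary} linear ordering $\prec$ of its maximal simplices, $X$ is $d_{\prec}(X)$-collapsible. What you have written is instead a proof of \Cref{thm:collapsibility2} (that the collapsibility number of $\vr{\I_n}{2}$ is $4$), and your upper bound explicitly \emph{invokes} \Cref{thm:minimalexclusion} as a black box. As a proof of the proposition itself this is circular: the estimate $|M_{\prec}(\tau)|\le 4$ has no consequence until one knows that the bound $d_{\prec}(X)$ on exclusion sets actually yields a sequence of elementary $d_{\prec}(X)$-collapses, and that implication is precisely the content of the statement. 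Nothing in your write-up constructs a single elementary collapse. A genuine proof (as in \cite{Lew2018}, following \cite{MT09}) runs roughly as follows: with facets ordered $\sigma_1\prec\cdots\prec\sigma_m$, the faces $\tau$ whose minimal containing facet is $\sigma_m$ are exactly those contained in $\sigma_m$ alone; for such $\tau$ one checks from the definition of $\mes_{\prec}$ that $M_{\prec}(\tau)\subseteq\tau$ is itself contained in no $\sigma_j$ with $j<m$, and one organizes the faces contained only in $\sigma_m$ into intervals with bottoms the sets $M_{\prec}(\tau)$, of size at most $d_{\prec}(X)$, realizing their removal as elementary $d_{\prec}(X)$-collapses; the remaining complex is generated by $\sigma_1,\ldots,\sigma_{m-1}$ with unchanged exclusion data, and one inducts on $m$ down to the void complex. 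None of this machinery -- the interval decomposition, the verification of freeness, the induction -- appears in your proposal, so with respect to the stated proposition the attempt has a fatal gap: it assumes what it must prove. (The paper itself does not reprove the proposition either; it cites \cite{Lew2018}.)

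For what it is worth, read as a blind proof of \Cref{thm:collapsibility2} your argument appears sound, and is in one respect sharper than the paper's: your structural claim that every $4$-element face contained in a star $N[v]$ has $N[v]$ as its \emph{unique} maximal container (correctly handling the all-distance-$2$ coincidence with $K_w^{i,j,k}$ via maximality inside the simplex $N[v]$) makes the bound $|M_{\prec}(\tau)|\le 4$ hold for \emph{every} linear order, whereas the paper must place the type-$(i)$ facets first and control the exclusion sequence through \Cref{lem:3neighbour2}. But that is an answer to a different question, and it does not repair the mismatch with the statement you were asked to prove.
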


We are now ready to prove  main result of this section.

\begin{proof}[Proof of \Cref{thm:collapsibility2}]
	
	We must show that for $n \geq 3$, the collapsibility number of $\vr{\I_n}{2}$ is $4$. Since $\vr{\I_n}{2}$ is homotopy equivalent to a  wedge sum of spheres of dimension $3$ \cite{Adamaszek2021}, 	$\tilde{H}_3(\vr{\I_n}{2}) \neq 0$ and therefore by using \Cref{prop:collapsibilitysubcomplex} we conclude that collapsibility of $\vr{\I_n}{2}$  is $\geq 4$. It is enough to show that $\vr{\I_n}{2}$ is $4$-collapsible. 
	From \Cref{thm:maximal2}, each maximal simplex  of  $\vr{\I_n}{2}$ is of the form either $(i)$ $N[v]$  or  $(ii)$ $\{v, v^{i}, v^{j}, v^{i, j}\}$ or $(iii)$ $K_v^{i_0, j_0, k_0}$. It is easy to check that  these three sets of maximal simplices are pairwise disjoint sets. Choose a linear order $\prec_1$ on maximal simplices of the type $(i)$. Extend $\prec_1$ to a linear order $\prec$ on maximal simplices  of  $\vr{\I_n}{2}$, where maximal simplices of the type $(i)$ are ordered first, {\it i.e.}, for any two maximal simplices $\sigma_1$ and $\sigma_2$, if $\sigma_1 = N[v]$ for some $v$ and $\sigma_2$ is of the type $(ii)$ or $(iii)$, then $\sigma_1 \prec \sigma_2$. Let $\tau \in \vr{\I_n}{2}$.  Let $\sigma$ be the smallest (with respect to $\prec$) maximal simplex   of $\vr{\I_n}{2}$ such that $\tau \subseteq \sigma$.   If $\sigma \neq N[v]$ for all $v \in V(\I_n)$, then $|\sigma| = 4$ and therefore by definition $|M_{\prec} (\tau)| \leq 4 $. So, assume that  $\sigma = N[v]$ for some $v \in V(\I_n)$.  We first prove that $|M_{\prec} (\tau) \cap N(v) | \leq 3$.
	
	Let  $\mes_{\prec}(\tau) = (x_1, \ldots, x_t)$. Suppose $|M_{\prec}(\tau) \cap N(v)| \geq 4$. Let $k$ be the least integer  such that $|\{x_1, \ldots, x_k\} \cap N(v)| = 3$.  Clearly, $k < t$. Let  $\{x_1, \ldots, x_k\} \cap N(v) = \{x_{i_1}, x_{i_2}, x_{i_3} \} $. Observe that $x_k \in \{x_{i_1}, x_{i_2}, x_{i_3}\}$. We show that 
	$\{x_1, \ldots, x_{k+1}\} \cap N(v) = \{x_{i_1}, x_{i_2}, x_{i_3}\}$. Let $\gamma$ be a maximal simplex such that $\gamma \prec \sigma$. If $ \{x_1, \ldots, x_{k}\} \cap (\sigma \setminus \gamma) \neq  \emptyset $, then $x_{k+1} \in \{x_1, \ldots, x_k\}$. Hence 	$\{x_1, \ldots, x_{k+1}\} \cap N(v) = \{x_{i_1}, x_{i_2}, x_{i_3}\}$. If $ \{x_1, \ldots, x_{k}\} \cap (\sigma \setminus \gamma) = \emptyset $, then $\{x_{i_1}, x_{i_2}, x_{i_3}\} \subseteq \gamma$. From \Cref{lem:3neighbour2}, either  $N(v)\subseteq \gamma$ or $\gamma = K_v^{i_0, j_0, k_0}$. Since $\gamma \prec \sigma$, $\gamma \neq K_v^{i_0, j_0, k_0}$. Hence  $N(v)\subseteq \gamma$. Thus  $x_{k+1} \notin N(v)$, thereby implying that 	$\{x_1, \ldots, x_{k+1}\} \cap N(v) = \{x_{i_1}, x_{i_2}, x_{i_3}\}$. 
	If $k+1 = t$, then we get a contradiction to the assumption that $|M_{\prec}(\tau) \cap N(v)| \geq 4$. Inductively assume that for  all $k \leq l < t$, $\{x_1, \ldots, x_{l}\} \cap N(v)  = \{x_{i_1}, x_{i_2}, x_{i_3}\} $. By the argument similar as above we can show that   $\{x_1, \ldots, x_{t}\} \cap N(v)  = \{x_{i_1}, x_{i_2}, x_{i_3}\} $, a contradiction. Thus  $|M_{\prec} (\tau) \cap N(v) | \leq 3$. Since $\sigma = N[v]$, we conclude that $|M_{\prec}(\tau)| \leq 4$.
	
	From \Cref{thm:minimalexclusion}, $\vr{\I_n}{2}$  is $4$-collapsible.  This completes the proof.
\end{proof}

\section{The complex  $\vr{\I_n}{3}$}\label{sec:vr3}

In this section, we prove \Cref{thm:main3} and \Cref{thm:collapsibility}. This section is divided into three subsections. In the next subsection,  we characterise the maximal simplices of  $\vr{\I_n}{3}$.  In subsection \ref{subsec:collapsibility}, using the minimal exclusion sequence, we prove \Cref{thm:collapsibility}. Finally, in subsection \ref{subsec:homology}, using the Mayer-Vietoris sequence for homology, we prove \Cref{thm:main3}. 

We first fix some notations, which we use throughout this section.  For any  $n \geq 1$,  let $\Delta_n = \vr{\I_n}{3}$. We say that a simplex $\sigma \in \Delta_n$ covers all places, if for each $i \in [n]$ there exist $v, w \in \sigma$ such that $v(i)= 1$ and $w(i) = 0$. For each $i \in \{n\}$ and $\epsilon \in \{0, 1\}$, let $\I_n^{(i, \epsilon)}$ be the induced subgraph of $\I_n$ on the vertex set $\{v \in V(\I_n) : v(i) = \epsilon \}$. Observe that $\I_n^{(i, \epsilon)} \cong \I_{n-1}$.

\subsection{Maximal simplices}\label{subsec:maximal}

We give a characterisation of the maximal simplices of $\Delta_n$ in \Cref{thm:maximalsimplex}.  We first establish a few lemmas, which we need to prove \Cref{thm:maximalsimplex}.
\begin{lemma} \label{lem:neighbour}
	Let $n \geq 5$ and $\sigma \in \Delta_n$ be a maximal simplex such that $\sigma$ covers all places. Then  $N(v) \cap \sigma \neq \emptyset$ for all $v \in \sigma$.
\end{lemma}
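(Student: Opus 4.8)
The plan is to argue by contradiction. Suppose $\sigma$ is maximal and covers all places, yet some $v\in\sigma$ has $N(v)\cap\sigma=\emptyset$. First I would normalize by applying the coordinatewise XOR automorphism $x\mapsto x\oplus v$ of $\I_n$; this is a distance-preserving automorphism of $\I_n$, hence maps $\Delta_n$ to itself and preserves both maximality and the (symmetric) property of covering all places, while sending $v$ to the all-zero vertex $\mathbf 0$. So I may assume $v=\mathbf 0$. Identifying each vertex with its support as in the Remark, set $\mathcal F=\{\mathrm{supp}(w):w\in\sigma\setminus\{v\}\}$. Then every $A\in\mathcal F$ satisfies $|A|\le 3$ (distance to $v$) and $|A|\ge 2$ (a support of size $1$ would be a neighbour of $v$ in $\sigma$), so $|A|\in\{2,3\}$; covering all places translates to $\bigcup_{A\in\mathcal F}A=[n]$ (the vertex $\mathbf 0$ supplies a $0$ in every coordinate); and membership in $\Delta_n$ gives $|A\triangle B|\le 3$ for all $A,B\in\mathcal F$.

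The key observation is that any two $3$-sets $A,B\in\mathcal F$ satisfy $|A\cap B|\ge 2$, since $|A\triangle B|=|A|+|B|-2|A\cap B|=6-2|A\cap B|\le 3$. Writing $\mathcal F_3$ for the $3$-sets of $\mathcal F$, I split into two cases according to whether these sets have a common point.

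In \textbf{Case A}, where $\mathcal F_3=\emptyset$ or $\bigcap\mathcal F_3\neq\emptyset$, I choose $i\in\bigcap\mathcal F_3$ (any $i\in[n]$ if $\mathcal F_3=\emptyset$) and try to enlarge $\sigma$ by the neighbour $v^i$. A direct check shows $d(v^i,w)\le 3$ for every $w\in\sigma$: the distance to $v$ is $1$; to a vertex with $3$-set support $A$ (which contains $i$) it is $|A|-1=2$; and to a vertex with $2$-set support it is at most $1+2=3$. Since $v$ has no neighbour in $\sigma$, we have $v^i\notin\sigma$, so $\sigma\cup\{v^i\}\in\Delta_n$ strictly contains $\sigma$, contradicting maximality.

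In \textbf{Case B}, where $\bigcap\mathcal F_3=\emptyset$ and $\mathcal F_3\neq\emptyset$, I claim $\mathcal F_3$ must be exactly the four $3$-subsets of a single $4$-set $S$; this structural claim is the main obstacle. The first step is that any \emph{three} $3$-sets with pairwise intersections $\ge 2$ have a common point, for otherwise two of their pairwise intersections would be disjoint $2$-element subsets of one $3$-set. Passing to a minimal subfamily of $\mathcal F_3$ with empty intersection, this "every three meet" property forces the subfamily to consist of precisely the four $3$-subsets of some $4$-set $S=\{a,b,c,d\}$, and the pairwise-intersection bound then leaves no room for any additional $3$-set, so $\mathcal F_3$ equals these four sets. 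Finally I check that every $2$-set $B\in\mathcal F$ lies in $S$: a $2$-set meeting each of the four $3$-subsets of $S$ must be contained in $S$, since a $2$-set reaching outside $S$ would miss some $3$-subset entirely. Therefore $\bigcup\mathcal F\subseteq S$ has at most $4<n$ elements, contradicting $\bigcup\mathcal F=[n]$. Since both cases are impossible, no such $v$ exists, proving the lemma.
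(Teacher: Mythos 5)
Your proof is correct, and it takes a noticeably different route from the one in the paper. The paper's argument is a forced march driven by maximality: from $v^1\notin\sigma$ it extracts a vertex $x=v^{i_0,j_0,k_0}$ at distance $\ge 4$ from $v^1$, then iterates ($v^{i_0}\notin\sigma$, $v^{j_0}\notin\sigma$, \dots) to pin down the full tetrahedron $\{v^{i_0,j_0,k_0},v^{j_0,k_0,l_0},v^{i_0,k_0,l_0},v^{i_0,j_0,l_0}\}\subseteq\sigma$, and only then invokes ``covers all places'' at a fifth coordinate $p$ to reach a contradiction. You instead translate everything into the support language, observe that all supports in $\sigma\setminus\{v\}$ have size $2$ or $3$ with any two $3$-sets meeting in at least $2$ points, and split on whether the $3$-sets have a common point: if they do (or there are none), maximality is violated directly because $\sigma\cup\{v^i\}$ is still a simplex; if they do not, a short Helly-type argument (any three such $3$-sets meet, so a minimal subfamily with empty intersection is the four $3$-subsets of a $4$-set $S$, and every $2$-set is then trapped inside $S$) confines $\bigcup\mathcal F$ to $4<n$ coordinates, contradicting the covering hypothesis without any further use of maximality. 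The two proofs share the same combinatorial core (the tetrahedron configuration and the final appeal to $n\ge 5$), but your dichotomy replaces most of the element-chasing with a clean structural lemma on intersecting families of $3$-sets, which is shorter and makes transparent exactly where maximality versus the covering hypothesis is used; the paper's version is more mechanical but requires no auxiliary combinatorial claim. The only spot worth writing out in full detail is your assertion that a minimal subfamily with empty intersection must be the four $3$-subsets of a $4$-set: the standard argument (pick $x_j\in\bigcap_{i\ne j}A_i$, note the $x_j$ are distinct, and use $|A_\ell|=3$ to force the subfamily to have exactly four members with $A_\ell=\{x_j:j\ne\ell\}$) does go through, but as stated it is only a sketch.
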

\begin{proof}
	
	Suppose there exists  $v \in \sigma$ such that $N(v) \cap \sigma = \emptyset$.  Since  $v^1 \notin \sigma$ and $\sigma$ is  a maximal simplex, there exists  $x \in \sigma$ such that $d(x, v^1) \geq 4$. It is easy to see that if $d(x, v) \leq 2$, then $d(x, v^1) \leq 3$. Hence $d(v, x) = 3$. Here,  $x=v^{i_0,j_0, k_0}$ for some $i_0, j_0, k_0 \in [n]$.  If $1 \in \{i_0, j_0, k_0\}$, then $d(v^1, x) = 2$. Hence $1 \notin \{i_0, j_0, k_0\}$.  Since $v^{i_0} \notin \sigma$, there exists  $y \in \sigma$ such that $d(y, v^{i_0}) \geq 4$. Here $d(v, y) = 3$.  Hence $y = v^{i, j, k}$ for some $i, j, k \in [n]$.  If $|\{i_0, j_0, k_0\} \cap \{i, j, k\}| \leq1$, then $d(y, v^{i_0, j_0, k_0}) \geq 4$, which contradict the fact that $v^{i_0, j_0, k_0} \in \sigma$. If $i_0 \in \{i, j, k\}$, then $d(v^{i_0}, y) = 2$, a contradiction. So $i_0 \notin \{i, j, k\}$ and therefore  $\{i_0, j_0, k_0\} \cap \{i, j, k\}= \{j_0, k_0\}$. Hence $y = v^{j_0, k_0, l_0}$ for some $l_0 \in [n] \setminus \{i_0\}$. So, $\{v, v^{i_0, j_0, k_0}, v^{j_0, k_0, l_0}\}\subseteq \sigma$. Since $v^{j_0} \notin \sigma$, there exists  $z \in \sigma$ such that $d(z, v^{j_0}) \geq 4$. Here,  $d(v, z) = 3$.  Since $d(z, v^{i_0, j_0, k_0}) \leq 3$ and $d(z, v^{j_0, k_0, l_0}) \leq 3$, we conclude that $z = v^{i_0,k_0, l_0}$.  Further,  since $v^{k_0} \notin \sigma$, there exists  $w \in \sigma $ such that 
	$d(w, v^{k_0}) \geq 4$. Here $d(v, w) = 3$. Since $d(w, v^{i_0, j_0, k_0}) \leq 3$, $d(w, v^{j_0, k_0, l_0}) \leq 3$  and $d(w, v^{i_0, k_0, l_0}) \leq 3$, we conclude that   $w = v^{ i_0, j_0, l_0, }$. So,
	$\{v, v^{i_0, j_0, k_0}, v^{j_0, k_0, l_0}, v^{i_0, k_0, l_0}, v^{i_0,j_0,  l_0}\} \subseteq \sigma$.  Since $n \geq 5$, there exists  $p \in [n] \setminus \{i_0, j_0, k_0, l_0\}$. Observe that $v^{i_0, j_0, k_0}(p) = v^{j_0, k_0, l_0}(p) = v^{i_0, k_0, l_0}(p) = v^{i_0, j_0, l_0}(p) = v(p)$. Since $\sigma$ covers all places, 
	there exists  $u \in \sigma$
	such that $u(p) = \{0, 1\} \setminus \{v(p)\}$. Since $N(v) \cap \sigma = \emptyset$, $u \neq v^{p}$. Thus, either $d(v, u) = 2$ or $d(v, u) = 3$. If $d(v, u) = 2$, then $u = v^{p, r}$ for some $r \in [n]$. If $r \notin \{i_0, j_0, k_0\}$, then $d(u, v^{i_0,j_0, k_0}) = 4$, a contradiction. Hence $r \in \{i_0, j_0, k_0\}$. Without loss of generality we  assume that $r = i_0$. In this case $d(u, v^{j_0, k_0, l_0}) = 4$, a contradiction. Hence $d(v, u) = 3$.  Here $u = v^{p, s, t}$ for some $s, t \in [n]$. If $|\{s, t\} \cap \{i_0, j_0, k_0\}| \leq 1$, then $d(u, v^{i_0, j_0, k_0}) \geq 4$. Hence $\{s, t\} \subseteq \{i_0, j_0, k_0\}$. Without loss of generality we   assume that $\{s, t\} = \{i_0, j_0\}$. Then $d(u, v^{i_0, k_0, l_0}) = 4$, a contradiction. Thus there exists no $u \in \sigma$ such that $u(p) = \{0, 1\} \setminus \{v(p)\}$, which is a contradiction to the hypothesis that $\sigma$ covers all places. Hence $N(v) \cap \sigma \neq \emptyset$.
\end{proof}

\begin{lemma}\label{lem:fullnbd1}
	Let $n \geq 5$ and let $\sigma \in \Delta_n$ be a maximal simplex such that $\sigma$ covers all places. If there exists a $w \in \sigma$ such that $N(w) \cap \sigma = \{v\}$, then $N(v) \subseteq \sigma$.
\end{lemma}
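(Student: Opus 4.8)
The plan is to argue by contradiction. Since $w \in N(v) \cap \sigma$, we have $w = v^{p_0}$ for some $p_0 \in [n]$, and $v^{p_0} = w \in \sigma$, so to prove $N(v) \subseteq \sigma$ it suffices to show $v^i \in \sigma$ for every $i \neq p_0$. As $\sigma$ is maximal, $v^i \in \sigma$ is equivalent to $d(v^i, u) \le 3$ for all $u \in \sigma$. I would fix $l_0 \neq p_0$, assume $v^{l_0} \notin \sigma$, and derive a contradiction from the three hypotheses: maximality, covering all places, and $N(w) \cap \sigma = \{v\}$.

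First I would record the structure that $N(w) \cap \sigma = \{v\}$ imposes on $\sigma$. For $u \in \sigma$ with $u(p_0) = v(p_0)$ one has $d(w,u) = d(v,u) + 1 \le 3$, so $d(v,u) \le 2$ and $u \in \{v\} \cup \{v^a, v^{a,b} : a,b \neq p_0\}$; while for $u \in \sigma \setminus \{w\}$ with $u(p_0) \neq v(p_0)$ one has $u \notin N(w)$, hence $d(w,u) \ge 2$, forcing $d(v,u) = 3$, i.e. $u = v^{p_0,a,b}$. Next, using maximality, $\sigma \cup \{v^{l_0}\}$ is not a simplex, so there is $x \in \sigma$ with $d(x, v^{l_0}) \ge 4$; the triangle inequality through $v$ gives $d(x,v^{l_0}) \le 4$, so $d(v,x) = 3$ and $x$ does not flip $l_0$, and comparing with $w$ (via $d(w,x)\le 3$) forces $x$ to flip $p_0$. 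Thus $x = v^{p_0,i_0,j_0}$ with $l_0 \notin \{p_0,i_0,j_0\}$.

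The heart of the argument is then to analyse the distance-$2$ vertices $v^{a,b}$ ($a,b\neq p_0$) of $\sigma$. Comparing with $x$ shows each such pair $\{a,b\}$ meets $\{i_0,j_0\}$, and comparing two such vertices shows the pairs are pairwise intersecting; hence they form either a star (a common element) or a triangle $\{i_0,j_0\},\{i_0,c\},\{j_0,c\}$. In the star case, or when there is at most one or none of these vertices, let $m$ be a common element (taking $m = i_0$ if there are none, noting $m \neq p_0$); a short computation of $d(v^{p_0,m}, u)$ over the three vertex types shows that the only constraint, $m \in \{a,b\}$, comes from the distance-$2$ vertices, so $v^{p_0,m}$ lies within distance $3$ of all of $\sigma$. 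By maximality $v^{p_0,m} \in \sigma$, but $v^{p_0,m} \in N(w)$ and $v^{p_0,m} \neq v$, contradicting $N(w) \cap \sigma = \{v\}$.

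It remains to exclude the triangle case, and this is where the covering hypothesis and $n \ge 5$ enter, and where the main obstacle lies. The tempting shortcut, namely to cover two ``far'' coordinates and produce a pair at distance $>3$, fails because two covering vertices may reuse a common index (for instance $v^{l_0,i_0}$ and $v^{q,i_0}$ are close). Instead I would choose a place $q \notin \{p_0,i_0,j_0,c\}$, possible since $n \ge 5$, and use that $\sigma$ covers place $q$. Since the distance-$1$ vertices of $\sigma$ can only be $v^{i_0}, v^{j_0}$ (again by comparison with $x$) and every distance-$2$ or distance-$3$ vertex flipping $q$ must have its partner in $\{i_0,j_0\}$, the vertex covering $q$ is one of $v^{q,i_0}, v^{q,j_0}, v^{p_0,q,i_0}, v^{p_0,q,j_0}$; each of these is at distance $\ge 4$ from one of $v^{i_0,j_0}, v^{i_0,c}, v^{j_0,c}$, contradicting that all three triangle vertices lie in $\sigma$. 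This closes the triangle case, so $v^{l_0} \notin \sigma$ is impossible and $N(v) \subseteq \sigma$. The crux throughout is organizing the distance-$2$ vertices as an intersecting family and killing the triangle configuration with the covering condition.
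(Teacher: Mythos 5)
Your proof is correct, but it is organized quite differently from the paper's. The paper fixes a place $l_0$ with $v^{l_0}\notin\sigma$, takes a vertex $x\in\sigma$ \emph{covering} $l_0$, and branches on $d(v,x)\in\{2,3\}$, then builds $\sigma$ up element by element through repeated applications of maximality (invoking \Cref{lem:neighbour} along the way) until each branch collapses. You instead use $N(w)\cap\sigma=\{v\}$ once and for all to classify every vertex of $\sigma$ by type (those flipping $p_0$ other than $w$ are exactly the $v^{p_0,a,b}$; the rest are at distance $\le 2$ from $v$), extract a single witness $x=v^{p_0,i_0,j_0}$ from maximality applied to $v^{l_0}$, and then observe that the distance-$2$ vertices form an intersecting family of $2$-sets, hence a star or a triangle. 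The star case dies by producing a second neighbour $v^{p_0,m}$ of $w$ that is within distance $3$ of all of $\sigma$ (contradicting maximality or $N(w)\cap\sigma=\{v\}$), and the triangle case is the only place where the covering hypothesis and $n\ge 5$ enter — all candidate vertices covering a fifth place $q$ are checked to be too far from one of the three triangle vertices, and your check of those four candidates is complete and correct. This is genuinely shorter and more conceptual: it avoids \Cref{lem:neighbour} entirely and isolates exactly where each hypothesis is used, whereas the paper's element-by-element construction is more mechanical (its Case 2 in fact terminates in precisely your triangle configuration $\{v^{l_0,s_0},v^{s_0,t_0},v^{l_0,t_0}\}$, so the two arguments meet at the end but arrive there along different routes). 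The one fact you rely on that the paper does not is the classical star-or-triangle dichotomy for intersecting families of $2$-sets, which is standard and which you apply correctly, including the observation that the additional constraint that every pair meets $\{i_0,j_0\}$ pins the triangle to $\{i_0,j_0\},\{i_0,c\},\{j_0,c\}$.
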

\begin{proof}
	Since $w \in N(v)$, $w = v^s$ for some $s \in [n]$.	Without loss of generality we assume that $v = v_1 \ldots v_n$, where $v_i = 0$ for each $i \in [n]$ and $s=n$, {\it i.e.,} $w = v^n$. Suppose $N(v) \not\subseteq \sigma$. There exists  $l_0 \in [n]$ such that $v^{l_0} \notin \sigma$. Clearly, $l_0 \neq n$. Since $\sigma $ covers all places, there exists  $x \in \sigma$ such that $x(l_0) = 1$. Further, since  $x \neq v^{l_0}$, $d(x, v) \geq 2$.  Thus, either $d(x, v) = 3$ or $d(x, v) = 2$. We consider the following two cases:
	
	{\it Case 1.} $d(x, v) = 3$.
	
	Here, $x= v^{l_0, i_0, j_0}$ for some $i_0, j_0 \in [n]$. If $n \notin \{i_0, j_0\}$, then $d(x, v^n) = 4$. Since $v^n \in \sigma$, $d(x, v^n) \leq 3$ and thereby implying that $n \in \{i_0, j_0\}$. Without loss of generality we assume that $n =j_0$, {\it i.e.}, $x = v^{l_0,  i_0, n}$.
	
	From \Cref{lem:neighbour}, 	there exists  $y \in \sigma $ such that $y \sim x$. Clearly $y \neq v, v^n$.  
	There exists $j \in [n]$ such that  $y = x^j$. If $j \notin \{l_0, i_0, n\}$, then $d(y, v) \geq 4$. Hence   $j \in \{l_0, i_0, n\}$ and thereby implying that  $d(y, v) = 2$. If $j \neq n$, then $y = v^{l_0, n}$ or $y = v^{i_0, n}$. In both the cases $y \sim w = v^n$, which is not possible since $N(w) \cap \sigma = \{v\}$. Hence $j = n$ and $y = v^{l_0, i_0}$.	So, 
	$\{v, v^n, v^{l_0, i_0, n}, v^{l_0, i_0}\} \subseteq \sigma$.
	
	Since $v^{i_0, n} \sim v^n= w$ and $N(w) \cap \sigma = \{v\}$, we see that  $v^{i_0, n} \notin \sigma$. Further, since $\sigma$ is a maximal simplex,   there exists  $z \in \sigma$ such that $d(z, v^{i_0, n}) \geq 4$.
	Observe that for any vertex $t$, if $t \sim v$, then $d (t, v^{i_0, n}) \leq 3$ and therefore we see that $z \not\sim v$. Since $z, v \in \sigma, d(z, v) \leq 3$.  Thus, either $d(z, v) = 3$ or $d(z, v) = 2$. If $d(z, v) = 3$, then   $z = v^{i, j, k}$ for some $i, j,k \in [n]$.  Observe that  if  $n \notin \{i, j, k\}$, then $d(z, v^n) = 4$, a contradiction as $v^n \in \sigma$.  Hence  $n \in \{i, j, k\}$. Without loss of generality we assume that $i = n$, {\it i.e.}, $z= v^{n, j, k}$. But, then  $d(z, v^{i_0, n}) \leq 3$, which is a contradiction as $d(z, v^{i_0, n}) \geq 4$.  Thus $d(z, v) = 2$. So, $z= v^{i, j}$ for some $i, j \in [n]$. If $\{i, j\} \cap \{i_0, n\} \neq \phi$, then $d(v^{i, j}, v^{i_0, n}) \leq 3$. Hence $d(z, v^{i_0, n}) \geq 4$ implies  that $\{i, j\} \cap \{i_0, n\} = \emptyset$.  If $\{i, j\} \cap \{l_0, n, i_0\} = \emptyset$, then $d(v^{i, j}, v^{l_0,i_0, n}) \geq 4$. Since $v^{l_0, i_0, n} \in \sigma$, $\{i, j\} \cap \{l_0, i_0, n\} \neq \emptyset$. Thus, we conclude that $\{i, j\} \cap \{l_0,  i_0, n\} = \{l_0\}$. Hence $z = v^{l_0, k_0}$ for some $k_0 \neq i_0, n$. So, 
	$\{v, v^n, v^{l_0, i_0, n}, v^{l_0, i_0}, v^{l_0, k_0}\} \subseteq \sigma$. 
	
	Since $v^{l_0, n} \sim v^n$ and $N(v^n) \cap \sigma =\{v\}$, $v^{l_0, n} \notin \sigma$.  Hence there exists  $p \in \sigma $ such that $d(p , v^{l_0, n}) \geq4$.  Observe that for any $u \sim v$, $d (u, v^{l_0, n}) \leq 3$ and therefore $p \not\sim v$. Thus, $d(p, v) \geq 2$.  Suppose  $d(p, v) = 3$.  Then   $p = v^{i, j, k}$ for some $i, j,k \in [n]$.  If   $n \notin \{i, j, k\}$, then $d(p, v^n) = 4$, a contradiction as $v^n \in \sigma$.  Hence  $n \in \{i, j, k\}$. Without loss of generality we assume that $i = n$, {\it i.e.}, $p= v^{n, j, k}$. But, then  $d(p, v^{l_0, n}) \leq 3$, which is a contradiction.  Thus $d(p, v) = 2$. 	
	So, $p = v^{i, j}$ for some $i, j \in [n]$. If $\{i, j\} \cap \{ l_0, n\} \neq \phi$, then $d(v^{i, j}, v^{ l_0, n}) \leq 3$. Hence $\{i, j\} \cap \{l_0, n\} = \emptyset$.  If $\{i, j\} \cap \{l_0, i_0, n\} = \emptyset$, then $d(v^{i, j}, v^{l_0,  i_0, n}) \geq 4$. Since $v^{l_0,  i_0, n} \in \sigma$, $\{i, j\} \cap \{l_0, i_0, n\} \neq \emptyset$. Hence $\{i, j\} \cap \{l_0, i_0, n\} = \{i_0\}$. Thus $p = v^{i_0, j}$ for some $j\neq n, l_0$. If $j \neq k_0$, then $d(p, v^{l_0, k_0}) =4$, which is a contradiction as  $v^{l_0, k_0} \in \sigma$. Hence $p = v^{i_0, k_0}$. So, 
	$\{v, v^n, v^{l_0, i_0, n}, v^{l_0, i_0}, v^{l_0, k_0}, v^{i_0, k_0}\} \subseteq \sigma$. 
	
	Since $n \geq 5$, there exists  $j_0 \in [n] \setminus \{l_0, i_0,k_0, n\}$. Further, since $\sigma$ covers all places, there exits $q \in \sigma$ such that $q(j_0) = 1$. Observe that $0= v(j_0) = v^{n}(j_0) = v^{l_0, n, i_0}(j_0) = v^{l_0, i_0}(j_0) = v^{l_0, k_0}(j_0) = v^{i_0, k_0}(j_0)$ and therefore 
	$q \notin  \{v, v^n, v^{l_0, i_0, n}, v^{l_0, i_0}, v^{l_0, k_0}, v^{i_0, k_0}\} $. Since $d(v^{j_0}, v^{l_0, i_0, n}) = 4$, $q \neq v^{j_0}$. Hence $d(q, v) \geq 2$. 
	\begin{itemize}
		\item[(1.1)] Suppose  $d(q, v) = 2$.
		
		Here, 	$q = v^{ j_0, i}$ for some $i \in [n]$. If $i \notin \{l_0, i_0, n\}$, then $d(q, v^{l_0, i_0, n}) \geq 4$. Hence $v^{l_0,i_0, n} \in \sigma$ implies that $i \in \{l_0, i_0, n\}$. However, since  $d(v^{ j_0, l_0}, v^{i_0, k_0}) =  4 = d(v^{j_0, i_0}, v^{l_0, k_0}) = d(v^{j_0, n}, v^{i_0, k_0}) $, we conclude that $i \notin \{l_0, i_0, n\} $.   Thus, there exist no $q \in \sigma$ such that $q(j_0) = 1$, which is a contradiction to the assumption that $\sigma$ covers all places.
		
		\item[(1.2)] Suppose $d(q, v) = 3$.
		
		Here,	$q = v^{j_0, i, j}$ for some $i , j \in [n]$. Observe that if  $|\{i, j\} \cap \{l_0, i_0, n\}| \leq  1$, then $d(q, v^{l_0, i_0, n}) \geq 4$, which is not possible since  $v^{l_0, i_0, n} \in \sigma$. Hence $\{i, j\} \subset \{l_0,  i_0, n\}$.  If $n \notin \{i, j\}$, then $d(q, v^{n}) = 4$, a contradiction as $v^n \in \sigma$. Hence $n \in \{i, j\}$. Without loss of generality we assume that $i=n$, {\it i.e.}, $q= v^{j_0, n, j}$, where $j \in \{l_0, i_0\}$.   It is easy to see that $d(v^{j_0, n, l_0}, v^{i_0, k_0} ) \geq 4$ and $d(v^{j_0, n, i_0}, v^{l_0, k_0} ) \geq 4$.  Since $v^{i_0, k_0}, v^{l_0, k_0} \in \sigma$, we see that $q \notin \{v^{j_0,  l_0, n}, v^{j_0,i_0, n }\}$. Thus, there exists no $q\in \sigma $ such that 
		$q(j_0) = 1$, which is a contradiction.
	\end{itemize}
	
	{\it \large Case 2.} $d(x, v) = 2$.
	
	Here,  $x = v^{l_0, s_0}$ for some $s_0 \in [n]$. So, $\{v, v^n, v^{l_0, s_0}\} \subseteq \sigma$.  Since $v^{l_0, n} \sim v^n =  w$ and $N(w) \cap  \sigma = \{v\}$, $v^{l_0, n} \notin \sigma$. Hence there exists  $y_0 \in \sigma $ such that 
	$d(y_0, v^{l_0, n}) \geq 4$. Observe that for any $t \sim v$, $d(t, v^{l_0, n}) \leq 3$ and therefore $y_0 \not\sim v$. Thus, $d(y_0, v) \geq 2$. 
	Suppose $d(y_0, v) = 3$. Then $y_0 = v^{i, j, k}$ for some $i, j, k \in [n]$. If $n \notin \{i, j, k\}$, then $d(y_0, v^n) \geq 4$, a contradiction as $y_0, v^n \in \sigma$. Hence $n \in \{i, j, k\}$. But, then $d(y_0, v^{l_0, n}) \leq 3$, again a contradiction. Thus, $d(y_0, v) = 2$. Here, $y_0 = v^{i, j}$ for some $i, j \in [n]$. If $\{i, j\} \cap \{l_0, n\} \neq \emptyset$, then $d(y_0, v^{l_0, n}) \leq 3$. Hence $\{i, j\} \cap \{l_0, n\} = \emptyset$. Further, if $s_0 \notin \{i, j\}$, then  $d(y_0, v^{l_0, s_0}) \geq 4$. Hence  
	$s_0 \in \{i, j\}$. Therefore $y_0 = v^{s_0, t_0}$ for some $t_0 \in [n], t_0 \neq l_0, n$. So, $\{v, v^n, v^{l_0, s_0}, v^{s_0, t_0}\} \subseteq \sigma$.

	Since $v^{s_0, n} \sim v^n$ and $N(v^n) \cap  \sigma = \{v\}$, $v^{s_0, n} \notin \sigma$.  Hence  there exists  $z_0 \in \sigma $ such that 
	$d(z_0, v^{s_0, n}) \geq 4$.  By the  argument similar as above for the $y_0$, we  see that  $d(z_0, v) = 2$.   Therefore $z_0 = v^{i, j}$ for some $i, j \in [n]$. If $\{i, j\} \cap \{s_0, n\} \neq \emptyset$, then $d(z_0, v^{s_0, n}) \leq 3$. Hence $\{i, j\} \cap \{s_0, n\} = \emptyset$. If $l_0 \notin \{i, j\}$, then $d(z_0,v^{l_0, s_0} ) \geq 4$ and if $t_0 \notin \{i, j\}$, then $d(z_0, v^{s_0, t_0}) \geq 4$. Since $v^{l_0, s_0}, v^{s_0, t_0} \in \sigma$,  we conclude that  $\{i, j\} = \{l_0, s_0\}$, {\it i.e.}, 
	$z_0 = v^{l_0, t_0}$. So,  
	$\{v, v^n, v^{l_0, s_0}, v^{s_0, t_0}, v^{l_0, t_0}\} \subseteq \sigma $. 
	
	Since $n \geq 5$, there exists  $m_0 \in [n] \setminus \{n, l_0, s_0, t_0\}$. Further, since $\sigma$ covers all places, there exists   $p_0 \in \sigma $ such that $p_0(m_0) =1$. Clearly, $u(m_0) = 0$ for all $u\in \{v, v^n, v^{l_0, s_0}, v^{s_0, t_0}, v^{l_0, t_0}\} $. Hence $p_0 \notin  \{v, v^n, v^{l_0, s_0}, v^{s_0, t_0}, v^{l_0, t_0}\} $. 
	\begin{claim} \label{claim1}$p_0 = v^{m_0}$.
	\end{claim}
	\begin{proof}[Proof of \Cref{claim1}]
		Since $p_0, v \in \sigma, d(p_0, v) \leq 3$. Suppose $d(p_0, v) = 3$. Then $p_0 = v^{m_0, i, j} $ for some $i, j \in [n]$. If $n \notin \{i, j\}$, then $d(p_0, v^n)= 4$ and therefore $n \in \{i, j\}$. Further, if $\{s_0, l_0\} \cap \{i, j\} = \emptyset$, then $d(p_0, v^{l_0, s_0}) \geq 4$, which is not possible since $v^{l_0, s_0} \in \sigma$. Hence $\{i, j\} \cap \{l_0, s_0\} \neq \emptyset$ and therefore we see that $p_0$ is either $ v^{m_0, n, s_0}$ or $v^{m_0, n, l_0}$. But  
		$d(v^{m_0, n, s_0},  v^{l_0, t_0}) = 5$ and	 $d(v^{m_0, n, l_0},  v^{s_0, t_0}) = 5$. Hence $p_0 \not\in  \{ v^{m_0, n, s_0}, v^{m_0, n, l_0} \}.$ Therefore $d(p_0, v) \leq 2$.
		
		If $d(p_0, v) = 2$, then $p_0 = v^{m_0, i}$ for some $i \in [n]$.  Since $v^{l_0, t_0}, v^{l_0, s_0} \in \sigma$ and $d(v^{m_0, s_0}, v^{l_0, t_0}) = 4= d(v^{m_0, t_0}, v^{l_0, s_0})$, we conclude that  
		$i \not \in \{s_0, t_0\}$. But, then $d(p_0, v^{s_0, t_0}) = 4$, a contradiction as $ v^{s_0, t_0} \in \sigma$. Hence $d (p_0, v) = 1$. Therefore $p_0 = v^{m_0}$. This completes the proof of \Cref{claim1}.
	\end{proof}
	
	So, 	$\{v, v^n, v^{l_0, s_0}, v^{s_0, t_0}, v^{l_0, t_0}, v^{m_0}\} \subseteq \sigma $.  Since $\sigma$ is a maximal simplex and 
	$v^{l_0} \notin \sigma $, there exists   $q_0 \in \sigma $ such that 
	$d(q_0, v^{l_0}) \geq 4$. Observe that  for any  $t$, if $d(v, t) \leq 2$, then $d(t, v^{l_0}) \leq 3$. Hence 
	$d(v, q_0) = 3$. Here, $q_0 = v^{i, j, k}$ for some $i, j, k \in [n]$. If $n \notin \{i, j, k\}$, then $d(q_0, v^{n}) \geq 4$. Hence $n \in \{i, j, k\}$. If $l_0 \in \{i, j, k\}$, then $d(q_0, v^{l_0}) \leq 3$. Hence $l_0 \notin \{i, j, k\}$. Further, if  $s_0 \notin \{i, j, k\}$, then $d(q_0, v^{l_0, s_0}) =5 $ and therefore  we see that $s_0 \in \{i, j, k\} $. Without loss of generality we assume that $i = n$ and $j =s_0$, {\it i.e.}, $q_0 = v^{n, s_0, k}$. If $t_0 \neq k$, then $d(q_0, v^{l_0, t_0}) \geq 4$, a contradiction as $v^{l_0, t_0} \in \sigma$. Hence $k = t_0$, {\it i.e.}, $q_0 = v^{n, s_0, t_0}$. But, then $d(q_0, v^{m_0}) = 4$, a contradiction. Thus,  there exists no 
	$q_0$ such that $d(q_0, v^{l_0}) \geq 4$,  a contradiction.
	
	Therefore  we conclude that $N(v) \subseteq \sigma$. This completes the proof. 
\end{proof}

Recall that for a $v \in V(\I_n)$ and $i_0, j_0, k_0 \in [n]$, $K_v^{i_0, j_0,k_0} = \{v, v^{i_0, j_0}, v^{i_0, k_0}, v^{j_0, k_0}\}$.
\begin{lemma}\label{lem:firsttypemaximal}
	Let $n \geq 5$ and let $\sigma \in \Delta_n$ be a maximal simplex such that $\sigma$ covers all places. If there exists a  $w \in \sigma$ such that $N(w) \cap \sigma = \{v\}$, then $\sigma = N(v)\cup K^{i_0,j_0,k_0}_v$ for some $i_0, j_0, k_0 \in [n]$.
\end{lemma}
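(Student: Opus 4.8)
The plan is to work in the subset model from the Remark, normalizing $v$ to the empty set and $w=v^n=\{n\}$, so that every vertex $x\in\sigma$ is identified with a set $A\subseteq[n]$ with $d(v,x)=|A|$ and $d(v^i,x)=|A\triangle\{i\}|$. The starting point is \Cref{lem:fullnbd1}, whose hypotheses are exactly those of the present statement and which already yields $N(v)\subseteq\sigma$ (this is where \emph{covers all places} gets consumed). Thus $\{v\}\cup N(v)\subseteq\sigma$, and the whole task reduces to pinning down which distance-$2$ vertices $v^{i,j}$ lie in $\sigma$.

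First I would show $\sigma$ contains no vertex at distance $3$ from $v$. Indeed, if $x=A\in\sigma$ then $|A|=d(v,x)\le 3<n$, so there is an index $i\notin A$; since $v^i\in\sigma$ we get $|A|+1=d(v^i,x)\le 3$, forcing $|A|\le 2$. Hence every element of $\sigma$ is $v$, some $v^i$, or some $v^{i,j}$. Let $\mathcal F=\{\{i,j\}:v^{i,j}\in\sigma\}$ record the distance-$2$ vertices. Two structural constraints on $\mathcal F$ follow at once. Because $v^{i,n}$ is adjacent to $w=v^n$ while $N(w)\cap\sigma=\{v\}$, no member of $\mathcal F$ contains $n$. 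Because two disjoint $2$-sets $\{i,j\},\{k,l\}$ give $d(v^{i,j},v^{k,l})=4>3$, the family $\mathcal F$ is pairwise intersecting. By the elementary dichotomy for pairwise-intersecting families of $2$-element sets, $\mathcal F$ is either a star (all members share a common index) or the three edges $\{i_0,j_0\},\{j_0,k_0\},\{i_0,k_0\}$ of a triangle; the empty and singleton families count as degenerate stars.

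The crux is to eliminate the star case by maximality of $\sigma$. If the members of $\mathcal F$ share a common index $a$ (in particular if $|\mathcal F|\le 1$), then $v^{a,n}\notin\sigma$, and I would check that $\sigma\cup\{v^{a,n}\}$ still has diameter $\le 3$: one has $d(v^{a,n},v)=2$, $d(v^{a,n},v^{a,b})=|\{n,b\}|=2$ for each $\{a,b\}\in\mathcal F$, $d(v^{a,n},w)=1$, and $d(v^{a,n},v^i)\le 3$ for all $i$. This contradicts maximality; the case $\mathcal F=\emptyset$ is excluded identically by adjoining any $v^{p,q}$ with $p,q\ne n$, which is at distance $3$ from $w$ and $\le 2$ from $v$. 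Therefore $\mathcal F$ is forced to be a triangle, and
\[
\sigma=\{v\}\cup N(v)\cup\{v^{i_0,j_0},v^{j_0,k_0},v^{i_0,k_0}\}=N(v)\cup K_v^{i_0,j_0,k_0}.
\]

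I expect the only genuinely delicate point to be the clean bookkeeping of the star-to-$v^{a,n}$ extension in the symmetric-difference metric, together with the complementary check that a triangle admits no further extension (no index lies in all three edges, so no $v^{p,n}$ can be added, and any new $2$-set misses one triangle edge), which confirms that the triangle configuration really is maximal. The rest is routine distance arithmetic.
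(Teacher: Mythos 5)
Your proof is correct. It begins exactly as the paper's does --- both invoke \Cref{lem:fullnbd1} (whose hypotheses coincide with those of this lemma) to obtain $N(v)\subseteq\sigma$, and both then rule out vertices at distance $3$ from $v$ by playing such a vertex against a suitable $v^t\in N(v)$ --- but the two arguments diverge in how they pin down the distance-$2$ layer. The paper proceeds constructively: writing $w=v^{l_0}$, it notes $v^{i,l_0}\notin\sigma$ for every $i\neq l_0$, and then applies maximality three times (against $v^{1,l_0}$, then $v^{i_0,l_0}$, then $v^{j_0,l_0}$) to manufacture the vertices $v^{i_0,j_0}$, $v^{j_0,k_0}$, $v^{i_0,k_0}$ one at a time, finishing with a containment check that no further $v^{i,j}$ is compatible with all three. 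You instead encode the distance-$2$ layer as a pairwise-intersecting family $\mathcal{F}$ of $2$-sets avoiding $n$, invoke the star-or-triangle dichotomy for such families, and dispose of the star case (common index $a$) with a single maximality argument: $v^{a,n}$ lies within distance $2$ of $v$ and of every $v^{a,b}$ with $\{a,b\}\in\mathcal{F}$, and within distance $3$ of every $v^i$, so it could be adjoined to $\sigma$. Your route is shorter and more conceptual; the paper's is more explicit, and its hands-on construction of the three triangle vertices foreshadows the configuration exploited later in \Cref{lem:freepair1}. One small remark: the ``complementary check that a triangle admits no further extension'' you flag at the end is not actually needed here --- once the star case is excluded, the dichotomy forces $\mathcal{F}$ to be exactly a triangle and $\sigma$ is determined; the maximality of $N(v)\cup K_v^{i_0,j_0,k_0}$ as a simplex of $\Delta_n$ is a separate fact used elsewhere in the paper.
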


\begin{proof}
	
	From \Cref{lem:fullnbd1}, $N(v) \subseteq \sigma$. Since $w \sim v$, $w = v^{l_0}$ for some $l_0 \in [n]$.
	Suppose there exists  $x \in \sigma $ such that $d(x, v) = 3$. Then $x = v^{i, j, k}$ for some $i, j, k \in [n]$. Choose $t \in [n] \setminus \{i, j, k\}$. Then $d(x, v^{t}) = 4$, a contradiction as $v^{t} \in N(v) \subseteq  \sigma$. Hence  $d(v, x) \leq 2 $ for all $x \in \sigma$.  Since $N(w) \cap \sigma  = \{v\}$, $v^{i, l_0} \notin \sigma$ for all $i \in [n], i \neq l_0$. Further, since $\sigma$ is a maximal simplex and  $v^{1, l_0} \notin \sigma$, there exists   $x_0 \in \sigma$ such that $d(x_0, v^{1, l_0}) \geq 4$. For any $p \sim v$, $d(p, v^{1, l_0}) \leq 3$ and therefore $d(x_0, v) = 2 $.  Hence $x_0 = v^{i_0, j_0} $ for some $i_0, j_0 \in [n]$. If  $\{i_0, j_0\} \cap \{1, l_0\} \neq \emptyset $, then $d(x_0, v^{1, l_0}) \leq 3$. Hence 
	$\{i_0, j_0\} \cap \{1, l_0\} = \emptyset $.  Thus $\{v, v^1, \ldots, v^n, v^{i_0, j_0}\} \subseteq \sigma$. Since $v^{i_0, l_0} \notin \sigma$, there exists $y_0 \in \sigma$ such that $d(y_0, v^{i_0, l_0}) \geq 4$. For any $q \in N(v)$, $d(q, v^{i_0, l_0}) \leq 3$ and therefore $d(y_0, v) \geq 2 $.  Since $d(x, v) \leq2$ for all $x \in \sigma$, we see that $d(y_0, v) = 2$. Hence $y_0 = v^{i, j} $ for some $i, j$. If  $\{i, j\} \cap \{i_0, j_0\} =  \emptyset $, then $d(y_0, v^{i_0, j_0}) \geq 4$. Hence $\{i, j\} \cap \{i_0, j_0\} \neq \emptyset$. If $i_0 \in \{i, j\}$, then $d(y_0, v^{i_0, l_0}) \leq 3$. Hence $i_0 \notin \{i, j\}$. Thus $y_0 = v^{j_0, k_0}$ for some $k_0 \neq i_0, l_0$. So, $\{v, v^1, \ldots, v^n, v^{i_0, j_0}, v^{j_0, k_0}\} \subseteq \sigma$.  Since  $v^{j_0, l_0} \notin \sigma$, there exists   $z_0 \in \sigma$ such that $d(z_0, v^{j_0, l_0}) \geq 4$. By an argument similar as above for  $y_0$, we  can see that  $z_0 = v^{i_0, k_0}$.   So, $\{v, v^1, \ldots, v^n, v^{i_0, j_0}, v^{j_0, k_0}, v^{i_0, k_0}\} \subseteq \sigma$.

	Suppose there exists   $p \in \sigma$ such that $p \notin 	\{v, v^1, \ldots, v^n, v^{i_0, j_0}, v^{i_0, k_0}, v^{j_0, k_0}\}$. Since $d(v, x) \leq 2$ for all $x \in \sigma$ and $p \notin N(v)$, we see that  $d(v, p) = 2$. Here, $p = v^{i, j}$ for some $i, j \in [n]$.
	Since $d(p, v^{i_0, j_0}) \leq 3$, $d(p, v^{i_0, k_0}) \leq 3$ and $d(p, v^{j_0, k_0}) \leq 3$,
	we see that $\{i, j\} \cap \{i_0, j_0\} \neq \emptyset, \{i, j\} \cap \{i_0, k_0\} \neq \emptyset$ and $ \{i, j\} \cap \{j_0, k_0\} \neq \emptyset$.
	But this is possible only if $\{i, j\} = \{i_0, j_0\}$,  or  $\{i, j\} = \{i_0, k_0\}$ or  $\{i, j\} = \{j_0, k_0\}$. Thus 	$\sigma = \{v, v^1, \ldots, v^n, v^{i_0, j_0}, v^{i_0, k_0}, v^{j_0, k_0}\} = N(v)\cup K_v^{i_0, j_0, k_0}$.
\end{proof}

\begin{lemma}\label{lem:3neighbour}
	Let $n \geq 5$ and $\sigma \in \Delta_n$ be a maximal simplex. If  $|N(w) \cap \sigma| \geq 2$ for all $w \in \sigma$, then there exists $\tilde{v} \in \sigma $ such that 
	$|N(\tilde{v}) \cap \sigma| \geq 3$.
\end{lemma}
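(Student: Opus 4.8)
The plan is to argue by contradiction. Negating the conclusion, together with the hypothesis $|N(w)\cap\sigma|\ge 2$, forces $|N(w)\cap\sigma|=2$ for every $w\in\sigma$; that is, the induced subgraph $\I_n[\sigma]$ is $2$-regular. A $2$-regular graph is a disjoint union of cycles, and since $\I_n$ is bipartite these cycles all have even length; moreover $2$-regularity means they are chordless (induced) cycles. My goal is then to show that such a $\sigma$ can never be maximal, contradicting the hypothesis that $\sigma$ is a maximal simplex.

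First I would bound the length of each cycle. Fix a component $w_0 w_1\cdots w_{2k-1}$ and let $c_i\in[n]$ be the coordinate flipped between $w_i$ and $w_{i+1}$. Chordlessness forces any three consecutive flips $c_i,c_{i+1},c_{i+2}$ to be pairwise distinct (a repeat would produce a chord $w_i\sim w_{i+2}$ or a non-simple cycle). For a cycle of length $2k\ge 8$ the vertices $w_i$ and $w_{i+4}$ lie at hypercube distance equal to the number of coordinates among $c_i,\dots,c_{i+3}$ flipped an odd number of times; by the previous remark the only possible coincidence is $c_i=c_{i+3}$, so this distance is $4$ unless $c_i=c_{i+3}$, and $4>3$ is forbidden. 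Hence $c_i=c_{i+3}$ for all $i$. Analyzing the orbits of $i\mapsto i+3\pmod{2k}$ then gives a contradiction for every $2k\ge 8$: if $3\nmid 2k$ all flips coincide, which is impossible, and if $3\mid 2k$ the flips are periodic of the form $a,b,c,a,b,c,\dots$, which forces $w_6=w_0$ and hence $2k=6$. Thus every component is a $C_4$ (a square $\{v,v^{a},v^{b},v^{a,b}\}$) or a $C_6$ (a hexagon $\{v,v^{a},v^{a,b},v^{a,b,c},v^{b,c},v^{c}\}$).

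It remains to rule out maximality in each case by a short distance computation, and here the genuine obstacle is controlling how distinct components can coexist under the global constraint that all pairwise distances in $\sigma$ are at most $3$. For a $C_4$ component I would show that any vertex $u\notin C_4$ with $d(u,x)\le 3$ for all $x\in C_4$ must be adjacent to some $x\in C_4$; hence no second (non-adjacent) component can exist and $\sigma=C_4$, which is not maximal since $v^{e}$ with $e\notin\{a,b\}$ can be added. For the all-$C_6$ case I would fix one hexagon and compute the set of candidate external vertices, i.e.\ those at distance in $\{2,3\}$ from all six of its vertices; a direct enumeration shows these are exactly the two families $v^{b,e}$ and $v^{a,c,e}$ with $e\notin\{a,b,c\}$. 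The key observation is that no two of these candidates are adjacent, so they cannot contain an edge, let alone a cycle; consequently $\sigma$ can have no second component, $\sigma$ is a single hexagon, and this is again not maximal since the ``missing'' vertex $v^{b}$ lies at distance at most $2$ from all six vertices. In every case $\sigma$ extends to a larger simplex, contradicting maximality, so some $\tilde v\in\sigma$ must satisfy $|N(\tilde v)\cap\sigma|\ge 3$.
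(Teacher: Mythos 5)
Your proof is correct, but it takes a genuinely different route from the paper's. The paper argues locally: starting from a vertex $v$ with $|N(v)\cap\sigma|=2$ and neighbours $v^{i_0},v^{j_0}$, it uses maximality of $\sigma$ (every missing neighbour $v^{p}$ forces a witness in $\sigma$ at distance $\ge 4$ from it) to place $v^{i_0,j_0,k_0}$ and $v^{i_0,j_0,l_0}$ in $\sigma$, and then applies the degree-$\ge 2$ hypothesis to these new vertices, exhibiting an explicit $\tilde v$ of degree $\ge 3$ after a case analysis on where their $\sigma$-neighbours can lie. You argue globally: negating the conclusion makes $\I_n[\sigma]$ $2$-regular, hence a disjoint union of induced even cycles; the scale-$3$ constraint together with your $c_i=c_{i+3}$ orbit argument eliminates all lengths except $4$ and $6$, and a short enumeration of the admissible external vertices shows a union of such squares and hexagons is never maximal. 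Your route buys a cleaner structural fact (any simplex inducing a $2$-regular subgraph is extendable) and trades the paper's neighbour-chasing case analysis for two small distance computations, while the paper's route produces $\tilde v$ directly and matches the style of the surrounding lemmas. Two cosmetic slips, neither of which affects the argument: the chord created by $c_i=c_{i+2}$ is $w_i\sim w_{i+3}$, not $w_i\sim w_{i+2}$, and the claim that three consecutive flips are pairwise distinct holds only for cycles of length $\ge 6$ (a square necessarily has $c_0=c_2$); you only invoke that claim for lengths $\ge 6$, so no harm is done.
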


\begin{proof}
	Let  $|N(w) \cap \sigma| \geq 2$ for all $w \in \sigma$.  If $|N(w) \cap \sigma | \geq 3$ for all $w \in \sigma$, then we are done. So assume that there exists $v \in \sigma $ such that  $|N(v) \cap \sigma | = 2$. There exist  $i_0, j_0 \in [n]$ such that $\{v, v^{i_0}, v^{j_0}\} \subseteq \sigma$.   Since $|N(v) \cap \sigma| = 2$, $v^{i} \notin \sigma$ for all $i \neq  i_0, j_0$. Choose  $p \in [n] \setminus \{i_0, j_0\}$. Since $v^{p} \notin \sigma$ and $\sigma$ is maximal, there exists  $x_0 \in \sigma$ such that $d(x_0, v^{p}) \geq 4$. Observe that for any $u \in V(\I_n)$, if $d(v, u) \leq 2$, then $d(v^{p}, u) \leq 3$. Hence $d(v, x_0) = 3$. 
	Here,  $x_0 = v^{i, j, k}$ for some  $ i, j, k\in [n]$. If $i_0 \notin \{i, j, k\}$, then $d(x_0, v^{i_0}) = 4$, a contradiction as $v^{i_0} \in \sigma$. Hence $i_0 \in \{i, j, k\}$. By similar argument, $j_0 \in \{i, j, k\}$. 
	Thus $x_0 = v^{i_0, j_0, k_0}$ for some $k_0 \in [n]$. If $k_0 = p$, then $d(x_0, v^{p}) = 2$, a contradiction. Hence $k_0 \neq p$. So, $\{v, v^{i_0}, v^{j_0}, v^{i_0,j_0, k_0}\} \subseteq \sigma$.  Since 
	$v^{k_0} \notin \sigma$, there exists  $y_0 \in \sigma$ such that $d(y_0, v^{k_0}) \geq 4$. By an argument similar as above, $d(v, y_0) = 3$ and $y_0 = v^{i_0, j_0, l_0}$ for some $l_0 \in [n]$.  If $l_0 = k_0$, then $d(y_0, v^{k_0}) = 2$, a contradiction. Hence $l_0 \neq k_0$. So, $\{v, v^{i_0}, v^{j_0}, v^{i_0,j_0, k_0}, v^{i_0, j_0, l_0}\} \subseteq \sigma$.  
	Observe that $N(v^{i_0, j_0, l_0}) \cap \{v, v^{i_0}, v^{j_0}, v^{i_0,j_0, k_0}\} = \emptyset$.  Since $|N(v^{i_0, j_0, l_0}) \cap \sigma| \geq 2 $, there exists  $z_0 \in \sigma$ such that $z_0 \sim v^{i_0, j_0, l_0}$. Further, $d(z_0, v) \leq 3$  implies that $z_0 \in \{v^{i_0, j_0}, v^{i_0, l_0}, v^{j_0, l_0}\}$.  We consider the following cases.
	\begin{itemize}
		
		\item[(1)]   $z_0 = v^{i_0, j_0}$.
		
		In this case,  $\{v^{i_0}, v^{j_0}, v^{i_0, j_0, l_0}\} \subseteq N(v^{i_0, j_0} )  \cap \sigma$.  We  take 	$\tilde{v} = v^{i_0, j_0}$. 
		
		\item[(2)]   $z_0 = v^{i_0, l_0}$.
		
		In this case,  $\{v, v^{i_0},  v^{j_0},  v^{i_0,j_0, k_0},   v^{i_0, j_0, l_0}, v^{i_0, l_0}\} \subseteq \sigma$.  Since $|N(v^{i_0,j_0, k_0}) \cap \sigma| \geq 2 $ and $N(v^{i_0, j_0, k_0}) \cap \{v, v^{i_0}, v^{j_0},  v^{i_0, j_0, l_0}, v^{i_0, j_0, k_0}, v^{i_0, l_0}\}  = \emptyset $, there exists $u_0 \in \sigma$ such that $u_0 \sim v^{i_0, j_0, k_0}$.  Now $d(v, u_0) \leq 3$ implies that $u_0 \in \{v^{i_0, j_0}, v^{i_0, k_0}, v^{j_0, k_0}\}$. Since $d(v^{j_0, k_0}, v^{i_0, l_0}) = 4$ and $v^{i_0, l_0} \in \sigma$, we see that  $u_0 \neq v^{j_0, k_0}$. If $u_0 = v^{i_0, j_0}$, then 
		$\{v^{i_0}, v^{j_0}, v^{i_0, j_0, l_0}\} \subseteq N(u_0) \cap \sigma$ and we take $\tilde{v} = u_0$.
		If $u_0 = v^{i_0, k_0}$,  then $\{v, v^{i_0, l_0}, v^{i_0, k_0} \}  \subseteq N(v^{i_0}) \cap \sigma$ and  we  take $\tilde{v} = v^{i_0}$.
		
		\item[(3)]   $z_0 = v^{j_0, l_0}$.
		
		In this case,  $\{v, v^{i_0},  v^{j_0},  v^{i_0,j_0, k_0},   v^{i_0, j_0, l_0}, v^{j_0, l_0}\} \subseteq \sigma$.   Since $|N(v^{i_0}) \cap \sigma| \geq 2 $ and $N(v^{i_0}) \cap \{v, v^{i_0}, v^{j_0}, v^{i_0,j_0, k_0}, v^{i_0, j_0, l_0}, v^{j_0, l_0}\}  = \{v\} $, there exists $w_0 \in \sigma, w \neq v$ such that $w_0 \sim v^{i_0}$. Since $w_0 \neq v$, we see that  $w_0 = v^{i_0, i}$ for some $i \in [n]$. 
		If $i \notin \{j_0, l_0\}$, then $d(w_0,v^{j_0, l_0}) = 4$, a contradiction as $v^{j_0, l_0} \in \sigma$. Hence $i \in \{j_0, l_0\}$. If $i = j_0$, then $w_0= v^{i_0,j_0}$ and $\{v^{i_0}, v^{j_0}, v^{i_0, j_0, l_0}\} \subseteq  N(w_0) \cap \sigma $. We  take $\tilde{v} = w_0$. So, assume that $i = l_0$, {\it i.e.}, $w_0 = v^{i_0, l_0}$.
		
		Here $\{v, v^{i_0},  v^{j_0},  v^{i_0,j_0, k_0},   v^{i_0, j_0, l_0}, v^{j_0, l_0}, v^{i_0, l_0}\} \subseteq \sigma$. Since $|N(v^{i_0,j_0, k_0}) \cap \sigma| \geq 2 $ and $N(v^{i_0, j_0, k_0}) \cap \{v, v^{i_0}, v^{j_0}, v^{i_0,j_0, k_0}, v^{i_0, j_0, l_0}, v^{j_0, l_0}, v^{i_0, l_0}\}  = \emptyset $, there exists $q_0 \in \sigma$ such that $q_0 \sim v^{i_0, j_0, k_0}$.  Since $d(v, q_0) \leq 3$, we see that $q_0 \in \{v^{i_0, j_0}, v^{i_0, k_0}, v^{j_0, k_0}\}$. Further, since  $d(v^{j_0, k_0}, v^{i_0, l_0}) = 4$,  $q_0 \neq v^{j_0, k_0}$. If $q_0= v^{i_0, j_0}$, then 
		$\{v^{i_0}, v^{j_0}, v^{i_0, j_0, l_0}\} \subseteq N(q_0) \cap \sigma$ and  we take $\tilde{v} = q_0$.
		If $q_0 = v^{i_0, k_0}$,  then $\{v, v^{i_0, l_0}, v^{i_0, k_0} \}  \subseteq N(v^{i_0}) \cap \sigma$ and  we take $\tilde{v} = v^{i_0}$.
	\end{itemize}
	This completes the proof.
\end{proof} 

\begin{lemma}\label{lem:subsetneighbour}
	Let $n \geq 5$ and $\sigma \in \Delta_n$ be a maximal simplex such that $\sigma$ covers all places. Let $|N(w) \cap \sigma| \geq 2$ for all $w \in \sigma$. If there exists a $v \in \sigma$ such that $|N(v) \cap \sigma| \geq 3$, then    $N(v)\subseteq \sigma$.  
\end{lemma}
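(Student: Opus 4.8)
The plan is to argue by contradiction and to exploit the strong rigidity that three neighbours of $v$ impose on a maximal simplex at scale $3$. Since $v \in \sigma$, every vertex of $\sigma$ lies within distance $3$ of $v$. Fix three neighbours $v^{i_0}, v^{j_0}, v^{k_0} \in N(v) \cap \sigma$, and suppose towards a contradiction that $N(v) \not\subseteq \sigma$, so that $v^{l_0} \notin \sigma$ for some $l_0 \in [n]$. Because $\sigma$ is maximal, there is $x \in \sigma$ with $d(x, v^{l_0}) \geq 4$.

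The first step is to pin down $x$ completely. Any $u$ with $d(v,u) \leq 2$ satisfies $d(u, v^{l_0}) \leq 3$, so $d(v,x) = 3$ and $x = v^{a,b,c}$ for some $a,b,c \in [n]$; moreover $d(x, v^{l_0}) \geq 4$ forces $l_0 \notin \{a,b,c\}$, for otherwise the distance would drop to $2$. Since $v^{i_0}, x \in \sigma$ we have $d(v^{i_0}, x) \leq 3$, which forces $i_0 \in \{a,b,c\}$, and similarly $j_0, k_0 \in \{a,b,c\}$. Hence $\{a,b,c\} = \{i_0, j_0, k_0\}$ and $x = v^{i_0, j_0, k_0}$; note also that no fourth neighbour of $v$ can lie in $\sigma$, since $v^{d}$ with $d \notin \{i_0,j_0,k_0\}$ would satisfy $d(v^d, x) = 4$.

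The second and decisive step is to locate a vertex of $\sigma$ with too few neighbours. As $\sigma$ covers all places, there is $u \in \sigma$ with $u(l_0) = 1$; since $v^{l_0} \notin \sigma$, either $d(v,u) = 2$ or $d(v,u) = 3$. If $d(v,u) = 3$, then $u = v^{l_0, s, t}$, and $d(u,x) \leq 3$ forces $\{s,t\} \subset \{i_0,j_0,k_0\}$; but then $u$ is at distance $4$ from the remaining neighbour $v^{m}$, where $\{m\} = \{i_0,j_0,k_0\} \setminus \{s,t\}$, contradicting $v^m \in \sigma$. Thus $d(v,u) = 2$, and $d(u,x) \leq 3$ forces $u = v^{l_0, s}$ with $s \in \{i_0,j_0,k_0\}$; after relabelling, $u = v^{l_0, i_0}$. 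Examining $N(u)$, its hypercube neighbours are $v^{i_0} \in \sigma$, the excluded $v^{l_0} \notin \sigma$, and the vertices $v^{l_0, i_0, t}$; the constraint $d(v^{l_0,i_0,t}, x) \leq 3$ forces $t \in \{j_0, k_0\}$, but then $v^{l_0,i_0,t}$ lies at distance $4$ from $v^{k_0}$ or $v^{j_0}$, so none of these is in $\sigma$. Therefore $N(u) \cap \sigma = \{v^{i_0}\}$, contradicting $|N(u) \cap \sigma| \geq 2$, and we conclude $N(v) \subseteq \sigma$.

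I expect the bulk of the effort to be the distance bookkeeping in the two pinning-down steps, while the conceptual crux, and the main obstacle, is recognising that the vertex forced to cover the missing place $l_0$ must be exactly $v^{l_0, i_0}$ — a vertex whose only surviving neighbour in $\sigma$ is $v^{i_0}$. Each of the three hypotheses is used exactly once: $|N(v) \cap \sigma| \geq 3$ to produce and rigidify $x$, the covering-all-places condition to produce $u$, and $|N(w)\cap\sigma| \geq 2$ (applied to $w = u$) to close the contradiction.
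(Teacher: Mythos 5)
Your proof is correct and follows essentially the same route as the paper: maximality pins down $x = v^{i_0,j_0,k_0}$, the covering condition forces a vertex $v^{l_0,i_0}$ into $\sigma$, and the hypothesis $|N(w)\cap\sigma|\geq 2$ applied to that vertex yields the contradiction. The only cosmetic difference is that you rule out all candidate neighbours of $v^{l_0,i_0}$ directly, whereas the paper first posits a second neighbour $z_0$ and then eliminates it; the content is identical.
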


\begin{proof}
	Without loss of generality, we assume that $v = v_1 \ldots v_n$, where $v_i = 0$ for all $i \in [n]$. Suppose $N(v) \not\subseteq \sigma$. Then there exists  $l_0 \in [n]$ such that 
	$v^{l_0} \notin \sigma$. Since $|N(v) \cap \sigma | \geq 3 $, there exist $i_0, j_0, k_0 \in [n] \setminus \{l_0\}$ such that $\{v^{i_0}, v^{j_0}, v^{k_0}\} \subseteq \sigma$.  Further, since $\sigma$ is maximal and  $v^{l_0} \notin \sigma$, there exists $x_0 \in \sigma $ such that $d(x_0, v^{l_0}) \geq 4$. Observe that $d(v, x_0) = 3$ and therefore $x_0 = v^{i, j, k}$ for some $i, j, k \in [n]$. Since $d(x_0, v^{l_0}) \geq 4$, $l_0 \notin \{i, j, k\}$. If $i_0 \notin \{i, j, k\}$, then $d(x_0, v^{i_0}) =4$, a contradiction as $v^{i_0} \in \sigma$.  Hence  $i_0 \in \{i, j, k\}$. By similar arguments, $j_0, k_0 \in \{i_, j, k\}$ and therefore $x_0 = v^{i_0, j_0, k_0}$.
	So, $\{v, v^{i_0}, v^{j_0}, v^{k_0}, v^{i_0, j_0, k_0}\} \subseteq \sigma$.

	Observe that for any $u \in \{v, v^{i_0}, v^{j_0}, v^{k_0}, v^{i_0, j_0, k_0}\}, u(l_0) = 0 $. Since $\sigma$ covers all places, there exists $y_0 \in \sigma$  such that $y_0(l_0) = 1$. Since $v^{l_0} \notin \sigma$, $y_0 \neq v^{l_0}$. Hence $d(v, y_0) \geq 2$. Suppose $d(v, y_0) = 3$. Then $y_0 = v^{l_0, i, j}$ for some $i, j$. If $k \in \{i_0, j_0, k_0\} \setminus \{i, j\}$, then $d(y_0, v^{k}) \geq 4$, a contradiction as $v^{k} \in \sigma$. Hence $d(v, y_0) = 2$. So, $y_0 = v^{l_0, i}$ for some $i \in [n]$. If $i \notin \{i_0, j_0, k_0\}$, then $d(y_0, v^{i_0, j_0, k_0}) \geq 4$. Since $v^{i_0, j_0, k_0} \in \sigma$, we see that $i \in \{i_0, j_0, k_0\}$. Without loss of generality we  assume that $i = i_0$, {\it i.e.}, $y_0 = v^{l_0, i_0}$. So, $\{v, v^{i_0}, v^{j_0}, v^{k_0}, v^{i_0, j_0, k_0}, v^{l_0, i_0}\} \subseteq \sigma$.  
	
	Observe that $N(v^{l_0, i_0}) \cap \{v, v^{i_0}, v^{j_0}, v^{k_0}, v^{i_0, j_0, k_0}, v^{l_0, i_0}\} = \{v^{i_0}\}$. Since $|N(v^{l_0, i_0}) \cap \sigma | \geq 2$, there exists  $z_0 \in \sigma, z_0 \neq v^{i_0}$ such that 
	$z_0 \sim v^{l_0, i_0}$. Further, since $z_0 \neq v^{i_0}$ and $v^{l_0} \notin \sigma$, $z_0 = v^{l_0, i_0, i}$ for some $i \in [n]$. If $i \neq j_0$, then $d(z_0, v^{j_0}) = 4$, a contradiction as $v^{j_0} \in \sigma$. Hence $z_0 = v^{l_0, i_0, j_0}$. But then $d(z_0, v^{k_0}) = 4$, a contradiction. Hence $N(v^{l_0, i_0}) \cap \sigma = \{v^{i_0}\}$, which is a contradiction. 
	
	Thus, we conclude that  $N(v) \subseteq \sigma$.
\end{proof}

\begin{lemma}\label{lem:2ndtypemaximal}
	
	Let $n \geq 5$ and $\sigma \in \Delta_n$ be a maximal simplex such that $\sigma$ covers all places. If  $|N(w) \cap \sigma| \geq 2$ for all $w \in \sigma$, then there exist $v, w \in \sigma $ such that $v \sim w
	$ and $\sigma = N(v) \cup N(w)$. 
	
\end{lemma}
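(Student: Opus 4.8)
The plan is to bootstrap from the two preceding lemmas to the situation where a \emph{full} neighbourhood sits inside $\sigma$, and then to read off the global structure of $\sigma$ from a combinatorial property of the distance-$2$ vertices it contains. First I would invoke \Cref{lem:3neighbour} to produce a vertex $\tilde v \in \sigma$ with $|N(\tilde v)\cap\sigma|\geq 3$, and then \Cref{lem:subsetneighbour} to upgrade this to $N(\tilde v)\subseteq\sigma$. Relabelling coordinates (the hypercube is vertex-transitive), I would assume $v:=\tilde v = v_1\ldots v_n$ with $v_i=0$ for all $i$, so that $\{v\}\cup N(v)\subseteq\sigma$. Since $v,x\in\sigma$ forces $d(v,x)\leq 3$, and since a vertex $x=v^{i,j,k}$ at distance $3$ would satisfy $d(x,v^t)=4$ for any $t\notin\{i,j,k\}$ (with $v^t\in N(v)\subseteq\sigma$), every $x\in\sigma$ in fact satisfies $d(v,x)\leq 2$. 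Hence $\sigma=\{v\}\cup N(v)\cup\{v^{i,j} : \{i,j\}\in\mathcal{F}\}$, where $\mathcal{F}$ is the family of those $2$-subsets $\{i,j\}\subseteq[n]$ with $v^{i,j}\in\sigma$.

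The core of the argument is to pin down $\mathcal{F}$, and I would extract two properties. First, $\mathcal{F}$ is \emph{intersecting}: if $\{i,j\},\{k,l\}\in\mathcal{F}$ were disjoint, then $d(v^{i,j},v^{k,l})=4$, contradicting $\sigma\in\Delta_n$. Second, $\mathcal{F}$ \emph{covers} $[n]$: for each $d\in[n]$ the vertex $v^d\in N(v)\subseteq\sigma$ already has $v$ as a neighbour in $\sigma$, so the hypothesis $|N(v^d)\cap\sigma|\geq 2$ supplies a further neighbour, which can only be some $v^{d,i}\in\sigma$; thus $\{d,i\}\in\mathcal{F}$, so $d$ lies in an edge of $\mathcal{F}$. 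This is exactly the step where the standing assumption $|N(w)\cap\sigma|\geq 2$ is consumed, and it is what rules out the degenerate configurations.

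Finally I would apply the classification of pairwise-intersecting families of $2$-sets: such a family is either a \emph{star} (all edges through a common vertex) or a \emph{triangle} $\{\{a,b\},\{b,c\},\{a,c\}\}$. A triangle covers only $\{a,b,c\}$, three elements, which cannot exhaust $[n]$ when $n\geq 5$; so the covering property forces $\mathcal{F}$ to be a star through some $s$, and covering then forces \emph{every} edge $\{s,i\}$ with $i\neq s$ to lie in $\mathcal{F}$. Therefore $\{v^{i,j} : \{i,j\}\in\mathcal{F}\}=\{v^{s,i} : i\neq s\}$ and $\sigma=\{v\}\cup N(v)\cup\{v^{s,i}:i\neq s\}=N(v)\cup N(v^s)$, so $w:=v^s\in\sigma$ is adjacent to $v$ and gives the claimed decomposition. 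The main obstacle I anticipate is packaging the classification cleanly and making sure the bound $n\geq 5$ is invoked precisely where the triangle is eliminated; a short inline proof that a pairwise-intersecting family of edges is a star or a triangle (by analysing a putative edge avoiding a fixed common vertex) keeps the argument self-contained.
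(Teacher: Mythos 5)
Your proposal is correct. The opening move coincides with the paper's: both invoke \Cref{lem:3neighbour} and then \Cref{lem:subsetneighbour} to produce a vertex $v\in\sigma$ with $N(v)\subseteq\sigma$, and both then observe that every $x\in\sigma$ satisfies $d(v,x)\leq 2$ (a distance-$3$ vertex $v^{i,j,k}$ would be at distance $4$ from some $v^t\in N(v)\subseteq\sigma$). From there the two arguments diverge. The paper stays local: it uses the hypothesis to find a second neighbour $v^{1,i_1}$ of $v^1$ in $\sigma$, then a second neighbour of some $v^{i_2}$ with $i_2\notin\{1,i_1\}$, deduces that this forces three neighbours of $v^1$ or of $v^{i_1}$ into $\sigma$, re-applies \Cref{lem:subsetneighbour} to get a second full neighbourhood $N(w)\subseteq\sigma$, and finally verifies $\sigma\subseteq N(v)\cup N(w)$ by a separate containment check. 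You instead encode the distance-$2$ part of $\sigma$ as a family $\mathcal{F}$ of $2$-subsets of $[n]$, prove it is pairwise intersecting (disjoint pairs give distance $4$) and covering (each $v^d$ needs a second neighbour in $\sigma$, which must be some $v^{d,i}$ --- this is where the standing hypothesis is spent, exactly parallel to the paper), and then apply the star-or-triangle classification of intersecting families of $2$-sets; the triangle is killed by $n\geq 5$ and the covering property forces the full star, which is precisely $N(v^s)\setminus\{v\}$. Your route buys a cleaner global structure theorem --- it avoids the second invocation of \Cref{lem:subsetneighbour} and makes the final containment check unnecessary, since $\sigma$ is identified exactly rather than sandwiched --- at the modest cost of proving the star-or-triangle classification inline. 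Both proofs are sound; all the small verifications in yours (the further neighbour of $v^d$ can only be $v^{d,i}$, a star that covers $[n]$ must contain every edge through its centre, and $\{v\}\cup N(v)\cup\{v^{s,i}:i\neq s\}=N(v)\cup N(v^s)$) check out.
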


\begin{proof}
	
	Using \Cref{lem:3neighbour} and \Cref{lem:subsetneighbour}, we conclude that there exists $v \in \sigma$ such that $N(v) \subseteq \sigma$. Hence $\{v, v^1, \ldots, v^n\} \subseteq \sigma$. 
	Observe that $N(v^1) \cap \{v, v^1, \ldots, v^n\} = \{v\}$.   Since $|N(v^1) \cap \sigma | \geq 2$, there exists $x_0 \in \sigma, x_0 \neq v$ such that $x_0 \sim v^1$. Then $x_0 = v^{1, i_1}$ for some $i_1 \in [n]$. So, $\{v, v^1, \ldots, v^n, v^{1, i_1}\} \subseteq \sigma$.  Choose $i_2 \in [n] \setminus \{1, i_1\}$.  
	
	Observe that   $v^{i_2} \in \sigma$ and $N(v^{i_2}) \cap \{v, v^1, \ldots, v^n, v^{1, i_1}\} = \{v\} $. Since $|N(v^{i_2}) \cap \sigma| \geq 2$, there exists $y_0 \in \sigma, y_0 \neq v$ such that $y_0 \sim v^{i_2}$. Further, since $y_0 \neq v$, we see that 
	$y_0 = v^{i_2, i}$ for some $i \in [n]$. If $i \notin \{1, i_1\}$, then $d(y_0, v^{1, i_1}) =4$, a contradiction as $v^{1, i_1} \in \sigma$.  Hence either $y_0 = v^{i_2, 1}$ or $y_0 = v^{i_2, i_1}$. If  $y_0 = v^{i_2, 1}$, then $\{v, v^{1, i_1}, v^{i_2, 1}\} \subseteq N(v^1) \cap \sigma$. Hence from \Cref{lem:subsetneighbour}, $N(v_1) \subseteq \sigma$. Thus 
	$N(v)\cup N(v^1) \subseteq \sigma$. 
	If  $y_0 = v^{ i_2, i_1}$, then $\{v, v^{1, i_1}, v^{i_2, i_1}\} \subseteq N(v^{i_1}) \cap \sigma$ and therefore  \Cref{lem:subsetneighbour} implies that  $N(v^{i_1}) \subseteq \sigma$. Hence 
	$N(v) \cup N(v^{i_1}) \subseteq \sigma$. 
	
	Thus, we have shown that there exist vertices $v, w \in \sigma$ such that $v \sim w$ and $N(v) \cup N(w) \subseteq \sigma$. We now show that $\sigma \subseteq N(v) \cup N(w)$.	Suppose there exists $z_0 \in \sigma$ such that $z_0 \notin N(v)\cup N(w)$. Since  $w \sim v$, $w = v^{l_0}$ for some $l_0 \in [n]$. Clearly, $d(z_0, v) \geq 2$. Suppose  $d(z_0, v) = 2$. Then  $z_0 = v^{i, j}$ for some $i, j \in [n]$. If $l_0 \in \{i, j\}$, then $z_0 \sim v^{l_0}$, which is a contradiction as $z_0 \notin N(w)$. Hence $l_0 \notin \{i, j\}$. Choose $k_0 \in [n] \setminus \{l_0, i, j\}$. Since $N(v^{l_0})\subseteq \sigma$ and $v^{l_0} \sim v^{l_0,k_0}$, we see that 
	$v^{l_0, k_0} \in \sigma$. But then $d(z_0, v^{l_0, k_0})= 4$, a contradiction. Now let $d(z_0, v) = 3$. Then $z_0 = v^{i, j, k}$ for some $i, j, k$. Choose $p \in [n] \setminus \{i, j, k\}$. Then  $N(v) \subseteq \sigma$ implies that $v^{p} \in \sigma$. But $d(z_0, v^p) = 4$, a contradiction. Thus, we conclude that  $N(v) \cup N(w) = \sigma$. 
\end{proof}

We are now ready to give a characterization of maximal simplices of $\Delta_n$.
Recall that for  $i \in [n]$ and $\epsilon \in \{0, 1\}$, $\I_n^{(i, \epsilon)}$ is the induced subgraph of $\I_n$ on the vertex set $\{v \in V(\I_n) : v(i) = \epsilon \}$.

\begin{lemma}\label{thm:maximalsimplex}
	Let $n \geq 4$ and let $\sigma\in \Delta_n$ be a maximal simplex. Then dim$(\sigma) \in \{7, n+3, 2n-1 \}$. Moreover, if dim$(\sigma) \neq 7$, then either $\sigma = N(v) \cup N(w)$ for some $v \sim w$, or $\sigma = N(u) \cup K_{u}^{i, j, k}$ for some $u$ and $i, j, k \in [n]$.
\end{lemma}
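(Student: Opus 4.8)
The plan is to argue by induction on $n$, handling the maximal simplices that \emph{cover all places} with the preceding lemmas and forcing dimension $7$ on all the others by an extension argument. For the base case $n=4$ I would compute $M(\Delta_4)$ directly: since $\I_4$ has diameter $4$ and the only pairs at distance $4$ are the antipodal pairs $\{v,v^{1,2,3,4}\}$, a subset of $V(\I_4)$ is a simplex of $\Delta_4$ precisely when it contains no antipodal pair. A maximal such set must contain exactly one vertex from each of the $8$ antipodal pairs (otherwise a vertex could be added without creating a distance-$4$ pair), so every maximal simplex of $\Delta_4$ has $8$ vertices and dimension $7$. As $7=n+3=2n-1$ when $n=4$, both the dimension claim and (vacuously) the ``moreover'' clause hold.

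Now let $n\geq 5$ and assume the statement for $\Delta_{n-1}$. Suppose first that $\sigma$ covers all places. By \Cref{lem:neighbour}, $N(v)\cap\sigma\neq\emptyset$ for every $v\in\sigma$, so exactly one of two situations occurs. If some $w\in\sigma$ has $N(w)\cap\sigma$ a singleton, then \Cref{lem:firsttypemaximal} gives $\sigma=N(v)\cup K_v^{i_0,j_0,k_0}$; since $N(v)$ (the $n$ vertices at distance $1$ from $v$) is disjoint from $K_v^{i_0,j_0,k_0}$ (four vertices at distance $0$ or $2$ from $v$), we get $|\sigma|=n+4$ and $\dim\sigma=n+3$. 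Otherwise $|N(w)\cap\sigma|\geq 2$ for all $w$, and \Cref{lem:2ndtypemaximal} gives $\sigma=N(v)\cup N(w)$ with $v\sim w$; writing $w=v^{l_0}$, the set $N(v)$ consists of vertices at distance $1$ from $v$ while $N(w)=N(v^{l_0})$ consists of vertices at distance $0$ or $2$ from $v$, so these are disjoint and $\dim\sigma=2n-1$. Both forms and dimensions are as required, and for $n\geq 5$ each exceeds $7$.

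The remaining case is the heart of the argument: $\sigma$ does not cover all places. Then there is a coordinate $i_0$ and $\epsilon\in\{0,1\}$ with $\sigma\subseteq V(\I_n^{(i_0,\epsilon)})$. Because $\vr{\I_n^{(i_0,\epsilon)}}{3}$ is a full subcomplex of $\Delta_n$ isomorphic to $\Delta_{n-1}$, the maximality of $\sigma$ in $\Delta_n$ forces $\sigma$ to be maximal in $\Delta_{n-1}$. By the induction hypothesis $\dim\sigma\in\{7,\,n+2,\,2n-3\}$, and if $\dim\sigma\neq 7$ then (inside $\I_{n-1}$) either $\sigma=N(v)\cup N(w)$ with $v\sim w$, or $\sigma=N(u)\cup K_u^{i,j,k}$. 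In either form there is a center $c\in\sigma$ ($c=v$ or $c=u$) with $d(c,x)\leq 2$ for all $x\in\sigma$. Every $x\in\sigma$ agrees with $c$ at coordinate $i_0$, so the flip $c^{i_0}$ lies outside $\I_n^{(i_0,\epsilon)}$, hence $c^{i_0}\notin\sigma$, and $d(c^{i_0},x)=1+d(c,x)\leq 3$ for all $x\in\sigma$. Thus $\sigma\cup\{c^{i_0}\}\in\Delta_n$, contradicting maximality of $\sigma$ in $\Delta_n$. Therefore $\dim\sigma=7$, which completes the induction.

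The main obstacle is assembling this last case correctly. One must observe that a maximal simplex that fails to cover all places is genuinely maximal in a hypercube of one lower dimension, so that the full inductive characterization applies; and then one must see that the two ``large'' forms delivered by the induction hypothesis can always be enlarged back inside $\Delta_n$ by toggling the omitted coordinate $i_0$ at the center vertex. Once these two observations are in place, the dimension counts for the covering case and the antipodal analysis of the base case are routine by comparison.
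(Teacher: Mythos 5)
Your proof is correct and follows essentially the same route as the paper's: the same $n=4$ base case (the antipodal-pair count is just the paper's join-of-eight-$S^0$'s decomposition in different words), the same appeal to Lemmas \ref{lem:neighbour}, \ref{lem:firsttypemaximal} and \ref{lem:2ndtypemaximal} when $\sigma$ covers all places, and the same reduction to $\Delta_{n-1}$ with the toggle-the-omitted-coordinate extension argument when it does not. Your write-up of the last step (identifying the center $c$ with $d(c,x)\leq 2$ and checking $d(c^{i_0},x)=d(c,x)+1\leq 3$) is in fact slightly more explicit than the paper's, which simply asserts that $\sigma\cup\{v^l\}$ is a simplex.
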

\begin{proof}
	
	Proof is by induction on $n$. 	Let $n = 4$. For any two vertices $v, w \in V(\I_4)$, let $\overline{\{v, w\}}$ denote a simplicial complex on two vertices, {\it i.e.}, $\overline{\{v, w\}} \cong S^0$. Let $v = 0000$. It is easy to check that 
	$$\Delta_n = \overline{\{v, v^{1, 2, 3, 4}\}} \ast \overline{ \{v^1, v^{2,3, 4}\}}  \ast \overline{\{v^2, v^{1, 3,4}\} } \ast \overline{ \{v^3, v^{1, 2, 4}\}} \ast \overline{\{v^4, v^{1,2,3}\}} \ast \overline{\{v^{1,2}, v^{3, 4}\} }\ast \overline{\{v^{1, 3}, v^{2, 4}\}} \ast \overline{\{v^{1, 4}, v^{2, 3}\} },$$
	the join of $8$-copies of $S^{0}$.	Therefore each maximal simplex of $\Delta_4$ is of dimension $7$. So assume that $n \geq 5$.  Inductively assume  that result is true for each $\vr{\I_r}{3}$, where $4 \leq r < n$. 
	
	Let $\sigma \in \Delta_n$ be a maximal simplex.  Suppose $\sigma $ covers all places. Then from \Cref{lem:neighbour}, $|N(v) \cap \sigma| \geq 1$ for all $v \in \sigma$. If there exists a vertex $w \in \sigma$ such that $N(w) \cap \sigma = \{v\}$, then from \Cref{lem:firsttypemaximal}, $\sigma = N(v)\cup K_{v}^{i_0,j_0, k_0}$ for some $i_0, j_0, k_0 \in [n]$. Clearly dim$(\sigma) =n+3$. If  $|N(v) \cap \sigma| \geq 2$ for all $v \in \sigma$, then  from \Cref{lem:2ndtypemaximal}, there exist $v, w \in \sigma$ such that $v \sim w$ and $\sigma = N(v) \cup N(w)$. It is easy to check that dim$(\sigma) = 2n-1$.
	
	So, assume that $\sigma$ does not covers all places.  There exists $l\in [n]$ such that $v(l) = w(l)$ for all $v, w \in \sigma$. Without loss of generality we assume that $v(l) = 0$ for all $v \in \sigma$. Observe that $\sigma \in  \vr{\I_n^{(l, 0)}}{3}$.   Since $\sigma$ is a maximal simplex in $\Delta_n$, $\sigma$ is maximal in $\vr{\I_n^{(l, 0)}}{3}$. Since $\I_n^{(l, 0)} \cong \I_{n-1}$, by induction hypothesis,  either dim$(\sigma) = 7$ or;  $\sigma = N_{\I_n^{l, 0}}(v) \cup N_{\I_n^{(l, 0)}}(w)$ for some $v, w \in V(\I_n^{(l, 0)}), v \in N_{\I_n^{(l, 0)}}(w) $  or $\sigma = N_{\I_n^{(l, 0)}}(u) \cup K_u^{i, j, k}$ for some $i, j, k \in [n] \setminus \{l\}$. Suppose dim$(\sigma) \neq 7$. Then  either $\sigma = N_{\I_n^{(l, 0)}}(v) \cup N_{\I_n^{(l, 0)}}(w)$ for some $v, w \in V(\I_n^{(l, 0)}), v \in N_{\I_n^{(l, 0)}}(w) $ or $\sigma = N_{\I_n^{(l, 0)}}(v) \cup K_v^{i, j, k}$ for some $i, j, k \in [n] \setminus \{l\}$. In either case $v^l \notin \sigma$ and $\sigma \cup \{v^l\}$ is a simplex in $\Delta_n$, which contradicts the maximality of $\sigma$. Hence dim$(\sigma) = 7$. 
	This completes the proof. 
\end{proof}

\begin{rmk}In \cite{Kleitman1966}, Kleitmann proved that for $n \geq 2l+1$, the largest set family of subsets of $[n]$ with pairwise symmetric difference at most $2l$ contains  no more than 
	$\sum_{t=0}^{l} {n \choose t }$ elements. Hence it gives the maximum possible  size of a maximal simplex of $\vr{\I_n}{2}$ $( l= 1)$ in \Cref{thm:maximal2}   and the maximum possible  size of a maximal simplex  of $\vr{\I_n}{3}$ $(l = 2)$ in \Cref{thm:maximalsimplex}
\end{rmk}

\subsection{Collapsibility}\label{subsec:collapsibility}

In this section, we prove \Cref{thm:collapsibility}. We first establish a few lemmas, which we need to prove \Cref{thm:collapsibility}.

Let $X$ be a topological space and $A$ be a subspace of $X$.  Recall that a {\it retraction} of $X$ onto $A$ is a map $r: X \to A$ such that $r(a) = a$ for all $a \in A$.

\begin{lemma}\label{lem:retraction}
	Let $n > m$ and let $H$ be an $m$-dimensional cube subgraph of $\I_n$. Then there exists a retraction $r: \Delta_n \to \vr{H}{3}$.
\end{lemma}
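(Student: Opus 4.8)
The plan is to realize $r$ as the coordinate projection onto the subcube $H$ and to verify that this projection is a distance non-increasing simplicial map which fixes $V(H)$ pointwise.

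First I would record the explicit description of $H$. Being an $m$-dimensional cube subgraph of $\I_n$, $H$ is obtained by fixing the values of $n-m$ coordinates: there is a subset $S \subseteq [n]$ with $|S| = m$ and a choice $\epsilon_i \in \{0,1\}$ for each $i \in [n] \setminus S$ so that $V(H) = \{ v \in V(\I_n) : v(i) = \epsilon_i \text{ for all } i \notin S\}$, and two vertices of $H$ are adjacent in $H$ exactly when they differ in a single ($S$-)coordinate, whence $H \cong \I_m$. I then define $r$ on vertices by $r(v)(i) = v(i)$ for $i \in S$ and $r(v)(i) = \epsilon_i$ for $i \notin S$; that is, $r$ forces the fixed coordinates back to their $H$-values and leaves the free coordinates untouched. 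By construction $r(v) \in V(H)$, and $r(v) = v$ whenever $v \in V(H)$.

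The key step is to check that $r$ does not increase distance. Using the identity $d(v,w) = \sum_{i=1}^n |v(i) - w(i)|$ recorded in \Cref{section:prelieminaries}, and the fact that $r(v)$ and $r(w)$ agree on every coordinate outside $S$, I compute
\[
d(r(v), r(w)) = \sum_{i \in S} |v(i)-w(i)| \le \sum_{i=1}^n |v(i)-w(i)| = d(v,w).
\]
Consequently, if $\sigma \in \Delta_n$, then every pair $v,w \in \sigma$ satisfies $d(r(v), r(w)) \le d(v,w) \le 3$, so $r(\sigma)$ is a set of vertices of $H$ of pairwise distance at most $3$. Since two vertices of a subcube lie at the same distance in $H$ as in $\I_n$ (an $H$-geodesic between them only flips the $S$-coordinates in which they differ), $r(\sigma)$ is a simplex of $\vr{H}{3}$. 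Hence $r$ is a simplicial map $\Delta_n \to \vr{H}{3}$ and induces a continuous map on geometric realizations.

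Finally, since $r$ fixes every vertex of $H$, it restricts to the identity simplicial map on $\vr{H}{3}$, so the induced map on realizations is a retraction of $\Delta_n$ onto $\vr{H}{3}$, as required. I expect the only point needing care to be the remark that the subcube metric coincides with the ambient metric, which is what guarantees that $\vr{H}{3}$ is genuinely the full subcomplex of $\Delta_n$ induced on $V(H)$; the distance non-increasing property itself is an immediate Hamming-weight computation.
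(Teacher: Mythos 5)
Your proposal is correct and is essentially the paper's argument: the paper also defines the retraction by projecting the fixed coordinates to their prescribed values and verifies the distance non-increasing inequality $d(r(v),r(w))\le d(v,w)$, the only cosmetic difference being that the paper builds $r$ as a composition of single-coordinate projections $\tilde{r}_1,\dots,\tilde{r}_k$ rather than projecting all $n-m$ coordinates at once. Your added remark that the subcube metric agrees with the ambient metric (so $\vr{H}{3}$ really is the induced subcomplex on $V(H)$) is a worthwhile point that the paper leaves implicit.
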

\begin{proof}
	Observe that, there exist sequences $(i_1, \ldots, i_{n-m} )$ and $(\epsilon_1, \ldots, \epsilon_{n-m})$, where $i_1, \ldots, i_{n-m}$ $ \in [n], \epsilon_1, \ldots, \epsilon_{n-m} \in \{0,1\}$ such that $H$ is the induced subgraph of $\I_n$ on the vertex set $\{v \in V(\I_n): v(i_j) = \epsilon_j \ \forall \ 1 \leq j \leq n-m\}$.  
	Define $r_{1}: V(I_n) \to  V(I_n^{i_1, \epsilon_1})$ as follows: for $v \in V(\I_n)$ and $t \in [n]$,
	$$
	r_{1}(v)(t)  = \begin{cases}
		\ v(t)& \text{if} \ t \neq i_1,\\
		\ \epsilon_1 & \text{if} \ t =  i_1. \\
	\end{cases}
	$$
	
	We  extend the map $r_{1}$ to $\tilde{r}_{1} : \Delta_n \to \vr{\I_n^{i_1, \epsilon_1}}{3}$ by 
	$\tilde{r}_{1}(\sigma) := \{r_{1}(v) : v \in \sigma\}$ for all $\sigma \in \Delta_n$. 
	Let $\sigma \in \Delta_n$ and let $v, w \in \sigma$. Then $d(v, w) \leq 3$. If $v(i_1) = w(i_1)$, then  $r_{1}(v) = r_{1}(w)$ and therefore $d(r_{1}(v), r_{1}(w)) = d(v, w)$.  If  $v(i_1) \neq w(i_1)$, then  $d(r_{1}(v),r_{1}(w)) =  d(v, w) -1$. So,  $d(r_{1}(v), r_{1}(w)) \leq d(v, w) \leq 3 $. Thus, $\tilde{r}_{1}(\sigma) \in \vr{\I_n^{i_1, \epsilon_1}}{3}$. Hence  $\tilde{r}_{1}$  is well defined. 
	Clearly $\tilde{r}_{1}$ is onto and  for any $\sigma \in \vr{\I_n^{i_1, \epsilon_1}}{3}$, $\tilde{r}_{1} (\sigma) = \sigma$. Hence $\tilde{r}_{1}$
	is a retraction. If $m=n-1$, then  we take $r = \tilde{r}_{1}$. Suppose $m < n-1$. Let $n-m=k$.  Assume that we have a retraction $\tilde{r}_{k-1} : \Delta_n \to\vr{H_{k-1}}{3} $, where  $H_{k-1}$ is the induced subgraph of $\I_n$ on the vertex set $\{v \in V(\I_n): v(i_j) = \epsilon_j \ \forall \ 1 \leq j \leq k-1\}$.  	Define $r_{k}: V(H_{k-1}) \to  V(H)$ as follows: for $v \in V(H_{k-1})$ and $t \in [n]$,
	$$
	r_{k}(v)(t)  = \begin{cases}
		\ v(t)& \text{if} \ t \neq i_k,\\
		\ \epsilon_k & \text{if} \ t =  i_k. \\
	\end{cases}
	$$
	Extend the map $r_{k}$ to $\tilde{r}_{k} : \vr{H_{k-1}}{3} \to \vr{H}{3}$ by 
	$\tilde{r}_{k}(\sigma) := \{r_{k}(v) : v \in \sigma\}$ for all $\sigma \in \Delta_n$. Clearly,  $\tilde{r}_{k}$ is a retraction. We take $r$ as the composition of the maps $\tilde{r}_k$ and $\tilde{r}_{k-1}$. 
	This completes the proof. 
\end{proof}

\begin{lemma}\label{lem:4neighbour}
	
	Let $n \geq 5$ and $\sigma \in \Delta_n$ be a maximal simplex. If for some $v$, $|N(v) \cap \sigma| \geq 4$, then $N[v] \subseteq \sigma$.
\end{lemma}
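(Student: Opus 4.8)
The plan is to establish the two memberships $v\in\sigma$ and $N(v)\subseteq\sigma$ separately; their conjunction is precisely $N[v]\subseteq\sigma$. First I would normalise by translating with the element of $(\Z/2)^n$ carrying $v$ to $0\ldots0$ — an automorphism of $\I_n$, and hence of $\Delta_n$ — so that each neighbour $v^m$ becomes the singleton $\{m\}$ and every vertex of $\sigma$ is a subset of $[n]$ any two of which have symmetric difference at most $3$. Relabelling, I may take $v^1,v^2,v^3,v^4\in N(v)\cap\sigma$.

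Granting $v\in\sigma$, the inclusion $N(v)\subseteq\sigma$ is quick. Since $v\in\sigma$, writing $x=v^{S}$ gives $|S|=d(v,x)\le 3$ for every $x\in\sigma$. If $v^{l}\notin\sigma$ for some $l$, maximality of $\sigma$ yields $x=v^{S}\in\sigma$ with $|S\triangle\{l\}|\ge 4$; as $|S|\le 3$, the case $l\in S$ is impossible, so $l\notin S$ and therefore $|S|=3$. But $|S\triangle\{m\}|\le 3$ for each neighbour $v^{m}\in\sigma$ forces $m\in S$, so the four indices $1,2,3,4$ all lie in the three-element set $S$ — a contradiction. Hence $N(v)\subseteq\sigma$, and I expect no difficulty here.

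The substantive point is $v\in\sigma$. Suppose instead $v\notin\sigma$. Maximality gives some $x=v^{S}\in\sigma$ with $|S|\ge 4$, and the constraints $|S\triangle\{t\}|\le 3$ against $v^1,\dots,v^4$ force $t\in S$ for each $t$ and $|S|\le 4$, hence $|S|=4$ and $S=\{1,2,3,4\}$; so $|N(v)\cap\sigma|=4$ exactly and $v^{1,2,3,4}$ is the only vertex of $\sigma$ at distance $\ge 4$ from $v$. I would then run $\sigma$ through the classification in \Cref{thm:maximalsimplex}. If $\sigma=N(a)\cup N(b)$ with $a\sim b$, then since $\I_n$ is triangle-free no vertex neighbours both $a$ and $b$, so each $v^{t}$ lies in exactly one of $N(a),N(b)$; three of $v^1,\dots,v^4$ cannot share a common neighbour other than $v$, so the split is $2$–$2$, giving $a=v^{s,t}$ and $b=v^{p,q}$ with $\{s,t\}\sqcup\{p,q\}=\{1,2,3,4\}$ and hence $d(a,b)=4$, contradicting $a\sim b$. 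If $\sigma=N(u)\cup K_u^{i,j,k}$, every vertex of $\sigma$ lies within distance $2$ of $u$, and testing the three possible positions of $v^{1,2,3,4}$ rules this out: it cannot be $u$ (distance $2\ne 4$ to each $v^t$); if it is a neighbour of $u$ then all four $v^{t}$ would have to be the three distance-$2$ vertices of $K_u^{i,j,k}$, which is impossible; and if it is one of those distance-$2$ vertices then each $v^{t}$ is a neighbour $u^{m_t}$ and the relations $u=\{t\}\triangle\{m_t\}$ collapse to incompatible $2$-element sets. Both eliminations are routine bookkeeping.

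This leaves $\dim\sigma=7$, which I expect to be the main obstacle. For $n\ge 5$ such a $\sigma$ cannot cover all places: otherwise \Cref{lem:neighbour} makes every vertex have a neighbour in $\sigma$, and then \Cref{lem:firsttypemaximal} and \Cref{lem:2ndtypemaximal} force $\sigma$ into the form $N(v)\cup K_v^{i,j,k}$ or $N(v)\cup N(w)$, of dimension $n+3$ or $2n-1$, both $\ge 8$. Hence some coordinate $l$ is constant on $\sigma$, and since $v^1,v^2,v^3,v^4,v^{1,2,3,4}$ already realise both values in each of the coordinates $1,2,3,4$, necessarily $l\notin\{1,2,3,4\}$ with $v(l)$ equal to that constant, so $v$ and all four neighbours lie in $\I_n^{(l,v(l))}\cong\I_{n-1}$, in which $\sigma$ is still maximal and still contains four neighbours of $v$. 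The natural attack is induction on $n$ through this subcube, but pinning it down is exactly the delicate part, and I must flag honestly that the induction cannot terminate at $\I_4$: in $\Delta_4$ the configuration completes to a maximal simplex omitting $v$ — for instance $\{v^1,v^2,v^3,v^4,v^{1,2,3,4},v^{1,2},v^{1,3},v^{1,4}\}$, a facet of $\vr{\I_4}{3}\simeq S^7$ — and this facet even lifts to a maximal simplex of $\Delta_5$ by freezing coordinate $5$, so the dimension-$7$ situation with $v\notin\sigma$ genuinely persists for $n=5$. This is precisely the obstacle: forcing $N[v]\subseteq\sigma$ for a general maximal $\sigma$ requires either restricting to $\dim\sigma\ne 7$ or supplying the input $v\in\sigma$ (after which the second paragraph finishes). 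Making the dimension-$7$ case go through — or delimiting exactly the maximal simplices for which it does — is where I would concentrate the effort.
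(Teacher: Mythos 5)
Your proposal is sound in every step it actually carries out, and the ``obstacle'' you flag at the end is not a defect of your approach: it is a counterexample to the lemma as stated, so there is no proof to complete. Writing vertices of $\I_n$ as subsets of $[n]$ with $v=\emptyset$, your simplex $\sigma_0=\{\{1\},\{2\},\{3\},\{4\},\{1,2,3,4\},\{1,2\},\{1,3\},\{1,4\}\}$ has all pairwise symmetric differences at most $3$, and an exhaustive check of the candidates $T\subseteq[n]$ shows it is maximal in $\Delta_n$ for every $n\geq 5$: for instance $T=\emptyset$ and $T=\{5\}$ fail against $\{1,2,3,4\}$, $T=\{2,3\}$ fails against $\{1,4\}$, $T=\{i,j,5\}$ fails against a singleton, and any $T$ meeting $[n]\setminus[4]$ is excluded by the constraints from the four singletons together with $\{1,2,3,4\}$. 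Thus $\sigma_0$ is a maximal simplex with $|N(v)\cap\sigma_0|=4$, yet $v\notin\sigma_0$ and $v^5\notin\sigma_0$, so $N[v]\not\subseteq\sigma_0$. Your two eliminations via \Cref{thm:maximalsimplex} (which is proved before \Cref{lem:4neighbour}, so there is no circularity) are correct and show that the failure is confined to dimension-$7$ facets; your instinct that one must either assume $\dim\sigma\neq 7$ or supply $v\in\sigma$ as a hypothesis is exactly right, and you should present $\sigma_0$ as a disproof rather than as an unresolved difficulty.

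Comparing with the paper: its proof of \Cref{lem:4neighbour} breaks at the assertion ``Hence $d(v,x_0)=3$''. The observation made there only excludes $d(v,x_0)\leq 2$; since $v$ is not known to lie in $\sigma$, the possibility $d(v,x_0)=4$ remains, and $x_0=v^{i_0,j_0,k_0,p_0}$ is a legitimate witness for $v^{l_0}\notin\sigma$, having $d(x_0,v^{l_0})=5$ and $d(x_0,v^{t})=3$ for all $t\in\{i_0,j_0,k_0,p_0\}$ --- exactly the configuration your $\sigma_0$ realizes. (The second half of the paper's proof, deducing $v\in\sigma$ once $N(v)\subseteq\sigma$ is established, is correct, and mirrors your valid ``granting $v\in\sigma$'' reduction in the opposite order.) The lemma does hold under the strengthened hypothesis $|N(v)\cap\sigma|\geq 5$: a witness at distance $4$ from $v$ would then need five distinct indices inside a four-element set, and the paper's argument closes. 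That repaired statement is what \Cref{lem:freepair1} actually uses, since it invokes the lemma with five neighbours of $v$; in the proof of \Cref{thm:collapsibility}, however, the lemma is invoked with exactly four neighbours, and there a separate repair is needed: the maximal simplex $\gamma$ in question precedes an element of $\B_n$ in the chosen order, hence $\gamma=N(a)\cup N(b)$ with $a\sim b$, and since $\I_n$ is bipartite and triangle-free one checks that $|N(v)\cap\gamma|\leq 3$ unless $v\in\{a,b\}$, in which case $N(v)\subseteq\gamma$ trivially. So your blind attempt did not merely diverge from the paper --- it uncovered an error in it, together with the exact boundary ($\geq 5$ versus $=4$) at which the statement becomes true.
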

\begin{proof}
	Let  $|N(v) \cap \sigma| \geq 4$.  Suppose $N(v) \not\subseteq \sigma$. Then there exists a $l_0\in [n]$ such that 
	$v^{l_0} \notin \sigma$. Since $|N(v) \cap \sigma | \geq 4 $, there exist $i_0, j_0, k_0, p_0 \in [n]$ such that $\{v^{i_0}, v^{j_0}, v^{k_0}, v^{p_0}\} \subseteq \sigma$.   Clearly $l_0 \notin \{ i_0, j_0, k_0, p_0\}$. Since $v^{l_0} \notin \sigma$ and $\sigma$ is a maximal simplex,  there exists $x_0 \in \sigma $ such that $d(x_0, v^{l_0}) \geq 4$. Observe that for any vertex  $u$, if $d(v, u) \leq 2$, then $d(u, v^{l_0}) \leq 3$. Hence $d(v, x_0) = 3$. Here, $x_0 = v^{i, j, k}$ for some $i, j, k \in [n]$. If $i_0 \notin \{i, j, k\}$, then $d(x_0, v^{i_0}) =4$, a contradiction as $v^{i_0} \in \sigma$.  Hence 
	$i_0 \in \{i, j, k\}$. By similar arguments, we can show that $j_0, k_0, p_0 \in \{i, j, k\}$. Hence $\{i_0, j_0, k_0, p_0\} \subseteq \{i, j, k\}$, which is not possible.   Thus,
	$N(v) \subseteq \sigma$. 
	
	Suppose $v \notin \sigma$, then there exists a vertex $y_0 \in \sigma$ such that $d(v, y_0) \geq 4$. 
	Suppose $d(v, y_0) = 4$. Let $y_0 = v^{i, j, k , l}$. Since $n \geq 5$, there exists  $t \in [n] \setminus \{i, j, k , l\}$. Then $d(y_0, v^t) \geq4$, a contradiction as $v^t \in \sigma$. Hence $d(v, y_0) \geq 5$.  But then $d(v^{i_0}, y_0) \geq 4$, again a contradiction. Hence $v \in \sigma$. Thus, $N[v] \subseteq \sigma$.
\end{proof}

\begin{lemma}\label{lem:freepair1}
	Let $n \geq 5$ and let $\sigma \in \Delta_n$ be a maximal simplex. Suppose there exists a vertex  $v$ such that 
	$\{v^{i_0, j_0}, v^{i_0, k_0}, v^{j_0, k_0}, v^{p_0}, v^{q_0}\} \subseteq \sigma$, where $p_0, q_0 \notin \{i_0, j_0, k_0\}$. Then 
	$\sigma = N(v)\cup K^{i_0,j_0,k_0}_v$.  
	
\end{lemma}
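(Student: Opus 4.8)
The plan is to prove the containment $\sigma \subseteq N(v) \cup K_v^{i_0,j_0,k_0}$ and then conclude by maximality. First I would observe that $T := N(v) \cup K_v^{i_0,j_0,k_0}$ is itself a simplex of $\Delta_n$: indeed $d(v, v^t) = 1$, $d(v^t, v^{t'}) = 2$, $d(v, v^{a,b}) = 2$, any two of the three pairs in $K_v^{i_0,j_0,k_0}$ are at distance $2$, and $d(v^t, v^{a,b}) \le 3$ for every $t \in [n]$ and every pair $\{a,b\}$. Since all pairwise distances are at most $3$, we have $T \in \Delta_n$; hence once $\sigma \subseteq T$ is shown, the maximality of $\sigma$ gives $\sigma = T$, which is the assertion. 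As usual I would take $v = v_1 \cdots v_n$ with $v_i = 0$ for all $i$ and identify vertices with their supports, so that $d(\cdot,\cdot)$ becomes the cardinality of a symmetric difference.

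For the containment, fix $x \in \sigma$ with support $S \subseteq [n]$, set $A = \{i_0, j_0, k_0\}$, and split $S$ as $S_A = S \cap A$ and $S' = S \setminus A$, writing $m = |S'|$. Since $\{i_0,j_0\}, \{i_0,k_0\}, \{j_0,k_0\} \subseteq A$ are disjoint from $S'$, the three hypotheses $d(x, v^{i_0,j_0}), d(x, v^{i_0,k_0}), d(x, v^{j_0,k_0}) \le 3$ become
\[ m + |S_A \triangle \{i_0,j_0\}| \le 3, \qquad m + |S_A \triangle \{i_0,k_0\}| \le 3, \qquad m + |S_A \triangle \{j_0,k_0\}| \le 3. \]
A routine inspection of the subsets $S_A \subseteq A$ shows that the largest of the three symmetric-difference sizes is $2$ when $|S_A| \in \{0,2\}$, is $3$ when $|S_A| = 1$, and is $1$ when $|S_A| = 3$. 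Thus these three inequalities already force $m = 0$ if $|S_A| = 1$, $m \le 1$ if $|S_A| \in \{0,2\}$, and $m \le 2$ if $|S_A| = 3$.

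The main obstacle, and the only place the two extra neighbours enter, is to discard the surviving configurations in which $S$ leaves $A$, namely $|S_A| = 3$ and the case $|S_A| = 2$ with $m = 1$. Here I would use $p_0, q_0 \notin A$ with $p_0 \ne q_0$, so that $d(x, v^{p_0}) = |S_A| + |S' \triangle \{p_0\}|$ and similarly for $q_0$. If $|S_A| = 3$, then $d(x, v^{p_0}) \le 3$ forces $S' = \{p_0\}$, whence $d(x, v^{q_0}) = 3 + |\{p_0\} \triangle \{q_0\}| = 5$, a contradiction; so $|S_A| = 3$ is impossible. If $|S_A| = 2$ and $m = 1$, write $S' = \{t\}$: when $t \ne p_0$ we get $d(x, v^{p_0}) = 2 + 2 = 4$, and when $t = p_0$ we get $d(x, v^{q_0}) = 2 + 2 = 4$, a contradiction in either case, so in fact $m = 0$. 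What remains is exactly $S = \emptyset$ (so $x = v$), $|S| = 1$ (so $x \in N(v)$), or $S$ equal to one of $\{i_0,j_0\}, \{i_0,k_0\}, \{j_0,k_0\}$ (so $x \in K_v^{i_0,j_0,k_0}$). Hence every $x \in \sigma$ lies in $N(v) \cup K_v^{i_0,j_0,k_0}$, giving $\sigma \subseteq N(v) \cup K_v^{i_0,j_0,k_0}$, and maximality finishes the proof.
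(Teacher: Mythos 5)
Your proof is correct, but it runs in the opposite direction from the paper's. The paper establishes the containment $N(v)\cup K_v^{i_0,j_0,k_0}\subseteq\sigma$: it first shows $v^{i_0},v^{j_0},v^{k_0}\in\sigma$ by contradiction arguments, then invokes \Cref{lem:4neighbour} (five neighbours of $v$ in $\sigma$ force $N[v]\subseteq\sigma$) and finally cites \Cref{lem:firsttypemaximal} to assert that $N(v)\cup K_v^{i_0,j_0,k_0}$ is itself a maximal simplex, so the containment is an equality. You instead prove $\sigma\subseteq N(v)\cup K_v^{i_0,j_0,k_0}$ directly, by a clean symmetric-difference classification of the possible supports $S$ of a vertex of $\sigma$ (splitting $S$ over $A=\{i_0,j_0,k_0\}$ and its complement, and using $v^{p_0},v^{q_0}$ only to kill the configurations with $|S\cap A|=3$ or $|S\cap A|=2$, $|S\setminus A|=1$), check that $N(v)\cup K_v^{i_0,j_0,k_0}$ is a face of $\Delta_n$, and conclude from the maximality of $\sigma$. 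I verified your case analysis: the maxima of the three symmetric differences over $S_A\subseteq A$ and the two exclusion computations with $v^{p_0},v^{q_0}$ (using $p_0\neq q_0$, which is implicit in the five-element hypothesis) are all correct, and the surviving supports are exactly those of $N(v)\cup K_v^{i_0,j_0,k_0}$. Your route is self-contained and shorter, avoiding any dependence on \Cref{lem:4neighbour} and \Cref{lem:firsttypemaximal}; it also yields, as a byproduct, that $N(v)\cup K_v^{i_0,j_0,k_0}$ is maximal whenever it contains a simplex of the stated form. The paper's route buys reuse of machinery already developed for the characterization of maximal simplices, at the cost of leaning on those earlier lemmas.
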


\begin{proof}
	We first show that $v^{i_0} \in \sigma$. 
	If $v^{i_0} \notin \sigma$, then there exists  $y_0 \in \sigma$ such that 
	$d(v^{i_0}, y_0) \geq 4$. Observe that $d(v, y_0) \geq 3$.  We have the following two cases. 
	\begin{itemize}
		\item[(1)] $d(v, y_0) = 3$.
		
		Here, $y_0 = v^{i,j, k}$ for some $i, j, k \in [n]$. Since $d(y_0, v^{p_0}) \leq 3$ and $d(y_0, v^{q_0}) \leq 3$, we see that $p_0, q_0 \in \{i, j, k\} $. Without loss of generality we assume that 
		$i= p_0$ and $j = q_0$, {\it i.e.}, $y_0 = v^{p_0, q_0, k}$.   Then either $d(y_0, v^{i_0, j_0}) \geq 4$, or  $d(y_0, v^{i_0, k_0}) \geq 4$, or  $d(y_0, v^{j_0, k_0}) \geq 4$, a contradiction as $v^{i_0, j_0}, v^{i_0, k_0}, v^{j_0, k_0} \in \sigma$. 
		
		\item[(2)] $d(v, y_0) \geq 4$.
		
		Observe that if $d(v, y_0) \geq 5$, then $d(y_0, v^{p_0}) \geq 4$, which is not possible, since $y_0, v^{p_0} \in \sigma$. Hence  $d(v, y_0) = 4$.	There exist $i, j, k, l \in [n]$ such that  $y_0 = v^{i,j, k, l}$.  Since $d(y_0, v^{p_0}) \leq3$ and $d(y_0, v^{q_0}) \leq 3$, we see that $p_0, q_0 \in \{i, j, k, l\}$.  Further, since $d(v^{i_0}, y_0) \geq 4 $, $i_0 \notin \{i, j, k, l\}$. If $\{j_0, k_0\} \not\subseteq \{i, j, k, l\}$, then $d(y_0, v^{j_0, k_0}) \geq 4$, a contradiction as $v^{j_0, k_0} \in \sigma$. Hence $y_0 = v^{p_0, q_0, j_0, k_0}$. But then $d(y_0, v^{i_0, j_0}) \geq 4$, a contradiction.

	\end{itemize}
	
	Hence $v^{i_0} \in \sigma$. By similar arguments  $v^{j_0}, v^{k_0} \in \sigma$. 
	Since $\{v^{i_0}, v^{j_0}, v^{k_0}, v^{p_0}, v^{q_0}\} \subseteq N(v) \cap \sigma $, from \Cref{lem:4neighbour}, $N[v] \subseteq \sigma$. Hence $ N(v)\cup K^{i_0,j_0,k_0}_v \subseteq \sigma$. From \Cref{lem:firsttypemaximal},  $ N(v)\cup K^{i_0,j_0,k_0}_v$ is a  maximal simplex and therefore  $\sigma =  N(v) \cup K^{i_0,j_0,k_0}_v$.
\end{proof}

Inspired by  \Cref{thm:maximalsimplex}, we write the set of maximal simplices of $\Delta_n$,
$M(\Delta_n) = \A_n \cup \B_n \cup \C_n$, where 
\begin{align*}
	\A_n & = \{\sigma \in M(\Delta_n) : \sigma =N(v) \cup  K_v^{i, j, k} \ \text{for some} \ v \in V(\I_n) \ \text{and} \ i, j, k \in [n]\}, \\
	\B_n & = \{\sigma \in M(\Delta_n) : \sigma = N(v) \cup N(w) \ \text{for some} \ v, w  \in V(\I_n), v \sim w\} \ \text{and}\\
	\C_n & = M(\Delta_n) \setminus  (\A_n \cup \B_n).
\end{align*}

\begin{lemma}\label{lem:removingn3}
	
	Let $n \geq 5$. Then by using a sequence of elementary $8$-collapses, $\Delta_n$ collapses to a subcomplex $\Delta_n'$, where $M(\Delta_n') = \B_n  \cup \C_n \cup \{ K_v^{i, j, k} \cup \{ v^{i}, v^{j}, v^{k}, v^l\} : v \in V(\I_n), \{i, j, k, l \} \subseteq  [n] \}$.
\end{lemma}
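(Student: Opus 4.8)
The plan is to show that the only faces of $\Delta_n$ destroyed in passing to $\Delta_n'$ are those buried deep inside the simplices of $\A_n$, and to delete them by elementary collapses whose free faces are supplied by \Cref{lem:freepair1}. Fix $\sigma=N(v)\cup K_v^{i,j,k}\in\A_n$ and write $C_\sigma=\{v,v^i,v^j,v^k\}$, $T_\sigma=\{v^{i,j},v^{i,k},v^{j,k}\}$ (the ``triangle''), and $E_\sigma=\{v^l:l\in[n]\setminus\{i,j,k\}\}$ (the ``extra'' neighbours), so that $\sigma=C_\sigma\cup T_\sigma\cup E_\sigma$. Set $R_\sigma=\{\tau:T_\sigma\subseteq\tau\subseteq\sigma,\ |\tau\cap E_\sigma|\ge 2\}$. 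The first step is to prove that $\Delta_n\setminus\Delta_n'=\bigcup_{\sigma\in\A_n}R_\sigma$; note $K_v^{i,j,k}\cup\{v^i,v^j,v^k,v^l\}=C_\sigma\cup T_\sigma\cup\{v^l\}$, so the third family in the statement is the ``one extra neighbour'' part of $\sigma$.

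For this identification I would argue as follows. If $\tau\in R_\sigma$ then $\tau$ contains $T_\sigma$ together with at least two elements of $E_\sigma$, so by \Cref{lem:freepair1} the \emph{unique} maximal simplex of $\Delta_n$ containing $\tau$ is $\sigma$; hence $\tau$ lies in no member of $\B_n\cup\C_n$, nor in any $C_\sigma\cup T_\sigma\cup\{v^l\}$ (which carries only one extra neighbour), so $\tau\notin\Delta_n'$, and simultaneously the sets $R_\sigma$ are pairwise disjoint and no $R_{\sigma'}$ meets a face of a different maximal simplex. Conversely, a face $\tau\subseteq\sigma$ with $|\tau\cap E_\sigma|\le 1$ lies in some $C_\sigma\cup T_\sigma\cup\{v^l\}$, while a face with $|\tau\cap E_\sigma|\ge 2$ and $T_\sigma\not\subseteq\tau$ contains at most two triangle vertices, which share a common index $i$ (any $i$ if none), so that $\tau\subseteq N(v)\cup N(v^i)$. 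The latter is a $(2n-1)$-simplex, hence of top dimension for $n\ge 5$ by \Cref{thm:maximalsimplex}, hence maximal, hence in $\B_n$. Thus every surviving face is retained, and one reads off that the maximal simplices of $\Delta_n\setminus\bigcup R_\sigma$ are exactly $\B_n\cup\C_n\cup\{K_v^{i,j,k}\cup\{v^i,v^j,v^k,v^l\}\}$.

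It remains to realise the deletion of $\bigcup R_\sigma$ by elementary $8$-collapses. By the disjointness above I would treat each $\sigma$ independently. Order the extras $e_1,\dots,e_{n-3}$ and process the pairs $(p,q)$ with $p<q$ in lexicographic order; at step $(p,q)$ take the free face $\gamma=T_\sigma\cup\{e_p,e_q\}$, of size $5\le 8$. The key claim is that in the partially collapsed complex the unique maximal simplex containing $\gamma$ is $\mu_{(p,q)}=C_\sigma\cup T_\sigma\cup\{e_p,e_q\}\cup\{e_r:r>q\}$: by \Cref{lem:freepair1} every coface of $\gamma$ sits inside $\sigma$, and a short lexicographic bookkeeping shows that a coface $\tau\supseteq\gamma$ survives to step $(p,q)$ precisely when its lexicographically smallest extra pair equals $(p,q)$, i.e. when $\tau\subseteq\mu_{(p,q)}$. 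The elementary collapse along $[\gamma,\mu_{(p,q)}]$ then removes exactly the faces of $R_\sigma$ whose smallest extra pair is $(p,q)$, so running over all pairs, and then over all $\sigma\in\A_n$, deletes $\bigcup R_\sigma$ and nothing else.

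The main obstacle is the free-face verification: at each elementary collapse one must check that $\gamma$ is contained in a unique maximal simplex of the \emph{current} (partially collapsed) complex, and this is exactly the point where \Cref{lem:freepair1} (uniqueness of the maximal simplex through a full triangle plus two extra neighbours) must be combined with the lexicographic order to pin down $\mu_{(p,q)}$. A secondary but necessary bookkeeping step is to confirm that the collapse produces no spurious maximal faces; in particular each $K_v^{i,j,k}\cup\{v^i,v^j,v^k,v^l\}$ must become maximal, which holds because any proper enlargement would contain a second extra neighbour and therefore lie in $R_\sigma$ and have been removed. Together these identify $M(\Delta_n')$ exactly as claimed.
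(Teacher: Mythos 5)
Your proposal is correct and takes essentially the same route as the paper: both rely on \Cref{lem:freepair1} to supply free faces of the form $T_\sigma\cup\{e_p,e_q\}$ (the full triangle $\{v^{i,j},v^{i,k},v^{j,k}\}$ plus two extra neighbours of $v$), observe that every coface of such a face lies inside the single simplex $\sigma\in\A_n$, and collapse away exactly the faces containing the triangle and at least two extras, leaving the one-extra pieces $K_v^{i,j,k}\cup\{v^i,v^j,v^k,v^l\}$ as new maximal simplices. The only difference is bookkeeping: the paper inducts on the number of extras already deleted, passing through intermediate complexes $X_1,\dots,X_{n-4}$ whose maximal simplices are $\sigma$ minus growing sets of extras, whereas you identify the removed set $R_\sigma$ up front and order the collapses lexicographically by the pair of extras in the free face; both orderings produce the same unique maximal cofaces and the same terminal complex $\Delta_n'$.
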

\begin{proof}
	
	Let $\sigma  \in \A_n$. Then $\sigma= N(v) \cup K^{i_0, j_0, k_0}_v$ for some $v \in V(\I_n)$ and $i_0, j_0, k_0 \in [n]$.  We will use the following claim.
	
	\begin{claim} \label{claim:collapsing}
		$\Delta_n$ collapses to a subcomplex $X$, where the set of maximal simplices 
		$$M(X) = M(\Delta_n) \setminus \{\sigma\} \cup \{ K_v^{i_0, j_0, k_0} \cup \{ v^{i_0}, v^{j_0}, v^{k_0},  v^i\} : i \in [n] \setminus \{i_0,j_0, k_0\} \}.$$
	\end{claim}
	\begin{proof}[Proof of \Cref{claim:collapsing}]
		
		Without loss of generality we  assume that $\{i_0, j_0, k_0\} = \{1, 2, 3\}$.  From \Cref{lem:freepair1},  ($ \{v^{1, 2}, v^{1, 3}, v^{2, 3}, v^{4}, v^{5}\}, \sigma$) is a collapsible pair.  Thus, $\sigma \searrow  \sigma \setminus \{v^{4}\}, \sigma\setminus \{v^{5}\}, \sigma\setminus \{v^{1, 2}\},  \sigma\setminus \{v^{1, 3}\},  \sigma\setminus \{v^{2, 3}\}$. Observe that 
		$\sigma\setminus \{v^{1, 2}\} \subseteq N(v) \cup N(v^{3}),  \sigma\setminus \{v^{1, 3}\} \subseteq N(v)\cup N(v^{2}), \sigma\setminus \{v^{2, 3}\} \subseteq N(v) \cup N(v^{1})$. From \Cref{thm:maximalsimplex}, for any $u  \sim w$, $N(u) \cup N(w)$ is a maximal simplex in $\Delta_n$ and therefore we see that  $\Delta_n $ collapses to a subcomplex $X_1$, where $M (X_1) = M(\Delta_n) \setminus \{\sigma\} \cup \{\sigma \setminus \{v^4\}, \sigma \setminus \{v^{5}\}\}$.
		
		Hence claim  is true if $n= 5$.  So assume that $n \geq 6$.
		From \Cref{lem:freepair1}, $( \{v^{1, 2}, v^{1, 3}, v^{2, 3}, v^{5}, v^{6}\}, $ $ \sigma)$ and $( \{v^{1, 2}, v^{1, 3}, v^{2, 3}, v^{4}, v^{6}\}, \sigma)$  are collapsible  pairs in $\Delta_n$. Hence  $(\{v^{1, 2}, v^{1, 3}, v^{2, 3}, v^{5}, v^{6}\}, \sigma \setminus \{v^4\})$ and $( \{v^{1, 2}, v^{1, 3}, v^{2, 3}, v^{4}, v^{6}\}, \sigma \setminus \{v^{5}\})$ are collapsible  pairs in $X_1$. Thus, $\sigma \setminus \{v^4\} \searrow \sigma \setminus \{v^{4}, v^{6}\}, \sigma\setminus \{v^4, v^{5}\},  \sigma\setminus \{v^4, v^{1, 2}\}, \sigma\setminus \{v^4, v^{1, 3}\}, \sigma\setminus \{v^4, v^{2, 3}\}$ and $\sigma \setminus \{v^{5}\} \searrow \sigma \setminus \{v^{5}, v^{4}\}, \sigma\setminus \{v^{5}, v^{6}\},  \sigma\setminus \{v^{5}, v^{1, 2}\}, \sigma\setminus \{v^{5}, v^{1, 3}\}, \sigma\setminus \{v^{5}, v^{2, 3}\}$. 
		
		Observe that $\sigma\setminus \{v^{4}, v^{1, 2}\}, \sigma\setminus \{v^{5}, v^{1, 2} \}\subseteq N(v) \cup N(v^{3})$,  $\sigma\setminus \{v^{4}, v^{1, 3}\},  \sigma\setminus \{v^{5}, v^{1, 3}\} \subseteq N(v) \cup N(v^{2})$,  $\sigma\setminus \{v^{4}, v^{2, 3}\},  \sigma\setminus \{v^{5}, v^{2, 3}\} \subseteq N(v) \cup N(v^{1})$. Therefore, we conclude that 
		$X_1 $   collapses to the subcomplex $X_2$, where $M (X_2) = M(\Delta_n) \setminus \{\sigma \} \cup \{\sigma \setminus \{v^4, v^{5}\}, \sigma \setminus \{v^{4}, v^{6}\}, \sigma \setminus \{v^{5}, v^{6}\}\}$. 
		
		Hence the  claim  is true if  $n= 6$.  Let $n \geq 7$  and inductively assume that $\Delta_n$ collapses to the subcomplex $X_{n-5}$, where $$M (X_{n-5}) = M(\Delta_n) \setminus \{\sigma \} \cup \{\sigma \setminus \{v^{l_1}, v^{l_2}, \ldots , v^{l_{n-5}} \}: \{l_1, l_2, \ldots, l_{n-5} \} \subseteq \{4, 5, \ldots, n-1\} \} .$$ 
		
		Let $\{i_1, i_2, \ldots, i_{n-5} \} \subseteq \{4, 5, \ldots, n-1\}$. Observe that $|\{4, 5, \ldots, n-1, n\} \setminus  \{i_1, i_2, \ldots, i_{n-5} \}| = 2$ and $n \in \{4, 5, \ldots, n-1, n\} \setminus  \{i_1, i_2, \ldots, i_{n-5} \}$.
		Let $\{n, p\} =  \{4, 5, \ldots, n-1, n\} \setminus  \{i_1, i_2, \ldots, i_{n-5} \} $.  Using \Cref{lem:freepair1}, we observe that  $\sigma \setminus \{v^{i_1}, \ldots, v^{i_{n-5}}\}$ is the only maximal simplex in $X_{n-5}$, which contains $ \{v^{1, 2}, v^{1, 3}, v^{2, 3},v^{p}, v^{n}\}$. So, $(\{v^{1, 2}, v^{1, 3}, v^{2, 3}, v^{p}, v^{n}\} , \sigma \setminus  \{v^{i_1},  \ldots , v^{i_{n-5}} \})$ is a collap-sible pair in $X_{n-5}$. 
		Therefore
		$\sigma \setminus  \{v^{i_1},  \ldots , v^{i_{n-5}} \} \searrow \sigma \setminus \{v^{i_1},  \ldots , v^{i_{n-5}}, v^{p}\}, \sigma \setminus \{v^{i_1},  \ldots , v^{i_{n-5}}, v^{n}\},  $ $\sigma \setminus \{v^{i_1},  \ldots , v^{i_{n-5}}, v^{1, 2}\},  \sigma \setminus \{v^{i_1},  \ldots , v^{i_{n-5}}, v^{1, 3}\},  \sigma \setminus \{v^{i_1},  \ldots , v^{i_{n-5}}, v^{2, 3}\}$. Clearly, 
		$ \sigma\setminus \{v^{i_1}, \ldots , v^{i_{n-5}}, $ $v^{1, 2}\} \subseteq N(v) \cup N(v^{3})$, 	$ \sigma\setminus \{v^{i_1}, \ldots , v^{i_{n-5}}, v^{1, 3}\} \subseteq N(v) \cup N(v^{2})$, 	$ \sigma\setminus \{v^{i_1}, \ldots , v^{i_{n-5}}, v^{2, 3}\} \subseteq N(v)\cup N(v^{1})$. Thus, we conclude that 
		$X_{n-5}$ collapses to a subcomplex $X_{n-5}'$, where  
		\begin{align*}
			M (X_{n-5}')  = &M(\Delta_n) \setminus \{\sigma \} \cup\{ \sigma \setminus \{v^{i_1},  \ldots , v^{i_{n-5}}, v^{p}\}, \sigma \setminus \{v^{i_1},  \ldots , v^{i_{n-5}}, v^{n}\} \} \cup \\
			& \{\sigma \setminus \{v^{l_1}, v^{l_2}, \ldots , v^{l_{n-5}} \}: \{l_1, l_2, \ldots, l_{n-5} \} \subseteq \{4, 5, \ldots, n-1\}  \setminus \{i_1, \ldots, i_{n-5}\}\}.
		\end{align*}
		By applying  an argument similar as above for each  $\{l_1, l_2, \ldots, l_{n-5}\} \subseteq \{4, 5, \ldots, n-1\}$, we get that  
		$X_{n-5}$   collapses to the subcomplex $X_{n-4}$, where 
		$$M (X_{n-4}) = M(\Delta_n) \setminus \{\sigma \} \cup \{\sigma \setminus \{v^{l_1}, v^{l_2}, \ldots , v^{l_{n-4}} \}: \{l_1, l_2, \ldots, l_{n-4} \} \subseteq \{4, 5, \ldots, n\} \}. $$
		Observe that $\{\sigma \setminus \{v^{l_1}, v^{l_2}, \ldots , v^{l_{n-4}} \}: \{l_1, l_2, \ldots, l_{n-4} \} \subseteq \{4, 5, \ldots, n\} \}  = \{K_v^{1, 2,3 } \cup \{v^{1}, v^{2}, v^{3},  v^i\} : i \in [n] \setminus \{1,2,3\} \}$.
		Thus, by induction we get that 
		$\Delta_n$ collapses to a subcomplex $X_{n-4}$, where $$M(X_{n-4}) = M(\Delta_n) \setminus \{\sigma\} \cup \{K^{1, 2, 3}_v \cup \{v^{1}, v^{2}, v^{3}, v^i\} : 4 \leq i \leq n \}.$$
		We take $X = X_{n-4}$ and this completes the  proof of \Cref{claim:collapsing}. 
	\end{proof}
	
	By applying the \Cref{claim:collapsing} for each $\tau \in \A_n$, we get that 
	$\Delta_n$ collapses to a subcomplex $\Delta_n'$, where $M(\Delta_n') = \B_n  \cup \C_n \cup \{K^{i, j, k}_v \cup \{v^{i}, v^{j}, v^{k}, v^l\} : v \in V(\I_n), \{i, j, k, l \} \subseteq  [n] \}$.
\end{proof}	

We are now ready to prove main theorem of this section. 


\begin{proof}[Proof of \Cref{thm:collapsibility}]
	We need to show that the collapsibility number of $\vr{\I_n}{3}$ is $8$. We first show that $\Delta_n$ is $8$-collapsible. It is easy to check that each maximal simplex of $\Delta_4$ is of dimension $7$. Hence $\Delta_4$ is $8$-collapsible. So assume that $n \geq 5$. 	From \Cref{lem:removingn3}, by using elementary $8$-collapses, $\Delta_n$ collapses to a subcomplex $\Delta_n'$, where $M(\Delta_n') = \B_n  \cup \C_n \cup \{K^{i, j, k}_v \cup \{v^{i}, v^{j}, v^{k}, v^l\} : v \in V(\I_n), \{i, j, k, l \} \subseteq  [n] \}$.  Let $\D_n = \{K^{i, j, k}_v \cup \{v^{i}, v^{j}, v^{k}, v^l\} : v \in V(\I_n), \{i, j, k, l \} \subseteq  [n] \}$.   Since $n \geq 5$, by using  the cardinalities of the elements of $M(\Delta_n')$, we conclude that $M(\Delta_n')= \B_n \sqcup \C_n \sqcup D_n$.

	Choose a linear order $\prec_1$ on elements of $\B_n$.   Extend $\prec_1$ to a linear order $\prec$ on maximal simplices  of  $\Delta_n'$, where elements of $\B_n$  are ordered first, {\it i.e.}, for any two $\sigma_1, \sigma_2 \in M(\Delta_n')$,  if $\sigma_1 \in \B_n$  and $\sigma_2 \in \C_n \cup \D_n$, then  $\sigma_1 \prec \sigma_2$. 
	Let $\tau \in \Delta_n'$.  Let $\sigma$ be the smallest (with respect to $\prec$) maximal simplex of $\Delta_n'$ such that $\tau \subseteq \sigma$.   If $\sigma \in \C_n \cup \D_n$, then  $|\sigma| = 8$ and thereby implying that  $|M_{\prec}(\tau)| \leq 8$.  So assume that  $\sigma \in \B_n$. There exist $v, w \in V(\I_n)$ such that $v \sim w$ and   $\sigma = N(v) \cup N(w)$. We first prove that $|M_{\prec} (\tau) \cap N(v) | \leq 4$.
	
	Let  $\mes_{\prec}(\tau) = (x_1, \ldots, x_t)$. Suppose $|M_{\prec}(\tau) \cap N(v)| \geq 5$. Let $k$ be the least integer  such that $|\{x_1, \ldots, x_k\} \cap N(v)| = 4$.  Clearly, $k < t$. Let  $\{x_1, \ldots, x_k\} \cap N(v) = \{x_{i_1}, x_{i_2}, x_{i_3}, x_{i_4} \} $. Observe that $x_k \in \{x_{i_1}, x_{i_2}, x_{i_3}, x_{i_4}\}$. Let $\gamma$ be a maximal simplex such that $\gamma \prec \sigma$.
	If $ \{x_1, \ldots, x_{k}\} \cap (\sigma \setminus \gamma) \neq  \emptyset $, then $x_{k+1} \in \{x_1, \ldots, x_k\}$. Hence 	$\{x_1, \ldots, x_{k+1}\} \cap N(v) = \{x_{i_1}, x_{i_2}, x_{i_3}, x_{i_4}\}$. If $ \{x_1, \ldots, x_{k}\} \cap (\sigma \setminus \gamma) = \emptyset $, then $\{x_{i_1}, x_{i_2}, x_{i_3}, x_{i_4}\} \subseteq \gamma$. From  \Cref{lem:4neighbour}, $N(v)\subseteq \gamma$.   Thus  $x_{k+1} \notin N(v)$, thereby implying that 	$\{x_1, \ldots, x_{k+1}\} \cap N(v) = \{x_{i_1}, x_{i_2}, x_{i_3}, x_{i_4}\}$. 
	If $k+1 = t$, then we get a contradiction to the assumption that $|M_{\prec}(\tau) \cap N(v)| \geq 5$. Inductively assume that for  all $k \leq l < t$, $\{x_1, \ldots, x_{l}\} \cap N(v)  = \{x_{i_1}, x_{i_2}, x_{i_3}, x_{i_4}\} $.  If $ \{x_1, \ldots, x_{t-1}\} \cap (\sigma \setminus \gamma) \neq  \emptyset $, then $x_{t} \in \{x_1, \ldots, x_k\}$. Hence 	$\{x_1, \ldots, x_{t}\} \cap N(v) = \{x_{i_1}, x_{i_2}, x_{i_3}, x_{i_4}\}$.    If $ \{x_1, \ldots, x_{t-1}\} \cap (\sigma \setminus \gamma) = \emptyset $, then $\{x_{i_1}, x_{i_2}, x_{i_3}, x_{i_4}\} \subseteq \gamma$. From  \Cref{lem:4neighbour}, $N(v)\subseteq \gamma$.   Thus  $x_{t} \notin N(v)$. Hence we get that $\{x_1, \ldots, x_{t}\} \cap N(v) = \{x_{i_1}, x_{i_2}, x_{i_3}, x_{i_4}\}$, which is a contradiction to the assumption that $|M_{\prec}(\tau) \cap N(v)| \geq 5$. Thus  $|M_{\prec} (\tau) \cap N(v) | \leq 4$. 
	
	By using an argument similar as above,  $|M_{\prec} (\tau) \cap N(w) | \leq 4$.	Since  $\tau \subseteq  N(v) \cup N(w)$, we see that $M_{\prec} (\tau) \leq 8$. From \Cref{thm:minimalexclusion}, $\Delta_n$ is $8$-collapsible.

	Let $X$ be the Veitoris-Rips complex of a $4$-dimensional cube subgraph  of $\I_n$. Then using \Cref{lem:retraction}, there exists a retraction  $r:\Delta_n \to X$. Since $X \cong \Delta_4$ and  $\Delta_4 \cong S^7$, we see that $\tilde{H}_7(X;\Z) \neq 0$.  Further, since $r_{\ast}: \tilde{H}_7(\Delta_n;\Z) \to \tilde{H}_7(X;\Z)$ is surjective, $\tilde{H}_7(\Delta_n;\Z) \neq 0$. Using  \Cref{prop:collapsibilitysubcomplex}, we conclude that the  collapsibility number of $\Delta_n$ is $8$.
\end{proof}
\subsection{Homology}\label{subsec:homology}
The main aim of this section is to prove \Cref{thm:main3}.  We first establish a series of lemmas, which we need to prove \Cref{thm:main3}.
We always consider the reduced homology with integer coefficients.




For $1 \leq i \leq n$ and $ \epsilon  \in  \{0, 1\}$, let $\Delta_n^{i,\epsilon} = \vr{\I_n^{(i, \epsilon)}}{3}$ and $\partial(\Delta_n) = \bigcup\limits_{i \in [n],  \epsilon \in \{0, 1\}} \Delta_n^{i,\epsilon} $.

The following lemma plays a key role in the proof of \Cref{thm:main3}.

\begin{lemma}\label{lem:homology} 
	
	Let $n \geq 5$ and let  $p \leq n-2$. Then   any $p$-cycle $c$ in $\Delta_n$ is homologous to a $p$-cycle $\tilde{c}$ in  $ \partial(\Delta_n)$. 
	
\end{lemma}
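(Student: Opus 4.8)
The plan is to use the fact that a simplex $\sigma$ of $\Delta_n$ lies in $\partial(\Delta_n)$ \emph{exactly when} it fails to cover all places: $\sigma\in\Delta_n^{i,\epsilon}$ means every vertex of $\sigma$ agrees in coordinate $i$, which is precisely the negation of the covering condition. Hence the simplices outside $\partial(\Delta_n)$ are those covering all places, and since covering all places is preserved under enlarging a simplex, \Cref{thm:maximalsimplex} shows every such simplex is a face of a maximal simplex in $\A_n\cup\B_n$ (the members of $\C_n$ have dimension $7$ and, for $n\ge5$, do not cover all places). My first step would be the extremal bound that \emph{every} simplex of $\Delta_n$ covering all places has at least $n-1$ vertices. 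After translating so that the centre $v$ of the ambient maximal simplex becomes $0\cdots0$, such a face consists of subsets of size $\le 3$ whose union is $[n]$; restricting to the two shapes $N(v)\cup K_v^{i,j,k}$ and $N(v)\cup N(w)$ one checks that covering each of the (at least) $n-1$ coordinates with a $1$ forces a distinct vertex, giving $\ge n-1$ vertices.

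With this bound the case $p<n-2$ is immediate: every simplex of a $p$-chain then has at most $n-2$ vertices, hence cannot cover all places, so $c$ already lies in $\partial(\Delta_n)$ and we take $\tilde c=c$. The remaining case is $p=n-2$, where the simplices of $c$ avoiding $\partial(\Delta_n)$ are exactly the minimal (i.e.\ $(n-2)$-dimensional) simplices covering all places. The main device is a cone construction: for such a minimal $\tau$, contained in a maximal simplex with centre $v$, I claim that $v\notin\tau$ (by minimality the all-$0$ vertex is never a forced vertex), that $\tau\cup\{v\}$ is again a simplex (it is a face of the maximal simplex), and that $\tau$ is the \emph{only} facet of $\tau\cup\{v\}$ covering all places. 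Granting this, $\partial(\tau\cup\{v\})=\pm\tau+(\text{facets lying in }\partial(\Delta_n))$, so modulo chains supported in $\partial(\Delta_n)$ the simplex $\tau$ is a boundary. Writing $c=c_\partial+\sum_\tau n_\tau\,\tau$, with $c_\partial$ supported on $\partial(\Delta_n)$ and the sum over the minimal all-places simplices in $\mathrm{supp}(c)$, and setting $d=\sum_\tau(\pm n_\tau)(\tau\cup\{v_\tau\})$ with the signs above, the chain $\tilde c:=c-\partial d$ cancels every $\tau$ and introduces only simplices of $\partial(\Delta_n)$. Thus $\tilde c$ is supported on $\partial(\Delta_n)$, and since $\partial\tilde c=\partial c=0$ it is a $p$-cycle in $\partial(\Delta_n)$ homologous to $c$.

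The main obstacle is the cone claim, namely that adjoining the centre $v$ destroys the covering property of every proper facet except $\tau$. Its content is that in a minimal covering face \emph{each vertex is the unique vertex of $\tau$ carrying a $1$ in some coordinate}: then deleting that vertex leaves the corresponding coordinate uncovered in the $1$-direction, and since $v=0\cdots0$ supplies no $1$'s, the enlarged facet $(\tau\setminus\{b\})\cup\{v\}$ still fails to cover that coordinate. Establishing this ``private $1$-coordinate'' property — together with the extremal lower bound of the first paragraph — is exactly where the explicit description of the families $\A_n$ and $\B_n$ in \Cref{thm:maximalsimplex} is needed, treating the shapes $N(v)\cup K_v^{i,j,k}$ and $N(v)\cup N(w)$ separately (the coordinate $l_0$ distinguishing $v$ from $w$ is the only place where a vertex could instead be a unique $0$-coverer, and there a direct check produces a private $1$-coordinate anyway). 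Once these two structural facts are verified type by type, the degree split above completes the proof.
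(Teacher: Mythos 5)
Your proposal is correct and follows essentially the same route as the paper: both identify the simplices outside $\partial(\Delta_n)$ as exactly those covering all places, show via the classification of maximal simplices ($N(v)\cup K_v^{i,j,k}$ and $N(v)\cup N(w)$) that such a $p$-simplex with $p\le n-2$ must have exactly $n-1$ vertices and omit the centre $v$, and then cone with $v$ to replace each such simplex by faces lying in $\partial(\Delta_n)$. The only cosmetic differences are that you batch all the boundary corrections into one chain $d$ and phrase the key verification as a ``private $1$-coordinate'' property, whereas the paper removes the offending simplices one at a time by an induction on $|\mu(c)|$ and argues directly that any $(n-1)$-subset of the ambient maximal simplex containing $v$ fails to cover all places.
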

\begin{proof}

	For any chain $z = \sum a_i \sigma_i$ in $\Delta_n$, if $a_i \neq 0$, then  we say that $\sigma_i \in z$.  For a cycle $z$ in $ \Delta_n$, let $\mu(z) = \{\sigma \in z: \sigma \notin \partial(\Delta_n)\}$. Let $c$ be a $p$-cycle in $\Delta_n$. If $\mu(c) = \emptyset$, then $c$ is a $p$-cycle in $\partial(\Delta_n)$. Suppose  $\mu(c) \neq \emptyset$.   We show that 
	$c$ is homologous to a  $p$-cycle $c_1$ such that $|\mu(c_1)| < |\mu(c)|$.  Let $\sigma \in c$ such that $\sigma \notin \partial(\Delta_n)$, {\it i.e.}, $\sigma$ covers all places.  Let $\tau$ be a maximal simplex such that $\sigma \subseteq \tau$.  Using Lemmas \ref{lem:neighbour}, \ref{lem:firsttypemaximal} and \ref{lem:2ndtypemaximal}, we see that either $\tau = N(v) \cup K_v^{i_0, j_0, k_0}$ for some $v$ and $i_0, j_0, k_0 \in [n]$ or $\tau = N(v) \cup N(w)$ for some $v \sim w$.

	{\it \large Case 1.} $\tau = N(v) \cup K_v^{i_0, j_0, k_0}$.
	
	Let $i \in [n] \setminus \{i_0,j_0, k_0\}$.  Observe that  for any $ x \in \tau \setminus \{v^i\}$, $x(i) = v(i)$. Since $\sigma$ covers all places and  $x(i) = v(i)$ for all $ x \in \tau \setminus \{v^i\}$,  we see that  $v^i \in \sigma$. Thus,
	$	\{v^i : i \in [n] \setminus \{i_0, j_0, k_0\}\} \subseteq \sigma$.   Clearly, $y(t) = v(t)$ for all  $y \in \{v^i : i \in [n] \setminus \{i_0, j_0, k_0\}\}$ and $t \in \{i_0, j_0, k_0\}$. Since $\sigma$ covers all places and $v^{i_0, j_0, k_0} \notin \tau$, we conclude that $|\sigma| \geq n-1$. Since $p \leq n-2$ and $\sigma$ is $p$-dimensional, we see that $|\sigma| = n-1$.

	Suppose $v \in \sigma$.  Let $\{x_0\} = \sigma \setminus \{\{v\} \cup \{v^i : i \in [n] \setminus \{i_0, j_0, k_0\}\} \}$. For any $t \in \{i_0, j_0, k_0\}$ and $y \in \{v\} \cup \{v^i : i \in [n] \setminus \{i_0, j_0, k_0\}\}$, $y(t) = v(t)$. Hence the fact that   $\sigma$ covers all places implies that $x_0 = v^{i_0, j_0, k_0}$, which is not possible since $v^{i_0, j_0, k_0} \notin \tau$. So, $v \notin \sigma$.  Clearly, $\sigma \cup \{v\} \in \Delta_n$.
	
	
	Recall that for any simplex $\eta$, $Bd(\eta)$ denotes the simplicial boundary of $\eta$.	Let the coefficient of $\sigma$ in $c$ is $(-1)^m a_{\sigma}$ and the coefficient of $\sigma$ in $Bd(\sigma \cup \{v\})$ is $(-1)^s$.  Define a $p$-cycle  $c_1$ as follows:
	
	$$
	c_1  = \begin{cases}
		\ c - a_{\sigma} Bd(\sigma \cup \{v\})& \text{if} \  m \ \text{and} \  s \ \text{are of same parity},\\
		\ c+a_{\sigma}  Bd (\sigma \cup \{v\}) & \text{if} \  m \ \text{and} \ s  \ \text{are of opposite parity}. \\
	\end{cases}
	$$
	
	Clearly, $c$ is homologous to $c_1$. Observe that $\sigma \notin c_1$.  Let $\gamma \in c_1$ such that $\gamma \notin c$. Then observe that $v \in \gamma$ and $\gamma \subseteq \tau$. But we have seen above that  if $v \in \gamma$, then $\gamma \in \partial(\Delta_n)$, {\it i.e.}, $\gamma$ does not covers all places. Thus, we see that  $|\mu(c_1)| < |\mu(c)|$. Since $|\mu(c)|$ is finite, by repeating the above argument  finite number of times, we get a cycle $c_k$ such that $c$ is homologous to $c_k$ and $|\mu(c_k)| = 0$, {\it i.e.,} $c_k$ is a $p$-cycle in  $ \partial(\Delta_n)$. We take $\tilde{c} = c_k$.

	{\it \large Case 2.} $\tau = N(v) \cup N(w)$.
	
	For any $k \in [n]$ and $\gamma \in \Delta_n$, we say that $\gamma$ covers $k$-places, if there exist  $i_1, \ldots, i_k \in [n]$ such for each $1 \leq l \leq k$, we get $x, y \in \gamma $ such that $x(i_l) =0 $ and $y(i_l) = 1$. 
	
	Observe that if $\sigma \subseteq N(v)$ or $\sigma \subseteq N(w)$, then $\sigma$ can covers at most $p+1$-places. Since $n  > p+1$, $\sigma$ can not covers all places, a contradiction to the assumption  that $\sigma$ covers all places. Hence $N(v) \cap \sigma \neq \emptyset$ and $N(w) \cap \sigma \neq \emptyset$.		 
	
	Since $w \sim v$,   $w = v^{q}$ for some $q \in [n]$. 
	Suppose $v, w \in \sigma$. If $N(w) \cap \sigma = \{v\}$, then $\sigma = \{v, w,  v^{i_1}, \ldots, v^{i_{p-1}} \}$ for some $i_1, i_2, \ldots, i_{p-1} \in [n] \setminus \{q\}$. Observe that $\sigma$ covers only $p$-places, namely $i_1, \ldots, i_{p-1}, q$. Hence $|N(w) \cap \sigma | \geq 2$. Then 
	$\sigma = \{v, w, v^{i_1}, \ldots, v^{i_s}, v^{q, j_1}, \ldots, v^{q, j_t}\}$, for some  $i_1, \ldots, i_s, j_1 \ldots, j_t \in [n]$, where $s+t = p-1$. Here $\sigma$ can covers at most $p$ places, namely  $i_1, \ldots, i_s, j_1, \ldots, j_t, q$ and  $\sigma$ covers $p$ places only if  
	$\{i_1, \ldots, i_s\} \cap \{j_1, \ldots, j_t\} = \emptyset$. Since $p < n$, $\sigma$ does not covers all places. Hence $\{v, w\} \not\subseteq \sigma$.

	Suppose $v\in \sigma$. Then $w \notin \sigma$. If $N(w) \cap \sigma = \{v\}$, then $\sigma = \{v, v^{i_1}, \ldots, v^{i_p} \}$ for some $i_1, i_2, \ldots, i_p \in [n]$. Observe that $\sigma$ cover only $p$-places, namely $i_1, \ldots, i_p$. Hence $|N(w) \cap \sigma | \geq 2$.
	Let $\sigma = \{v, v^{i_1}, \ldots, v^{i_s}, v^{q, j_1}, \ldots, v^{q, j_t}\}$, where $i_1, \ldots, i_s, j_1 \ldots, j_t \in [n]$ and $s+t = p$. Here  $\sigma$ can covers at most $p+1$ places, namely  $i_1, \ldots, i_s, j_1, \ldots, j_t, q$ and $\sigma$ covers $p+1$ places only if  
	$\{i_1, \ldots, i_s\} \cap \{j_1, \ldots, j_t\} = \emptyset, q \notin \{i_1, \ldots, i_s\}$. Thus, we conclude that  $v \notin \sigma$. By an argument  similar as above, $w \notin \sigma$.
	
	Let the coefficient of $\sigma$ in $c$ is $(-1)^m a_{\sigma}$ and let the coefficient of $\sigma$ in $Bd(\sigma \cup \{v\})$ is $(-1)^r$.  Define a $p$-cycle  $d_1$ as follows:
	
	$$
	d_1  = \begin{cases}
		\ c - a_{\sigma} Bd(\sigma \cup \{v\})& \text{if} \  m \ \text{and} \  r \ \text{are of same parity},\\
		\ c+ a_{\sigma}  Bd(\sigma \cup \{v\}) & \text{if} \  m \ \text{and} \ r \ \text{are of opposite parity}. 
	\end{cases}
	$$
	
	Clearly, $c$ is homologous to $d_1$ and  $|\mu(d_1)| < |\mu(c)|$. Since $|\mu(c)|$ is finite, by repeating the above argument  finite number of times, we get a cycle $d_k$ such that $c$ is homologous to $d_k$ and $|\mu(d_k)| = 0$. We take $\tilde{c} = d_k$. 
	
	This completes the proof.
\end{proof}


For any $1 \leq t < n$ and two sequences  $(\epsilon_1, \ldots, \epsilon_{t})$ and $(j_1, \ldots, j_{t})$, where for each $1 \leq l \leq t$, $\epsilon_l \in \{0, 1\}$ and $j_l \in [n]$, let $\I_n^{(j_1, \epsilon_1), \ldots, (j_t, \epsilon_t)}$  denote  the induced subgraph of $\I_n$  on the vertex set $\{v_1 \ldots v_n : v_{j_l} = \epsilon_l, 1 \leq l \leq t\}$.  Observe that for any $i < n$ and an $i$-dimensional cube subgraph $H$ of $\I_n$, there exist two sequences $(\epsilon_1, \ldots, \epsilon_{n-i})$ and $(j_1, \ldots, j_{n-i})$  such that  $H = \I_n^{(j_1, \epsilon_1), \ldots, (j_{n-i}, \epsilon_{n-i})}$.

We now define a class, whose elements are the finite unions of the Vietoris-Rips complexes of cube graphs. For $n \geq 4$ and $3 \leq m \leq n$, let  $\W_n^m$ denote the collection of all  finite union $X= X_1 \cup  \ldots  \cup X_k$ ($k$ ranges over positive integers) such that $X$ satisfies the following conditions:
\begin{itemize}
	\item for each $1 \leq j \leq k$, $X_j = \vr{H_j}{3}$ for some $m$-dimensional cube subgraph $H_j$ of $\I_n$.
	
	\item if $m \neq n$, then $X \subseteq \vr{H}{3} $ for some $(m+1)$-dimensional cube subgraph $H = \I_n^{(j_1, \epsilon_1), \ldots, (j_{n-m-1}, \epsilon_{n-m-1})}$  of $\I_n$. Further, if $X \neq \partial(\vr{H}{3})$, then there exists $\lambda \in [n] \setminus \{j_1, \ldots, j_{n-m-1}\}, \epsilon \in \{0, 1\}$ such that 
	$\vr{H^{(\lambda, \epsilon)}}{3} \subseteq X$ and $\vr{H^{(\lambda, \epsilon')}}{3}\not\subseteq X$, where $\epsilon' = \{0, 1\} \setminus \{\epsilon\}$.

\end{itemize}
\begin{rmk}
	Note that $ \W_n^n = \{\Delta_n\}$ and   $\partial(\Delta_n) \in \W_n^{n-1}$. Let $X = X_1 \cup  \ldots \cup X_k \in \W_n^m$ and suppose $X \subseteq  \vr{H}{3}$, where   $H = \I_n^{(j_1, \epsilon_1), \ldots, (j_{n-m-1}, \epsilon_{n-m-1})}$.   If $k = 1$ or $X = \partial(\vr{H}{3})$, then clearly, $X$ is connected.  If $X \neq \partial(\vr{H}{3})$, then  there exists $\lambda \in [n] \setminus \{j_1,  \ldots, j_{n-m-1}\}$ and $\epsilon \in \{0, 1\}$ such that $\vr{H^{(\lambda, \epsilon)}}{3} \subseteq X$ and $\vr{H^{(\lambda, \epsilon')}}{3}\not\subseteq X$, where $\epsilon' = \{0, 1\} \setminus \{\epsilon\}$. Let $X_p = \vr{H^{(\lambda, \epsilon)}}{3}$. For each $p \neq j \in [k]$,  since $X_j \cap X_p \cong \Delta_{m-1}$ is non empty and connected, we conclude that $X$ is connected. 
\end{rmk}

For a $X =  X_1 \cup \ldots \cup X_k \in \W_n^m$, the following claim gives us a condition, when the intersection of $ \bigcup\limits_{l \neq i } X_l$ with $X_i$ ($i \in [k]$) belongs to $\W_n^{m-1}$ or the intersection of $ \bigcup\limits_{l \neq i, j } X_l$ with $X_i \cap X_j$ ($i, j \in [k]$) belongs to $\W_n^{m-2}$, which  plays a key role in the proofs of Lemmas \ref{lem:contractible}, \ref{lem:5homology} and \ref{thm:main}, while we use induction on $k$ and $m$.

\begin{claim} \label{claim:base}
	Let  $n \geq 4$ and $3 \leq m \leq n$ and let $X = X_1 \cup  \ldots  \cup X_k \in \W_n^m$.
	\begin{enumerate}
		\item[(i)] If  $k > 1$, then there  exists $q \in [k]$ such that  $ \bigcup\limits_{j \neq q} X_j \in \W_n^m$ and $X_q \cap \bigcup\limits_{j \neq q} X_j \in \W_n^{m-1}$.
			%
		\item[(ii)] If $k \geq 3$ and $m \geq 5$, then there exist $\lambda, q \in [k]$ such that for any subset $A \subseteq [k] \setminus \{q\}$  such that $\lambda \in A$ and $|A| \geq 2$,  the following are true:
		\begin{itemize}
			\item there exists $p \in A\setminus \{\lambda\}$ such that if
			$  \bigcup\limits_{i \in A \setminus \{p\}} (X_i  \cap X_p \cap X_q) \neq \emptyset$, then $\bigcup\limits_{i \in A \setminus \{p\}} (X_i  \cap X_p \cap X_q)  \in \W_n^{m-2}$.
			
			\item  $ \bigcup\limits_{j \neq q} X_j \in \W_n^m$ and $X_q \cap \bigcup\limits_{j \neq q} X_j \in \W_n^{m-1}$.
			
		\end{itemize}
		
	\end{enumerate}
\end{claim}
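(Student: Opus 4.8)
The plan is to translate everything into combinatorics of the set of facets. Since $k>1$ (and a fortiori $k\ge 3$) forces $m<n$, there is an ambient $(m+1)$-cube $H=\I_n^{(j_1,\epsilon_1),\dots,(j_{n-m-1},\epsilon_{n-m-1})}$ with $X\subseteq\vr H3$, and each $X_j=\vr{H^{(c_j,\delta_j)}}3$ for a free coordinate $c_j\in F:=[n]\setminus\{j_1,\dots,j_{n-m-1}\}$ and a value $\delta_j\in\{0,1\}$, the $k$ pairs $(c_j,\delta_j)$ being distinct. Thus $X$ is recorded by the facet set $S=\{(c_j,\delta_j)\}\subseteq F\times\{0,1\}$ of the cube $H$. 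First I would record two routine facts. Since Hamming subcubes are isometrically embedded, for distinct $i,j$ one has $X_i\cap X_j=\vr{H^{(c_i,\delta_i),(c_j,\delta_j)}}3$ when $c_i\neq c_j$ and $X_i\cap X_j=\emptyset$ when $c_i=c_j$, and likewise a triple intersection is nonempty iff the three coordinates are distinct, in which case it is an $(m-2)$-subcube complex. Second, a nonempty union of facet-complexes of an $r$-cube $C$ lies in $\W_n^{r-1}$ iff it equals $\partial(\vr C3)$ or it has a \emph{split coordinate}, i.e.\ a free coordinate of $C$ carrying exactly one of its two facets. Calling $c\in F$ \emph{full}, \emph{split}, or \emph{empty} according to whether $S$ contains both, one, or none of $(c,0),(c,1)$, the defining property of $\W_n^m$ says that either $S=F\times\{0,1\}$ (full boundary) or $S$ has a split coordinate.

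For (i) I would split on $S$. If $S=F\times\{0,1\}$, pick any $q$; deleting it makes $c_q$ split, so $\bigcup_{j\ne q}X_j\in\W_n^m$, while $X_q\cap\bigcup_{j\ne q}X_j=\partial(\vr{H_q}3)$ since every coordinate $\ne c_q$ stays full, hence lies in $\W_n^{m-1}$. If instead $S$ has a split coordinate $c^\ast$ with $(c^\ast,\epsilon)\in S$, $(c^\ast,\epsilon')\notin S$, then $(c^\ast,\epsilon)$ is the only facet of $S$ on $c^\ast$, so (as $k\ge 2$) some facet lies on a coordinate $\ne c^\ast$; take it to be $q$. Deleting $q$ leaves $c^\ast$ split, so $\bigcup_{j\ne q}X_j\in\W_n^m$; and inside $X_q\cap\bigcup_{j\ne q}X_j$, viewed as facets of the $m$-cube $H_q$, the coordinate $c^\ast\ (\ne c_q)$ still carries $(c^\ast,\epsilon)$ but not its antipode, so it is split and the intersection lies in $\W_n^{m-1}$.

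For (ii) the second bullet is exactly the conclusion of (i) for that $q$, so only the triple-intersection bullet is new, and here I would exploit the robustness of a split coordinate under intersection. In the split case set $\lambda$ to be the unique facet of $S$ on $c^\ast$, i.e.\ $X_\lambda=\vr{H^{(c^\ast,\epsilon)}}3$, keeping the $q$ from (i) on a coordinate $\ne c^\ast$. Given an admissible $A\ni\lambda$, choose any $p\in A\setminus\{\lambda\}$: if $c_p=c_q$ then all triple intersections vanish and the clause is vacuous; if $c_p\ne c_q$ then $c^\ast\notin\{c_p,c_q\}$, and $\bigcup_{i\in A\setminus\{p\}}(X_i\cap X_p\cap X_q)$ contains the facet $\vr{(H_p\cap H_q)^{(c^\ast,\epsilon)}}3$ (from $i=\lambda$) but never its antipode (as $(c^\ast,\epsilon')\notin S$), so $c^\ast$ is a split coordinate of this union of facets of the $(m-1)$-cube $H_p\cap H_q$, placing it in $\W_n^{m-2}$. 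Note that $m\ge 5$ is precisely what makes $m-2\ge 3$, so that the target class $\W_n^{m-2}$ is defined.

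The step I expect to be the genuine obstacle is the full-boundary configuration $S=F\times\{0,1\}$ in part (ii): there is no global split coordinate to propagate, and if $A$ consists of several complete antipodal pairs on coordinates avoiding $c_q$, then for every choice of $p$ the triple-intersection union collapses to a union of complete pairs on a proper set of coordinates (e.g.\ two opposite facets), which is neither a full boundary nor equipped with a split coordinate. Resolving this will require either choosing $\lambda$ on the coordinate $c_q$, so that $c_q$ is forced split within every $A$, and then showing the remaining obstructions are vacuous, or observing that in the inductive applications (Lemmas \ref{lem:contractible}, \ref{lem:5homology}, \ref{thm:main}) the relevant complex is never the full boundary because it arises from deleting facets; I would first try to push the split-coordinate bookkeeping through and, failing that, isolate full boundary as a separate easy computation. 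A minor separate point is that for $m=3$ the target $\W_n^{2}$ falls outside the declared range, so the $m=3$ instance of (i) needs either an extended definition or treatment as a base case.
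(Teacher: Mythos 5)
Your treatment of part (i), and of the split-coordinate case of part (ii), is essentially the paper's own argument rewritten in your full/split/empty bookkeeping: in the non-boundary case the paper likewise takes $X_\lambda=\vr{H^{(\mu,\epsilon)}}{3}$ to be the unique facet on the split coordinate $\mu$, takes $X_q$ on a different coordinate, and for any $p$ observes that the triple union contains $\vr{H^{(\mu,\epsilon),(t_1,s_1),(t_2,s_2)}}{3}$ but not its antipode, so $\mu$ stays split after intersecting. (Your side remark that the $m=3$ instance of (i) would land in the undefined class $\W_n^{2}$ is accurate but harmless, since the claim is only ever invoked with $m\ge 4$.) The one place you stop short --- the full-boundary configuration in part (ii) --- is therefore a genuine gap in your proposal; but you should know that the paper does not close it either.

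The paper's proof of that case takes $X_q=\vr{H^{(t,0)}}{3}$, $X_\lambda=\vr{H^{(t,1)}}{3}$ and asserts that $X_\lambda\cap X_q=\emptyset$ forces $\bigcup_{i\in A\setminus\{p\}}(X_i\cap X_p\cap X_q)=\emptyset$; this annihilates only the $i=\lambda$ summand, not the union. The antipodal-pair scenario you describe really does defeat the statement: for $X=\partial(\vr{H}{3})$ with $H$ a $6$-cube, $q$ the facet $H^{(1,0)}$, $\lambda$ the facet $H^{(1,1)}$, and $A$ indexing $\{H^{(1,1)},H^{(2,0)},H^{(2,1)},H^{(3,0)},H^{(3,1)}\}$, every admissible $p$ yields the union of two opposite facet complexes of a $4$-cube, which is nonempty, disconnected (hence not in $\W_n^{m-2}$, as the Remark after the definition shows every member of $\W_n^{m-2}$ is connected), is not a full boundary, and has no split coordinate; choosing $\lambda$ on a coordinate different from $c_q$ fails the same way once $A$ contains two complete antipodal pairs. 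So your first proposed repair (which is the paper's choice) does not work as stated, and your second one --- checking that the inductive applications in Lemmas \ref{lem:representation}, \ref{lem:injective}, \ref{lem:5homology} and \ref{thm:main} never feed this bad configuration into the claim, and reformulating the claim accordingly --- is the direction a correct write-up would have to take. In short: you have reproduced the paper's proof where it is sound, and the step you could not push through is precisely the step at which the paper's own argument is erroneous.
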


\begin{proof}

	Let $ X \subseteq \vr{H}{3}$, where $H = \I_n^{(j_1, \epsilon_1), \ldots, (j_{n-m-1}, \epsilon_{n-m-1})}$. 
	\begin{itemize}
		
		\item[(i)]If $X = \partial(\vr{H}{3})$, then choose $t \in [n] \setminus \{j_1, \ldots, j_{n-m-1}\}$ and $s \in \{0, 1\}$. Clearly $\vr{H^{t, s}}{3} \subseteq X$.  Without loss of generality we assume that 
		$X_1 = \vr{H^{t, s}}{3} \subseteq X$. Then $\bigcup\limits_{j\neq 1} X_j \in \W_n^m$ and 
		$X_1 \cap \bigcup\limits_{j\neq 1} X_j = \partial(X_1) \in \W_n^{m-1}$.

		Let  $X \neq \partial(\vr{H}{3})$. There exists $\mu \in [n] \setminus \{j_1,  \ldots, j_{n-m-1}\}$ and $\epsilon \in \{0, 1\}$ such that $\vr{H^{(\mu, \epsilon)}}{3} \subseteq X$ and $\vr{H^{(\mu, \epsilon')}}{3}\not\subseteq X$, where $\epsilon' = \{0, 1\} \setminus \{\epsilon\}$.
		Then $ \vr{H^{(\mu, \epsilon)}}{3} = X_p$ for some  $1 \leq p \leq k$. 
		Choose $l \in [n]\setminus \{j_1, \ldots j_{n-m-1}, \mu\} $ and $s \in \{0, 1\}$ such that  $\vr{H^{(l, s)}}{3} \subseteq X$ (such $l $ exists because $m \geq 3$). There exists $q \in [k] \setminus \{p\}$ such that  $X_q = \vr{H^{(l, s)}}{3}$. Let $Y = \bigcup\limits_{j \neq q} X_j$.  Since $X_q \cap X_p \neq \emptyset$, $X_q \cap Y \neq \emptyset$.  Clearly,   $Y \in \W_n^m$. Further, 
		$X_q \cap Y \subseteq \vr{T}{3},$ where $T = \I_n^{(j_1, \epsilon_1), \ldots, (j_{n-m-1}, \epsilon_{n-m-1}),(l, s)}$. Observe that $\vr{T^{(\mu, \epsilon)}}{3} \subseteq X_q \cap Y$ and $\vr{T^{(\mu, \epsilon')}}{3} \not \subseteq X_q \cap Y$. Hence  $X_q \cap Y \in \W_n^{m-1}$.
		
		\item[(ii)] If $X = \partial(\vr{H}{3})$. Then choose $t \in [n] \setminus \{j_1, \ldots, j_{n-m-1}\}$. Take $X_q = \vr{H^{(t, 0)}}{3}$ and $X_{\lambda} = \vr{H^{(t, 1)}}{3}$. Then clearly,  $ \bigcup\limits_{j \neq q} X_j \in \W_n^m$,  $X_q \cap \bigcup\limits_{j \neq q} X_j \in \W_n^{m-1}$. Further, since  $X_{\lambda} \cap X_q = \emptyset$, for any choice of $p$ and $A$ containing $\lambda$, we get $ \bigcup\limits_{i \in A \setminus \{p\}} (X_i  \cap X_p \cap X_q) = \emptyset$, and therefore result is true. 
		
		Assume  $X \neq \partial(\vr{H}{3})$. There exists $\mu \in [n] \setminus \{j_1,  \ldots, j_{n-m-1}\}$ and $\epsilon \in \{0, 1\}$ such that $\vr{H^{(\mu, \epsilon)}}{3} \subseteq X$ and $\vr{H^{(\mu, \epsilon')}}{3}\not\subseteq X$, where $\epsilon' = \{0, 1\} \setminus \{\epsilon\}$. Hence $X_{\lambda} = \vr{H^{(\mu, \epsilon)}}{3}$ for some $\lambda \in [k]$. Choose $t_1 \in [n] \setminus \{j_1, \ldots, j_{n-m-1}, \mu\}$ and $s_1 \in \{0, 1\}$ such that 
		$ \vr{H^{(t_1, s_1)}}{3} \subseteq X$.  There exists $q \in [k] \setminus \{\lambda\}$  such that  $X_q = \vr{H^{(t_1, s_1)}}{3}$.  Let $A \subset [k] \setminus \{q\}$ such that $\lambda \in A$ and $|A| \geq 2$. Choose $p \in A \setminus \{\lambda\}$.  There exists $t_2 \in [n] \setminus \{j_1, \ldots, j_{n-m-1}, \mu\}$ and $s_2 \in \{0, 1\}$ such that 
		$X_p = \vr{H^{(t_2, s_2)}}{3}$. Since $p \neq q, (t_1, s_1) \neq (t_2, s_2)$.  Let $Z_A = \bigcup\limits_{i \in A \setminus \{p\}} (X_i  \cap X_q)$.  If $Z_A \cap X_p \cap X_q \neq \emptyset$, then  it is easy to check that  $Z_A \cap X_p \cap X_q \subseteq \partial(\vr{H^{(t_1, s_1), (t_2, s_2)}}{3})$, $\vr{H^{(\mu, \epsilon), (t_1, s_1), (t_2, s_2)}}{3} \subseteq Z_A \cap X_p \cap X_q$ and $\vr{H^{(\mu, \epsilon'), (t_1, s_1), (t_2, s_2)}}{3} \not\subseteq Z_A \cap X_p \cap X_q$.   Hence $Z_A \cap X_p \cap X_q \in \W_n^{m-2}$. Clearly, $ \bigcup\limits_{j \neq q} X_j \in \W_n^m$ and $X_q \cap \bigcup\limits_{j \neq q} X_j \in \W_n^{m-1}$.
		
	\end{itemize}
\end{proof}

The nerve of a family of sets $(A_i)_{i \in I}$  is the simplicial complex $\mathbf{N} = \mathbf{N}(\{A_i\})$ defined on the vertex set $I$ so that a finite subset $\sigma \subseteq I$ is in $\mathbf{N}$ precisely when $\bigcap\limits_{i \in \sigma} A_i \neq \emptyset$.

\begin{proposition}\cite[Theorem 10.6]{bjorner}\label{thm:nerve}
	Let $\Delta$ be a simplicial complex and $(\Delta_i)_{i \in I}$ be a family of subcomplexes such that $\Delta = \bigcup\limits_{i \in I} \Delta_i$. Suppose every nonempty finite intersection $\Delta_{i_1} \cap \ldots \cap \Delta_{i_t}$ for $i_j \in I, t \in \mathbb{N}$ is contractible, then $\Delta$ and $\mathbf{N}(\{\Delta_i\})$ are homotopy equivalent.
	
\end{proposition}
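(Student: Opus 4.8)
The plan is to realize the homotopy equivalence through an intermediate space---the \emph{nerve blow-up}, i.e.\ the homotopy colimit of the diagram of intersections---that projects onto both $\Delta$ and $\mathbf{N} = \mathbf{N}(\{\Delta_i\})$, and then to show that each projection is a homotopy equivalence. For a nonempty finite $\sigma \subseteq I$ write $\Delta_\sigma = \bigcap_{i \in \sigma} \Delta_i$, so that $\sigma$ is a simplex of $\mathbf{N}$ exactly when $\Delta_\sigma \neq \emptyset$, and in that case $\Delta_\sigma$ is contractible by hypothesis. Writing $\Delta^\sigma$ for the geometric simplex on vertex set $\sigma$, set
\[
M = \Big( \coprod_{\sigma \in \mathbf{N}} \Delta_\sigma \times \Delta^\sigma \Big) \big/ \sim,
\]
where the identifications glue $\Delta_\tau \times \Delta^\sigma$ into $\Delta_\sigma \times \Delta^\sigma$ along the inclusions $\Delta_\sigma \hookrightarrow \Delta_\tau$ and the face maps $\Delta^\sigma \hookrightarrow \Delta^\tau$ for $\sigma \subseteq \tau$. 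There are two canonical projections $q \colon M \to \Delta$ and $p \colon M \to \mathbf{N}$, the first forgetting the simplex coordinate and the second forgetting the intersection coordinate.

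First I would show that $p$ is a homotopy equivalence. The space $M$ is the homotopy colimit of the diagram $\sigma \mapsto \Delta_\sigma$ indexed by the face poset of $\mathbf{N}$, and since every $\Delta_\sigma$ is contractible this diagram is objectwise equivalent to the constant diagram with value a point. The Projection Lemma for homotopy colimits (an objectwise homotopy equivalence of diagrams induces a homotopy equivalence of homotopy colimits) then yields $M \simeq \mathrm{hocolim}\,(\ast) \simeq |\mathbf{N}|$, which is exactly the statement that $p$ is an equivalence, since $\mathrm{hocolim}$ of the one-point diagram is the order complex of the face poset, namely the barycentric subdivision of $\mathbf{N}$.

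Next I would show that $q$ is a homotopy equivalence. Here the point is that the $\Delta_i$ are \emph{subcomplexes}, hence form a closed cover of $\Delta$ by cofibrant pieces whose intersections are again subcomplexes; for such a cover the canonical map $\mathrm{hocolim}\,(\sigma \mapsto \Delta_\sigma) \to \mathrm{colim}\,(\sigma \mapsto \Delta_\sigma) = \bigcup_i \Delta_i = \Delta$, which collapses each factor $\Delta^\sigma$ to a point, is a homotopy equivalence because all the gluing inclusions are cofibrations. This map is precisely $q$. Composing the two, $\Delta \simeq M \simeq \mathbf{N}$.

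The main obstacle is making the two Projection-Lemma steps rigorous: one must verify that the cover behaves like a cofibrant (``good'') cover so that the comparison between homotopy colimit and colimit is an equivalence, and that objectwise contractibility genuinely transfers to the homotopy colimit. For the \emph{finite} families actually used in this paper one can sidestep the general homotopy-colimit machinery by inducting on $k = |I|$ via Mayer--Vietoris gluing: writing $\Delta = A \cup \Delta_k$ with $A = \Delta_1 \cup \dots \cup \Delta_{k-1}$, the intersection $A \cap \Delta_k$ is covered by $(\Delta_i \cap \Delta_k)_{i<k}$ whose finite intersections $\Delta_{i_1}\cap\dots\cap\Delta_{i_t}\cap\Delta_k$ are again contractible, so the inductive hypothesis identifies $A \simeq \mathbf{N}|_{[k-1]}$, $A \cap \Delta_k \simeq \mathrm{lk}_{\mathbf{N}}(k)$, and $\Delta_k \simeq \mathrm{star}_{\mathbf{N}}(k)$ (a contractible cone); since subcomplex inclusions are cofibrations, the gluing lemma for homotopy pushouts then gives $\Delta \simeq \mathbf{N}$. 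The delicate point in this inductive route is ensuring that the three equivalences are arranged to commute with the inclusions, so that the ladder of homotopy equivalences actually induces one on the pushouts.
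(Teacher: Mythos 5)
This proposition is the classical Nerve Lemma, and the paper does not prove it at all: it is imported verbatim as \cite[Theorem 10.6]{bjorner}, so there is no in-paper proof to compare against. Your argument is therefore necessarily a "different route," but it is the standard one from the diagrams-of-spaces literature and is correct in outline: the blow-up $M=\mathrm{hocolim}\,(\sigma\mapsto\Delta_\sigma)$ over the face poset of $\mathbf{N}$, the Homotopy Lemma (objectwise contractibility gives $M\simeq\mathrm{hocolim}(\ast)\simeq\mathrm{sd}(\mathbf{N})\cong\mathbf{N}$), and the Projection Lemma (for a cover by subcomplexes of a simplicial complex the canonical map $M\to\mathrm{colim}=\Delta$ is an equivalence) together give exactly the statement, including the case of infinite index sets $I$ permitted by the hypothesis. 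The two points you flag as delicate are the right ones: the Projection Lemma needs slightly more than ``all gluing maps are cofibrations'' (one needs the arrangement of subcomplexes to be closed and the diagram cofibrant, which holds here because all $\Delta_\sigma$ are subcomplexes of a fixed simplicial complex), and your alternative Mayer--Vietoris induction on $|I|$ requires the three equivalences $A\simeq\mathbf{N}|_{[k-1]}$, $A\cap\Delta_k\simeq\mathrm{lk}_{\mathbf{N}}(k)$, $\Delta_k\simeq\overline{\mathrm{star}}_{\mathbf{N}}(k)$ to be realized by maps strictly compatible with the inclusions before the gluing lemma applies --- which in practice is arranged precisely by using the blow-up projections, so the inductive route does not really avoid the first one. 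Since only finite covers of finite complexes are used in this paper, either route suffices for the application, and I see no genuine gap beyond the standard technical verifications you have already identified.
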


\begin{lemma}\label{prop:upto3}
	For any $X \in \W_4^3$, 
	$\tilde{H}_j(X) = 0$ for $0 \leq j \leq 2$.
\end{lemma}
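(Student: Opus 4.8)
The plan is to apply the Nerve Lemma (\Cref{thm:nerve}) to the given covering $X = X_1 \cup \cdots \cup X_k$ and then compute the homology of the resulting nerve directly. First I would record the shape of the pieces and their intersections. Each $X_j = \vr{H_j}{3}$ for a $3$-dimensional cube subgraph $H_j$ of $\I_4$, and every such $H_j$ has the form $\I_4^{(i,\epsilon)} \cong \I_3$; since $\mathrm{diam}(\I_3) = 3$, the complex $\vr{\I_3}{3}$ is the full simplex on its $8$ vertices, so each $X_j$ is contractible. A nonempty finite intersection $X_{j_1} \cap \cdots \cap X_{j_t}$ has vertex set $\{v : v(i_l) = \epsilon_l,\ 1 \le l \le t\}$ and equals $\vr{H'}{3}$, where $H' \cong \I_{4-r}$ is the sub-cube cut out by the $r \ge 1$ distinct coordinate constraints involved (the intersection is empty precisely when two constraints conflict). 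As $\mathrm{diam}(\I_{4-r}) \le 3$ for $r \ge 1$, each such $H'$ again yields a full simplex, hence is contractible. With every nonempty finite intersection contractible, \Cref{thm:nerve} gives $X \simeq \mathbf{N}(\{X_j\})$.

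The core of the argument is to identify this nerve. Index the (at most $8$) subgraphs appearing among the $X_j$ by pairs $(i,\epsilon)$ with $i \in [4]$ and $\epsilon \in \{0,1\}$. Two pieces $\vr{\I_4^{(i,\epsilon)}}{3}$ and $\vr{\I_4^{(i',\epsilon')}}{3}$ meet unless $i = i'$ and $\epsilon \ne \epsilon'$, and a whole subfamily has nonempty intersection exactly when it is pairwise nonconflicting; hence $\mathbf{N}(\{X_j\})$ is a flag complex. In fact it is the induced subcomplex, on the chosen vertex set $S \subseteq \{(i,\epsilon)\}$, of the full nerve $J := S^0 \ast S^0 \ast S^0 \ast S^0 \cong S^3$, where the $i$-th copy of $S^0$ is the pair $\{(i,0),(i,1)\}$.

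It then remains to split into two cases according to the defining conditions of $\W_4^3$. If $S$ consists of all $8$ vertices, then $X = \partial(\Delta_4)$ and $\mathbf{N}(\{X_j\}) = J \cong S^3$, whose reduced homology vanishes in degrees $0,1,2$. Otherwise $X \ne \partial(\Delta_4)$, and the second condition in the definition of $\W_4^3$ provides a coordinate $\lambda$ and a sign $\epsilon$ with $(\lambda,\epsilon) \in S$ but $(\lambda,\epsilon') \notin S$; the vertex $(\lambda,\epsilon)$ then has its only potential non-neighbour absent, so it is adjacent to every other vertex of $S$ and, the nerve being a flag complex, is a cone apex. Hence $\mathbf{N}(\{X_j\})$ is contractible. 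In either case $\tilde{H}_j(X) = 0$ for $0 \le j \le 2$, as claimed.

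I expect the only real subtlety to lie in this last step: one must invoke the $\W_4^3$ hypothesis, not merely $S \subsetneq \{(i,\epsilon)\}$, to force the cone point. Dropping it would allow configurations such as $S = \{(i,\epsilon) : i \in \{2,3,4\}\}$, whose nerve is $S^0 \ast S^0 \ast S^0 \cong S^2$ with nonzero $\tilde{H}_2$; the defining condition is exactly what rules this out. By contrast, verifying that each piece and each nonempty intersection is a full simplex, and that compatibility is a pairwise (hence flag) condition, is routine.
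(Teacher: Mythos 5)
Your proof is correct and follows essentially the same route as the paper: apply the Nerve Lemma after checking that every nonempty finite intersection is contractible (being the Vietoris--Rips complex at scale $3$ of a cube of dimension at most $3$, hence a full simplex), then identify the nerve as $S^0 \ast S^0 \ast S^0 \ast S^0 \cong S^3$ when $X = \partial(\Delta_4)$ and as a cone with apex $(\lambda,\epsilon)$ otherwise. Your closing remark that the coning step genuinely requires the defining condition of $\W_4^3$ rather than mere properness of $S$ is a point the paper passes over silently, and it is worth having made explicit.
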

\begin{proof}
	Let $X = X_1 \cup \ldots \cup X_k$.  Observe that  each non empty intersection $X_{i_1} \cap \ldots \cap X_{i_t} $ is homeomorphic to Vietoris-Rips complex of some cube subgraph of dimension less than $4$ and therefore contractible.  From \Cref{thm:nerve}, $ X \simeq \mathbf{N}(\{X_{i}\})$. For any $i, j\in [4]$ and $\epsilon, \delta \in \{0, 1\}$, let  $\overline{\{(i, \epsilon), (j, \delta)\}}$ be a simplicial complex on vertex set $\{(i, \epsilon), (j, \delta)\}$, which is isomorphic to $S^0$. If $X = \partial(\Delta_4)$, {\it i.e.,} $X = \bigcup\limits_{i \in [4], \epsilon \in \{0, 1\}} \Delta_4^{i, \epsilon}$, then it is easy to check that 
	$$\mathbf{N}(\{X_{i}\}) \cong \overline{\{(1, 0), (1,1)\} } \ast \overline{\{(2, 0), (2,1)\} }  \ast  \overline{ \{ (3, 0), (3,1)\}} \ast\overline{ \{(4, 0), (4,1)\}}, $$
	
	the join of $4$-copies of $S^{0}$. Hence $X\simeq  \mathbf{N}(\{X_{i}\}) \simeq S^3$ and therefore $\tilde{H}_j(X) = 0$ for $0 \leq j \leq 2$.
	
	If $X \neq \bigcup\limits_{i \in [4], \epsilon \in \{0, 1\}} \Delta_4^{i, \epsilon}$, then there exists $p \in [4], \epsilon \in \{0, 1\}$ such that $\Delta_4^{p, \epsilon} \subseteq X$ but $\Delta_4^{p, \epsilon'} \not\subseteq X$, where $\{\epsilon'\}= \{0,1\} \setminus \{\epsilon\}$. It is easy to check that 
	$\mathbf{N}(\{X_{i}\})$ is a cone over the vertex $(p, \epsilon)$ and therefore it is  contractible. 
	Thus, we conclude that $\tilde{H}_j(X) = 0$ for $0 \leq j \leq 2$.
\end{proof}

\begin{lemma} \label{lem:contractible}
	Let $n \geq 5$  and $4 \leq m \leq n$. For any  $X \in \W_n^m$ and $0 \leq j \leq 3$, $\tilde{H}_j(X) = 0$.
	
\end{lemma}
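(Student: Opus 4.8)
The plan is to prove the statement by induction on $m$, and for each fixed $m$ by a secondary induction on the number $k$ of pieces in a presentation $X = X_1 \cup \cdots \cup X_k \in \W_n^m$. Before starting, I record a base fact in dimension three: if $X \in \W_n^3$ then $X \subseteq \vr{H}{3}$ for some $4$-dimensional cube subgraph $H$ of $\I_n$, and the cube isomorphism $H \cong \I_4$ carries $X$ to an element of $\W_4^3$; since homology is invariant under this isomorphism, \Cref{prop:upto3} gives $\tilde{H}_j(X) = 0$ for $0 \le j \le 2$. The main engine is the reduced Mayer--Vietoris sequence, fed by a decomposition produced by \Cref{claim:base}(i).

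First I would dispose of the single-piece case $k = 1$, i.e.\ $X = \vr{H}{3}$ for an $m$-cube $H \cong \I_m$. When $m = 4$ we have $X \cong \Delta_4 \cong S^7$, so $\tilde{H}_j(X) = 0$ for all $j \le 6$, in particular for $0 \le j \le 3$. When $m \ge 5$, I transport \Cref{lem:homology} through the isomorphism $H \cong \I_m$: since $m - 2 \ge 3$, every $p$-cycle of $\vr{H}{3}$ with $p \le 3$ is homologous to a cycle in $\partial(\vr{H}{3})$, so the inclusion induces a surjection $\tilde{H}_p(\partial(\vr{H}{3})) \twoheadrightarrow \tilde{H}_p(\vr{H}{3})$ for $p \le 3$. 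But $\partial(\vr{H}{3})$ is the union of the VR-complexes of the $(m-1)$-faces of $H$, hence lies in $\W_n^{m-1}$ with $m - 1 \ge 4$; by the induction hypothesis on $m$ its reduced homology vanishes in degrees $\le 3$, and therefore so does that of $\vr{H}{3}$.

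For the inductive step $k > 1$, \Cref{claim:base}(i) furnishes an index $q$ with $Y := \bigcup_{j \ne q} X_j \in \W_n^m$ (a presentation with $k-1$ pieces) and $X_q \cap Y \in \W_n^{m-1}$, the intersection being nonempty. I would feed $X = Y \cup X_q$ into the reduced Mayer--Vietoris sequence
\[
\cdots \to \tilde{H}_j(X_q \cap Y) \to \tilde{H}_j(Y) \oplus \tilde{H}_j(X_q) \to \tilde{H}_j(X) \to \tilde{H}_{j-1}(X_q \cap Y) \to \cdots .
\]
For $0 \le j \le 3$ the term $\tilde{H}_j(Y)$ vanishes by the secondary induction on $k$, and $\tilde{H}_j(X_q)$ vanishes by the single-piece case just treated. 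The connecting term $\tilde{H}_{j-1}(X_q \cap Y)$ involves degrees $j - 1 \le 2$ of an element of $\W_n^{m-1}$: if $m \ge 5$ it vanishes by the induction hypothesis on $m$ (which kills degrees $\le 3$, so in particular $\{0,1,2\}$), while if $m = 4$ it vanishes by the dimension-three base fact recorded above. Hence $\tilde{H}_j(X) = 0$ for $0 \le j \le 3$; connectedness of $Y$ and $X_q$ (elements of $\W_n^m$ are connected, as noted after the definition of the class) together with $X_q \cap Y \ne \emptyset$ secures the degree-$0$ end of the sequence.

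The only genuinely delicate point is the bookkeeping of the double induction. One must check that it is well-founded: the $m$-level hypothesis is always invoked at $m-1$ (both for $\partial(\vr{H}{3})$ and for the intersection term $X_q \cap Y$) and is therefore already established, while within a fixed $m$ the piece $X_q$ is the settled $k=1$ case and $Y$ is the $(k-1)$-piece case. I expect no analytic difficulty beyond this: all the substantive combinatorics has been front-loaded into \Cref{claim:base} and \Cref{lem:homology}, and the present argument is essentially an organized Mayer--Vietoris accounting on top of them.
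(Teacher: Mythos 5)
Your proposal is correct and follows essentially the same route as the paper: a double induction on $m$ and on the number of pieces, with the single-piece case for $m\geq 5$ handled by pushing cycles into $\partial(\vr{H}{3})\in\W_n^{m-1}$ via \Cref{lem:homology}, and the multi-piece case handled by \Cref{claim:base}(i) plus Mayer--Vietoris, bottoming out at \Cref{prop:upto3} for the $\W_n^3$ intersections when $m=4$. Your explicit remark that an element of $\W_n^3$ is carried to an element of $\W_4^3$ by the cube isomorphism is a detail the paper leaves implicit, but the argument is the same.
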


\begin{proof}
	Let $X = X_1  \cup \ldots \cup X_p \in \W_n^m$.
	Proof is by induction on $m$ and $p$. Let $m = 4$. 
	If $p =1$, then $X \cong \Delta_4 \simeq S^{7}$. Hence $\tilde{H}_j(X) = 0$ for $j \leq 3$. Let $p \geq 2$.   Inductively assume that for any $l < p$ and  $i_1, \ldots, i_l \in [p]$,  if $X_{i_1} \cup \ldots \cup X_{i_l} \in \W_n^{m}$, then $\tilde{H}_j( X_{i_1} \cup \ldots \cup X_{i_l}) = 0$ for  $j \leq3$. From \Cref{claim:base} $(i)$, there exists $t \in [p]$ such that 
	$\bigcup\limits_{i \neq t} X_i \in \W_n^{m}$ and $X_t \cap \bigcup\limits_{i \neq t} X_i \in \W_n^{m-1}$. Without loss of generality assume that $t = p$ and 
	$Y = X_1 \cup \ldots \cup X_{p-1}$. Then 
	$\tilde{H}_{j}(Y)  = 0$ and $ \tilde{H}_{j}(X_p) = 0$ for $j \leq3$.  By Mayer-Vietoris sequence for homology, we have
	$$ \cdots  \longrightarrow  \tilde{H}_{j}(Y)  \oplus \tilde{H}_{j}(X_p) \longrightarrow \tilde{H}_{j}(X) \longrightarrow \tilde{H}_{j-1}(Y\cap X_p) \longrightarrow  \tilde{H}_{j-1}(Y)  \oplus \tilde{H}_{j-1}(X_p) \longrightarrow \cdots  $$
	Since $Y \cap X_p \in \W_n^3$, $\tilde{H}_j(Y \cap X_{p}) = 0$ for $j \leq 2$ by \Cref{prop:upto3}, . Thus, we conclude that 
	$\tilde{H}_j(X) = 0$ for $j \leq 3$. So for $m = 4$, result is true. Let $m \geq 5$. 
	
	{\it \large Induction hypothesis 1:} For any $4 \leq r < m$ and $ j \leq 3$, if $Y \in \W_n^r$, then   $\tilde{H}_j(Y) = 0$.
	
	Let $ 4 \leq r < m$ and $Z \in \W_n^{r+1}$. Then  $Z= Z_1  \cup \ldots \cup Z_q$ for some $q$, where each $Z_i$ is the  Vietoris-Rips complex of a $r+1$-dimensional cube subgraph of $\I_n$. We show that $\tilde{H}_j(Z) = 0$ for $j \leq 3$. Proof is by induction on $q$. If $q = 1$, then $Z \cong \Delta_{r+1}$. Since $r+1 \geq 5$, from \Cref{lem:homology}, any $i$-cycle $c$ in $Z$ is homologous to an  $i$-cycle $\tilde{c}$ in $  \partial(Z)$ for $i \leq 3$.  Hence it is enough to show that $\tilde{H}_j(\partial(Z)) = 0$ for $j \leq 3$.  Clearly,  $\partial(Z)\in \W_n^{r}$. From induction hypothesis $1$, we get that $\tilde{H}_j (\partial(Z)) = 0$ for $j\leq 3$.  So assume that $q \geq 2$.
	
	{\it \large Induction hypothesis 2:} For any $l < q, i_1, \ldots, i_l \in [q]$ and $j \leq 3$, if  $Z_{i_1} \cup  \ldots \cup  Z_{i_l} \in \W_n^{r+1}$, then $\tilde{H}_j (Z_{i_1} \cup  \ldots \cup  Z_{i_l}) = 0$.
	
	From \Cref{claim:base} $(i)$, there exists $ t \in [q]$ such that 	$\bigcup\limits_{i \neq t} Z_i \in \W_n^{r+1}$ and  $Z_t \cap \bigcup\limits_{t \neq j} Z_j \in \W_n^{r}$. Without loss of generality assume that $t = q$.	Let $U = Z_1 \cup \ldots \cup Z_{q-1}$.  Then $U \in \W_n^{r+1}$ and by induction hypothesis $2$, $\tilde{H}_j(U) = 0 $ for $0 \leq j \leq 3$.   By Mayer-Vietoris sequence for homology, we have 
	$$ \cdots  \longrightarrow  \tilde{H}_{j}(U)  \oplus \tilde{H}_{j}(Z_q) \longrightarrow \tilde{H}_{j}(Z) \longrightarrow \tilde{H}_{j-1}(U \cap Z_q) \longrightarrow  \tilde{H}_{j-1}(U)  \oplus \tilde{H}_{j-1}(Z_q) \longrightarrow \cdots  $$

	From induction hypothesis $1$,
	$\tilde{H}_j(U \cap Z_q) = 0$ for $0 \leq j \leq 3$. Therefore, we conclude that $\tilde{H}_j(Z) = 0$ for $0 \leq j \leq 3$. 
	
	Thus, the proof is complete  by induction.   
\end{proof}

\begin{lemma}\label{lem:representation}
	
	Let $n \geq m\geq6$ and  $k \geq 3$. For each $i \in [k]$, let  $X_i$ be the  Vietoris-Rips complex of some $m$-dimensional cube subgraph of $\I_n$.  Let $ Y = \bigcup\limits_{l=1}^{k-2} X_l \cap X_k $ and  $Y' = X_{k-1} \cap X_k$ such that if $Y \cap Y' \neq \emptyset$, then $Y \cap Y' \in \W_n^{m-2}$. Then for each  $x \in \tilde{H}_4(\bigcup\limits_{l=1}^{k-1} X_l \cap X_k) $, there exist $x_1 \in \tilde{H}_4(Y) $  and $x_2 \in \tilde{H}_4(Y') $ such that $x = x_1 + x_2$.
\end{lemma}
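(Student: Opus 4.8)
The plan is to recognize this as a direct application of the Mayer--Vietoris sequence to the decomposition $Y \cup Y'$, the crucial input being the vanishing of $\tilde{H}_3$ of the intersection $Y \cap Y'$ supplied by \Cref{lem:contractible}. First I would record the elementary set-theoretic identities: since $Y = \bigcup_{l=1}^{k-2}(X_l \cap X_k)$ and $Y' = X_{k-1}\cap X_k$, we have $Y \cup Y' = \bigcup_{l=1}^{k-1}(X_l \cap X_k)$ and $Y \cap Y' = \bigcup_{l=1}^{k-2}(X_l \cap X_{k-1}\cap X_k)$. Both $Y$ and $Y'$ are subcomplexes of the simplicial complex $Y \cup Y'$, so the Mayer--Vietoris machinery applies. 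The statement to be proved is precisely that the inclusion-induced map $\tilde{H}_4(Y)\oplus \tilde{H}_4(Y') \to \tilde{H}_4(Y\cup Y')$ is surjective: writing a class as $x = x_1 - x_2$ with $x_1$ coming from $Y$ and $x_2$ from $Y'$, and absorbing the sign into the (group) element $x_2$, yields the desired decomposition $x = x_1 + x_2$.

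Then I would split into two cases according to whether $Y\cap Y'$ is empty. If $Y\cap Y' = \emptyset$, then $Y\cup Y'$ is a disjoint union of the subcomplexes $Y$ and $Y'$; since reduced homology sends disjoint unions to direct sums in every positive degree, the inclusion-induced maps realize an isomorphism $\tilde{H}_4(Y\cup Y') \cong \tilde{H}_4(Y)\oplus\tilde{H}_4(Y')$, and surjectivity is immediate (this also disposes of the degenerate subcases in which $Y$ or $Y'$ is itself empty). If instead $Y\cap Y' \neq \emptyset$, the hypothesis gives $Y\cap Y' \in \W_n^{m-2}$. Because $m\ge 6$ and $m\le n$, the index $m-2$ satisfies $4\le m-2 \le n$, so \Cref{lem:contractible} applies and yields $\tilde{H}_3(Y\cap Y') = 0$. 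The relevant portion of the reduced Mayer--Vietoris sequence then reads
$$\tilde{H}_4(Y)\oplus\tilde{H}_4(Y') \longrightarrow \tilde{H}_4(Y\cup Y') \longrightarrow \tilde{H}_3(Y\cap Y') = 0,$$
and exactness forces the left-hand map to be surjective, as required.

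The argument is essentially formal, so there is little genuine difficulty once the hypotheses are unwound; the content is really hidden in the standing assumption $Y \cap Y' \in \W_n^{m-2}$. The only points demanding care are the bookkeeping that places $m-2$ in the admissible range $[4,n]$ of \Cref{lem:contractible} --- this is exactly why $m \ge 6$ is required and why the conclusion is restricted to degree $4$, the top degree in which \Cref{lem:contractible} controls the intersection --- and the separate treatment of the empty-intersection case, where the reduced Mayer--Vietoris sequence is unavailable and one falls back on the direct-sum decomposition for disjoint unions.
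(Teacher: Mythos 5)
Your proposal is correct and follows essentially the same route as the paper: split on whether $Y\cap Y'$ is empty, use the direct-sum decomposition in the empty case, and otherwise apply the Mayer--Vietoris sequence together with the vanishing $\tilde{H}_3(Y\cap Y')=0$ supplied by \Cref{lem:contractible} (valid since $m-2\geq 4$) to conclude that $\tilde{H}_4(Y)\oplus\tilde{H}_4(Y')\to\tilde{H}_4(Y\cup Y')$ is surjective. The only cosmetic difference is the sign convention in the Mayer--Vietoris map, which you correctly absorb into the group element.
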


\begin{proof}
	Let $X = \bigcup\limits_{l=1}^{k-1} X_l \cap X_k$.  Then $X = Y \cup Y'$.  Let $x \in  \tilde{H}_{4}(X)$.   If $Y \cap Y' = \emptyset$, then $\tilde{H}_4(X) = \tilde{H}_4(Y) \oplus \tilde{H}_4(Y')$. Hence $x=x_1 + x_2$ for some  $x_1 \in  \tilde{H}_{4}(Y)$ and  $x_2 \in \tilde{H}_{4}(Y')$. Let $Y \cap Y' \neq \emptyset$. By Mayer-Vietoris sequence for homology, we get
	$$
	\cdots  \longrightarrow \tilde{H}_{4}(Y)  \oplus \tilde{H}_{4}(Y') \  {\overset{\psi} \longrightarrow } \tilde{H}_{4}(X) {\overset{\phi} \longrightarrow }  \tilde{H}_{3}(Y \cap Y')  \longrightarrow    \tilde{H}_{3}(Y)  \oplus \tilde{H}_{3}(Y') \longrightarrow  \cdots	
	$$
	
	Since $m-2 \geq 4$, 	from \Cref{lem:contractible}  $\tilde{H}_{3}(Y \cap Y') = 0 $. Hence 
	$\psi: \tilde{H}_{4}(Y)  \oplus \tilde{H}_{4}(Y') \longrightarrow \tilde{H}_{4}(X)$ given by $(\alpha, \beta) \mapsto \alpha + \beta$ is surjective. Thus $x=x_1 + x_2$ for some  $x_1 \in  \tilde{H}_{4}(Y)$ and  $x_2 \in \tilde{H}_{4}(Y')$.
\end{proof}

\begin{lemma}\label{lem:injective} 
	
	Let $n \geq m\geq6$ and $k \geq 2$. Let  $X = X_1 \cup \ldots \cup X_k$, where each $X_i$ is Vietoris-Rips complex of some $m$-dimensional cube subgraph of $\I_n$.  Further, assume that  if $k \geq 3$, then  for any set $A \subseteq [k-1]$ such that  $1 \in A$ and $|A| \geq 2$, there  exists $p \in A \setminus \{1\}$ such that 
	$\bigcup\limits_{i \in A \setminus \{p\}} (X_i  \cap X_p \cap X_k) \neq \emptyset$ implies 	$\bigcup\limits_{i \in A \setminus \{p\}} (X_i  \cap X_p \cap X_k) \in \W_n^{m-2}$. Then the map  $i_{\ast} : \tilde{H}_4(\bigcup\limits_{l=1}^{k-1} X_l \cap X_k) \to  \tilde{H}_4(X_k)$ induced by the inclusion $\bigcup\limits_{l=1}^{k-1} X_l \cap X_k \xhookrightarrow{} X_k$, is injective.
\end{lemma}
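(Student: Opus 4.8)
The plan is to induct on $k$, writing $W=\bigcup_{l=1}^{k-1}(X_l\cap X_k)$ for the source of $i_\ast$ and $i\colon W\hookrightarrow X_k$ for the inclusion. Each nonempty $X_l\cap X_k$ is the Vietoris--Rips complex of a codimension-one cube subgraph of the $m$-cube carrying $X_k$, i.e.\ a ``facet'' of $X_k\cong\Delta_m$. For the base case $k=2$ we have $W=X_1\cap X_k$, a single facet; by \Cref{lem:retraction} (applied inside $X_k\cong\vr{\I_m}{3}$) there is a retraction $r\colon X_k\to W$, so $r_\ast\circ i_\ast=\mathrm{id}$ on $\tilde{H}_4(W)$ and $i_\ast$ is injective. (If $W=\emptyset$ there is nothing to prove.)

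For the inductive step ($k\ge 3$) I would first use the hypothesis with $A=[k-1]$ to select an index $p\in[k-1]\setminus\{1\}$, and after relabelling assume $p=k-1$. Set $Y'=X_{k-1}\cap X_k$ and $Y=\bigcup_{l=1}^{k-2}(X_l\cap X_k)$, so that $W=Y\cup Y'$ and $Y\cap Y'=\bigcup_{i\le k-2}(X_i\cap X_{k-1}\cap X_k)$ lies in $\W_n^{m-2}$ whenever it is nonempty. Because the subfamily $\{X_1,\dots,X_{k-2},X_k\}$ retains the distinguished index $1$ and inherits the $A$-condition (the quantifier ranges over all $A\subseteq[k-1]$, hence over $A\subseteq[k-2]$), the inductive hypothesis applies and gives that $(i_Y)_\ast\colon\tilde{H}_4(Y)\to\tilde{H}_4(X_k)$ is injective, where $i_Y\colon Y\hookrightarrow X_k$.

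Now let $x\in\tilde{H}_4(W)$ with $i_\ast x=0$. Since $Y\cap Y'\in\W_n^{m-2}$ and $m-2\ge 4$, \Cref{lem:representation} applies and writes $x=(j_Y)_\ast\alpha+(j_{Y'})_\ast\beta$, where $\alpha\in\tilde{H}_4(Y)$, $\beta\in\tilde{H}_4(Y')$ and $j_Y,j_{Y'}$ are the inclusions into $W$. Let $r\colon X_k\to Y'$ be the retraction of \Cref{lem:retraction} flattening the single coordinate that cuts out the facet $Y'$. Applying $r_\ast$ to $i_\ast x=0$ and using $r\circ(i\circ j_{Y'})=\mathrm{id}_{Y'}$ gives $\beta=-(r|_Y)_\ast\alpha$. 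Provided the retraction carries $Y$ into $Y\cap Y'$ (the crux, discussed below), $r|_Y$ factors as $Y\xrightarrow{\bar s}Y\cap Y'\xrightarrow{\iota}Y'$ with $\iota\colon Y\cap Y'\hookrightarrow Y'$; writing also $\kappa\colon Y\cap Y'\hookrightarrow Y$, the elementary identity $(j_{Y'})_\ast\iota_\ast=(j_Y)_\ast\kappa_\ast$ (both are induced by $Y\cap Y'\hookrightarrow W$) lets me collapse the expression to $x=(j_Y)_\ast\eta$ for the single class $\eta=\alpha-\kappa_\ast\bar{s}_\ast\alpha\in\tilde{H}_4(Y)$. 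Then $0=i_\ast x=(i_Y)_\ast\eta$, and injectivity of $(i_Y)_\ast$ forces $\eta=0$, whence $x=0$.

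The hard part is exactly the parenthetical ``crux'': the retraction onto $Y'$ sends $Y$ into $Y\cap Y'$ only if no remaining piece $X_l\cap X_k$ ($l\le k-2$) is the facet of $X_k$ \emph{opposite} to $Y'$ (same coordinate direction, opposite value), for such a mirror facet is flattened bijectively onto all of $Y'$ rather than into $Y\cap Y'$, and the factorization above breaks. Note that the $\W_n^{m-2}$-condition supplied by the hypothesis does \emph{not} by itself exclude this mirror facet (it contributes an empty triple intersection). Ruling the configuration out, or handling it by a finer decomposition that isolates the mirror facet, is where I expect the real work to be, drawing on the precise origin of the hypothesis in \Cref{claim:base}(ii) and on the one-sidedness built into $\W_n^m$ (the absence of the facet opposite to the distinguished $X_1$); the remaining bookkeeping of coordinate directions should then be routine.
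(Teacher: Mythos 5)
Your setup coincides with the paper's own proof: the same induction (the paper phrases it as an induction over subsets $B\subseteq[k-1]$ containing $1$, which after relabelling is your induction on the number of pieces), the same base case via the retraction of \Cref{lem:retraction} onto the single facet $X_1\cap X_k\cong\Delta_{m-1}$, the same choice of $p$ supplied by the hypothesis, and the same decomposition $x=x_1+x_2$ with $x_1\in\tilde{H}_4(Y)$, $x_2\in\tilde{H}_4(Y')$ coming from \Cref{lem:representation}. The divergence --- and the gap --- is in the last step. Your plan to apply the coordinate-flattening retraction $r\colon X_k\to Y'$ and then factor $r|_Y$ through $Y\cap Y'$ fails in exactly the configuration you single out: if some $X_l\cap X_k$ with $l\le k-2$ is the facet of $X_k$ opposite to $Y'$, then $r$ carries it onto all of $Y'$ rather than into $Y\cap Y'$, the factorization $Y\to Y\cap Y'\hookrightarrow Y'$ does not exist, and the $\W_n^{m-2}$ hypothesis does not exclude this (it only constrains the nonempty triple intersections, and the mirror facet contributes an empty one). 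Since you leave this unresolved, the proof as proposed is incomplete.

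The paper closes the induction step without ever retracting onto $Y'$. Writing $j_*$ for the (injective, by the induction hypothesis) map $\tilde{H}_4(Y)\to\tilde{H}_4(X_k)$, it argues: $i_*(x)=i_*(x_1)+i_*(x_2)=0$ forces $i_*(x_2)=-j_*(x_1)$, which lies in the subgroup $\mathrm{im}(j_*)$; one then pulls $x_2$ back to an element of $\tilde{H}_4(Y)$, cancels it against $x_1$ there, and concludes $x=0$ in $\tilde{H}_4(Z\cap X_k)$ (the case $x_1=0$, $x_2\neq0$ is handled symmetrically, using that $\tilde{H}_4(X_p\cap X_k)\to\tilde{H}_4(X_k)$ is injective because $X_p\cap X_k$ is a single facet admitting a retraction from $X_k$). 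So the paper replaces your geometric factorization by an algebraic identification inside $\tilde{H}_4(X_k)$; be aware that the transfer of the cancellation from $\tilde{H}_4(X_k)$ back to $\tilde{H}_4(Z\cap X_k)$ is itself stated quite tersely in the paper (it amounts to tracking the Mayer--Vietoris kernel for $Y\cup Y'$), but in any case it is a different mechanism from the retraction onto $Y'$ that your argument hinges on, and it is precisely the piece your proposal is missing.
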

\begin{proof}
	Let $Y = \bigcup\limits_{l=1}^{k-1} X_l$. If $Y \cap X_k= \emptyset$, then result is vacuously true.  So assume that $Y \cap X_k \neq \emptyset$. 	If $k = 2$, then $ Y \cap X_k \cong \Delta_{m-1}$. 	From \Cref{lem:retraction},  there exists a retraction $X_k\to Y \cap X_k$ and therefore $i_{\ast} : \tilde{H}_4(Y \cap X_k) \to  \tilde{H}_4(X_k)$ is injective.

	Let $k \geq 3$ and  inductively assume that for any $1 \leq t < k-1$ and $ 1 \in \{j_1, \ldots, j_t\} \subseteq[k-1]$, the map  $i_{\ast} : \tilde{H}_4(\bigcup\limits_{l=1}^{t} X_{j_l} \cap X_{k} )\to  \tilde{H}_4(X_k)$ induced by the inclusion $\bigcup\limits_{l=1}^{t} X_{j_l} \cap X_{k} \xhookrightarrow{} X_k$, is injective.
	
	Let $B = \{j_1, \ldots , j_{t+1}\} \subseteq [k-1]$ such that $1 \in B$. Let $Z = \bigcup\limits_{l=1}^{t+1} X_{j_l}  $.   We show that the map $i_{\ast}: \tilde{H}_4 (Z \cap X_k) \to \tilde{H}_4(X_k)$ is injective.

	Let $0 \neq x\in \tilde{H}_4(Z \cap X_k)$. There exists $p \in B$, $p  \neq 1$ such that 
	$\bigcup\limits_{i \in B \setminus \{p\}} (X_i  \cap X_p \cap X_k) \neq \emptyset$ implies 	$\bigcup\limits_{i \in B \setminus \{p\}} (X_i  \cap X_p \cap X_k) \in \W_n^{m-2}$.  From \Cref{lem:representation},  there exist $x_1 \in \tilde{H}_4(\bigcup\limits_{i \in B\setminus\{p\}} X_i \cap X_{k}  )$,  $x_2 \in \tilde{H}_4 (X_{p} \cap X_k )$
	such that $x =x_1 + x_2 $.  Suppose $i_{\ast}(x) = 0$ in $\tilde{H}_4(X_k)$. Since $x \neq 0$, at least one of $x_1$ or $x_2$ is a non zero element of $ \tilde{H}_4(Z \cap X_k)$. Let $x_1 \neq 0$ in $\tilde{H}_4(Z \cap X_k)$. Then $x_1 \neq 0$ in $\tilde{H}_4(\bigcup\limits_{i \in B\setminus\{p\}} X_i \cap X_{k} )$. From induction hypothesis the map $j_{\ast} : \tilde{H}_4(\bigcup\limits_{i \in B\setminus\{p\}} X_i \cap X_{k} ) \to  \tilde{H}_4(X_k)$ induced by the inclusion $j: \bigcup\limits_{i \in B\setminus\{p\}} X_i \cap X_{k}  \to X_k$,  is injective and therefore $j_{\ast}(x_1) \neq 0$. Since $i_{\ast}(x_1) = j_{\ast}(x_1)$, we see that 
	$i_{\ast}(x_1) \neq 0$.  Further, $i_{\ast}(x) = i_{\ast}(x_1+ x_2) = x_1+x_2= 0$ implies that $x_1 = - x_2$. The injectivity of the map  $j_{\ast} : \tilde{H}_4(\bigcup\limits_{i \in B\setminus\{p\}} X_i \cap X_{k} ) \to  \tilde{H}_4(X_k)$ implies that $\tilde{H}_4(\bigcup\limits_{i \in B\setminus\{p\}} X_i \cap X_{k} ) $ is a subgroup of $\tilde{H}_4(X_k)$. Hence  $x_2 \in \tilde{H}_4(\bigcup\limits_{i \in B\setminus\{p\}} X_i \cap X_{k} )$. Therefore $x_1 + x_2 = 0$ in 
	$\tilde{H}_4(\bigcup\limits_{i \in B\setminus\{p\}} X_i \cap X_{k} )$. Hence $x = x_1+ x_2 = 0$ in $\tilde{H}_4(Z \cap X_k )$, a contradiction. By an argument similar as above, we can show that,  if $x_2 \neq 0$, then $x_1 + x_2 = 0$ in $\tilde{H}_4(Z \cap X_k)$, a  contradiction. Thus $x \neq 0$ implies $i_{\ast}(x) \neq 0$. Therefore $i_{\ast}$ is injective. The proof is complete by induction. 
\end{proof}

\begin{lemma} \label{lem:5homology} Let $n \geq 6$. For $X \in \W_n^6$, $\tilde{H}_5(X) = 0$.
\end{lemma}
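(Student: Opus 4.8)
The plan is to induct on the number $k$ of cube complexes appearing in a fixed representation $X = X_1 \cup \cdots \cup X_k \in \W_n^6$, where each $X_i = \vr{H_i}{3}$ for a $6$-dimensional cube subgraph $H_i$ of $\I_n$. Since such an $H_i$ is isometric to $\I_6$, each $X_i \cong \Delta_6 = \vr{\I_6}{3}$. For the base case $k=1$ I would invoke \Cref{thm:adm}: as $\tilde H_5(\vr{\I_6}{3}) = 0$ and $X \cong \Delta_6$, we get $\tilde H_5(X) = 0$. This is the one place the Adamaszek--Adams computation seeds the induction, and it is genuinely needed here because \Cref{lem:homology} cannot push a $5$-cycle off a single $6$-cube (it requires $p \le n-2$, i.e. $n \ge 7$).

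For the inductive step with $k \ge 2$, I would peel off one cube complex using \Cref{claim:base}. For $k = 2$ I apply part $(i)$ to obtain an index $q$ so that $Y := \bigcup_{j \ne q} X_j \in \W_n^6$ is a single cube complex and $X_q \cap Y \in \W_n^5$ (in particular nonempty). For $k \ge 3$ I apply part $(ii)$ (valid since $m=6 \ge 5$) to obtain indices $\lambda, q$ giving the same decomposition $Y = \bigcup_{j \ne q}X_j \in \W_n^6$, $X_q \cap Y \in \W_n^5$, together with the nested triple-intersection property: for every $A \subseteq [k]\setminus\{q\}$ with $\lambda \in A$ and $|A| \ge 2$ there is $p \in A\setminus\{\lambda\}$ with $\bigcup_{i \in A \setminus \{p\}}(X_i \cap X_p \cap X_q) \in \W_n^4$ whenever this union is nonempty. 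The role of $(ii)$ is exactly to match, after relabelling $X_q$ as the last piece and $X_\lambda$ as the first, the hypotheses of \Cref{lem:injective}.

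The homological core is then the reduced Mayer--Vietoris sequence of $X = Y \cup X_q$,
\[
\cdots \to \tilde H_5(Y) \oplus \tilde H_5(X_q) \to \tilde H_5(X) \xrightarrow{\ \partial\ } \tilde H_4(Y \cap X_q) \xrightarrow{\ \psi\ } \tilde H_4(Y) \oplus \tilde H_4(X_q) \to \cdots.
\]
Here $\tilde H_5(Y) = 0$ by the induction hypothesis ($Y$ is a union of $k-1$ cube complexes in $\W_n^6$) and $\tilde H_5(X_q) = \tilde H_5(\Delta_6) = 0$ by the base case, so $\partial$ is injective. On the other hand, \Cref{lem:injective}---in its $k=2$ form via the retraction of \Cref{lem:retraction}, or in its $k \ge 3$ form using the condition supplied by \Cref{claim:base}$(ii)$---shows that the inclusion-induced map $\tilde H_4(Y \cap X_q) \to \tilde H_4(X_q)$ is injective. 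As this is, up to sign, a component of $\psi$, the map $\psi$ is injective, so $\ker\psi = 0$. By exactness the image of $\partial$ equals $\ker\psi = 0$; combined with injectivity of $\partial$ this forces $\tilde H_5(X) = 0$, completing the induction.

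I expect the main obstacle to be bookkeeping rather than a new idea: one must select the peeled cube $X_q$ and the distinguished cube $X_\lambda$ so that \emph{simultaneously} the Mayer--Vietoris splitting has the required form and the nested triple-intersection hypothesis of \Cref{lem:injective} holds, which is precisely what the two bullets of \Cref{claim:base}$(ii)$ are engineered to provide in tandem. The only degenerate case to note is $Y \cap X_q = \emptyset$, which \Cref{claim:base} actually rules out (the intersection lies in $\W_n^5$, hence is nonempty); were it empty, $X$ would be a disjoint union and $\tilde H_5(X) = \tilde H_5(Y) \oplus \tilde H_5(X_q) = 0$ at once.
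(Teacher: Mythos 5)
Your proposal is correct and follows essentially the same route as the paper: the same induction on $k$, the same use of \Cref{claim:base}$(i)$/$(ii)$ to peel off $X_q$ while securing the hypotheses of \Cref{lem:injective}, and the same Mayer--Vietoris argument in which injectivity of $\tilde H_4(Y\cap X_q)\to \tilde H_4(X_q)$ kills the connecting map. The only difference is cosmetic: you spell out the exactness bookkeeping (injectivity of $\partial$ plus $\operatorname{im}\partial=\ker\psi=0$) slightly more explicitly than the paper does.
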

\begin{proof}
	Let $X = X_1 \cup \ldots \cup X_k$, where each $X_i$ is the  Vietoris-Rips complex of a $6$-dimensional cube subgraph of $\I_n$.  If $k= 1$, then $X \cong \Delta_6$ and hence result is true by \Cref{thm:adm}. Let $k > 1$ and assume that for any $l < k$ and $i_1, \ldots, i_l \in [k]$, if $X_{i_1} \cup \ldots \cup X_{i_l} \in \W_n^6$, then $\tilde{H}_5(X_{i_1} \cup \ldots \cup X_{i_l}) = 0$. 
	
	If $k = 2$, then from  \Cref{claim:base} $(i)$, there exists  $q_1  \in [k]$ such that $ \bigcup\limits_{j \neq q_1} X_j \in \W_n^6$ and  $  X_{q_1} \cap \bigcup\limits_{j \neq q_1} X_j \in \W_n^{5}$.
	
	Further, if $ k \geq 3$, then from \Cref{claim:base} $(ii)$ 	 there exist $\lambda, q_2  \in [k]$ such that $ \bigcup\limits_{j \neq q_2} X_j \in \W_n^6$, $  X_{q_2} \cap \bigcup\limits_{j \neq q_2} X_j \in \W_n^{5}$ and  for any subset $A \subseteq [k] \setminus \{q_2\}$  containing $\lambda$,
	there exists $p \in A \setminus \{\lambda\}$ such that $   \bigcup\limits_{i \in A \setminus \{p\}} (X_i  \cap X_p \cap X_{q_2}) \neq \emptyset$ implies  $  \bigcup\limits_{i \in A \setminus \{p\}} (X_i  \cap X_p \cap X_{q_2})  \in \W_n^{4}$.

	Without loss of generality we assume that if $k= 2$, then $ q_1  = k$ and if $k \geq  3$, then  $q_2 = k, \lambda = 1$  and for $A = [k-1], p = k-1$. 
	Let $Y = X_1 \cup \ldots \cup X_{k-1}$.   Then by induction hypothesis $\tilde{H}_5(Y) = 0$ and $\tilde{H}_5(X_k) = 0$.
	By Mayer-Vietoris sequence for homology, we have 
	$$ \cdots  \longrightarrow  \tilde{H}_{5}(Y)  \oplus \tilde{H}_{5}(X_p) \longrightarrow \tilde{H}_{5}(X) \longrightarrow  \tilde{H}_{4}(Y\cap X_p) {\overset{ h_{4} }\longrightarrow}  \tilde{H}_{4}(Y)  \oplus \tilde{H}_{4}(X_p) \longrightarrow \cdots  $$
	
	Using   \Cref{lem:injective}, we conclude that   the map $i_{\ast}:\tilde{H}_{4}(Y \cap X_k) \longrightarrow   \tilde{H}_{4}(X_k)$ induced by the inclusion $Y \cap X_k \xhookrightarrow{} X_k$, is injective and therefore the map  $h_4:\tilde{H}_{4}(Y \cap X_k) \longrightarrow   \tilde{H}_{4}(Y)  \oplus \tilde{H}_{4}(X_k)$ is also injective. 	Since 	$\tilde{H}_5(Y) = 0$ and $\tilde{H}_5(X_k) = 0$, we conclude that $\tilde{H}_5(X) = 0$.
\end{proof}

\begin{lemma}\label{thm:main}
	Let $m \geq 7$.  For any $X \in \W_n^m$ and  $j  \in \{5, 6\}$,  $\tilde{H}_j(X) = 0$.
\end{lemma}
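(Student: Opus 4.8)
The plan is to argue by a double induction: an outer induction on the cube dimension $m \geq 7$ and, for each fixed $m$, an inner induction on the number $k$ of cubes in a representation $X = X_1 \cup \cdots \cup X_k \in \W_n^m$. The workhorse throughout is the Mayer--Vietoris sequence for the decomposition $X = Y \cup X_k$, where, using \Cref{claim:base}, I would choose the cube $X_k$ so that $Y := \bigcup_{j \neq k} X_j \in \W_n^m$ is a union of $k-1$ cubes and $Y \cap X_k \in \W_n^{m-1}$. The two degrees $j = 5$ and $j = 6$ behave quite differently, so I would treat them separately within each step.

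The base case $k = 1$ and the degree $j = 6$ are the routine cases. For $k = 1$ we have $X \cong \Delta_m$: if $m = 7$ then \Cref{thm:adm} gives $\tilde{H}_5(\Delta_7) = \tilde{H}_6(\Delta_7) = 0$ at once, while if $m \geq 8$ both $5$ and $6$ lie in the range $\le m - 2$, so by \Cref{lem:homology} every such cycle of $\Delta_m$ is homologous to one in $\partial(\Delta_m) \in \W_n^{m-1}$, whose $\tilde{H}_5$ and $\tilde{H}_6$ vanish by the outer induction (as $m - 1 \geq 7$). For $k \geq 2$ and $j = 6$, the relevant stretch is $\tilde{H}_6(Y) \oplus \tilde{H}_6(X_k) \to \tilde{H}_6(X) \to \tilde{H}_5(Y \cap X_k)$; the left group vanishes by the inner induction (for $Y$) and the base case (for $X_k \cong \Delta_m$), while $Y \cap X_k \in \W_n^{m-1}$ has $\tilde{H}_5 = 0$ by \Cref{lem:5homology} when $m = 7$ and by the outer induction when $m \geq 8$, so exactness gives $\tilde{H}_6(X) = 0$.

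The degree $j = 5$ with $k \geq 2$ is the main obstacle, since the connecting homomorphism now lands in $\tilde{H}_4(Y \cap X_k)$, and $H_4$ is precisely the degree in which these complexes carry nontrivial homology, so the target cannot be made to vanish. Instead I would imitate \Cref{lem:5homology}: in the sequence $\tilde{H}_5(Y) \oplus \tilde{H}_5(X_k) \to \tilde{H}_5(X) \to \tilde{H}_4(Y \cap X_k) \xrightarrow{h_4} \tilde{H}_4(Y) \oplus \tilde{H}_4(X_k)$ the first group vanishes as before, so it suffices to show $h_4$ injective, for which it is enough that $\tilde{H}_4(Y \cap X_k) \to \tilde{H}_4(X_k)$ be injective. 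I would secure this by relabeling via \Cref{claim:base} $(i)$ when $k = 2$ and via \Cref{claim:base} $(ii)$ when $k \geq 3$, so that the splitting cube becomes $X_k$ (and, for $k \geq 3$, the distinguished cube becomes $X_1$) and the triple intersections $\bigcup_{i \in A \setminus \{p\}}(X_i \cap X_p \cap X_k)$ land in $\W_n^{m-2}$; these are exactly the hypotheses of \Cref{lem:injective}, whose conclusion gives the desired injectivity and hence $\tilde{H}_5(X) = 0$. The crux, and the step I expect to demand the most care, is verifying that \Cref{claim:base} supplies a labeling meeting these precise intersection hypotheses, with the $k = 2$ and $k \geq 3$ subcases separated because \Cref{claim:base} $(ii)$ requires $k \geq 3$.
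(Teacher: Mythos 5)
Your proposal is correct and follows essentially the same route as the paper: a double induction (outer on $m$, inner on the number of cubes), handling the single-cube case via \Cref{thm:adm} for $m=7$ and via \Cref{lem:homology} plus the outer induction for $m\geq 8$, killing $\tilde{H}_6$ directly from the Mayer--Vietoris sequence, and killing $\tilde{H}_5$ by using \Cref{claim:base} and \Cref{lem:injective} to make the connecting map into $\tilde{H}_4(Y\cap X_k)$ injective. If anything, your explicit separation of the $k=2$ and $k\geq 3$ cases when invoking \Cref{claim:base} is slightly more careful than the paper's write-up.
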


\begin{proof}
	Let $X = X_1 \cup  \ldots \cup X_p$, where each $X_i$ is the  Vietoris-Rips complex of an $m$-dimensional cube subgraph of $\I_n$.  	Proof is by induction on $m$ and $p$.
	Let $m = 7$ . We  show that 
	$\tilde{H}_j(X) = 0$ if $j  \in \{5, 6\}$.
	
	Proof is by induction on $p$. If $p = 1$, then $X \simeq \Delta_7$ and therefore  result follows from \Cref{thm:adm}.
	Let $p > 1$. Inductively assume that for any $l < p$ and $i_1, \ldots, i_l \in [p]$, if $X_{i_1} \cup \ldots \cup X_{i_l} \in \W_n^7$, then $\tilde{H}_j(X_{i_1} \cup \ldots \cup X_{i_l}) = 0$ for $j \in \{5, 6\}$. 
	From \Cref{claim:base} $(i)$, there exists $ t \in [p]$ such that $\bigcup\limits_{j \neq t} X_j \in \W_n^{7}$  and $X_t \cap \bigcup\limits_{j\neq t} X_j \in \W_n^{6}$. Without loss of generality assume that $t = p$. Let $Y = X_1 \cup \ldots \cup X_{p-1}$.  Then by induction hypothesis 
	$\tilde{H}_j(Y) = 0$ and $\tilde{H}(X_p) = 0$ for  $j  \in \{5, 6\}$.   By Mayer-Vietoris sequence for homology, we have 
	$$ \cdots  \longrightarrow  \tilde{H}_{j}(Y)  \oplus \tilde{H}_{j}(X_p) \longrightarrow \tilde{H}_{j}(X) \longrightarrow \tilde{H}_{j-1}(Y\cap X_p) {\overset{ h_{j-1}} \longrightarrow}  \tilde{H}_{j-1}(Y)  \oplus \tilde{H}_{j-1}(X_p) \longrightarrow \cdots  $$
	
	Since $Y \cap X_p \in \W_n^6$, 	$\tilde{H}_5(Y \cap X_p) = 0$ by \Cref{lem:5homology}. If $j = 6$, then since  $\tilde{H}_6(Y) = 0, \tilde{H}_6(X_p) = 0 $ and $\tilde{H}_5(Y \cap X_p) = 0 $, we see that $\tilde{H}_6(X) = 0$.   Using \Cref{claim1} $(ii)$ and  \Cref{lem:injective}, we conclude that the map $i_{\ast}:\tilde{H}_{4}(Y \cap X_p) \longrightarrow   \tilde{H}_{4}(X_p)$ induced by the inclusion $Y \cap X_p \xhookrightarrow{} X_p$, is injective and therefore the map  $h_4:\tilde{H}_{4}(Y \cap X_p) \longrightarrow   \tilde{H}_{4}(Y)  \oplus \tilde{H}_{4}(X_p)$ is also injective.  If $j= 5$, then since $\tilde{H}_5(Y) = 0, \tilde{H}_5(X_p) = 0 $, we conclude that   $\tilde{H}_5(X) = 0$. 	Hence result is true for $m = 7$, {\it i.e.,} 	for any $X \in \W_n^7$, 
	$\tilde{H}_j(X) = 0$ for $j  \in \{5, 6\}$. Now let $m \geq 8$.
	
	{\it \large Induction hypothesis 1:} For any $7 \leq l < m$ and $j \in \{5, 6\}$,  if  $X \in \W_n^l$, then 
	$\tilde{H}_j(X) = 0$. 
	
	Let $7 \leq l < m$ and suppose $Z \in \W_n^{l+1}$. Let $Z = Z_1 \cup \ldots \cup Z_q$, where each $Z_i$ is the  Vietoris-Rips complex of an $l+1$-dimensional cube subgraph of $\I_n$. We show that $\tilde{H}_j(Z) = 0$ for $j \in \{5, 6\}$.
	
	Proof is by induction on $q$.	If $q = 1$, then $Z \simeq \Delta_{l+1}$. Since $l \geq 7$,  from \Cref{lem:homology}, any $j$-cycle $c$ in $Z$ is homologous to a $j$-cycle $\tilde{c}$ in $ \partial(Z)$ for $j \in \{5, 6\}$.  Hence it is enough to show that $\tilde{H}_j(\partial(Z)) = 0$ for $j \in \{5, 6\}$.  Observe that
	$\partial(Z) \in \W_n^l$ and therefore by induction hypothesis $1$, 
	$\tilde{H}_j(\partial(Z)) = 0$ for $j \in \{5, 6\}$. 
	Let $q > 1$.
	
	{\it \large Induction hypothesis 2:}  For any $t < q, i_1, \ldots, i_t \in [q]$ and $j \in \{5, 6\}$, if $X_{i_1} \cup \ldots \cup X_{i_t} \in \W_n^{l+1}$, then  $\tilde{H}_j(X_{i_1} \cup \ldots \cup X_{i_t}) = 0$.
	
	From \Cref{claim:base} $(i)$, there exists $ s \in [q]$ such that $\bigcup\limits_{j \neq s} Z_j \in \W_n^{l+1}$  and $Z_s \cap \bigcup\limits_{ j \neq s} Z_j \in \W_n^{l}$. Without loss of generality assume that $s= q$.  	Let $U= Z_{1} \cup \ldots \cup Z_{q-1} $. By induction hypothesis $2$,
	$\tilde{H}_j(U) = 0$ and $\tilde{H}_j(Z_q) = 0$ for $j  \in \{5, 6\}$.  
	By Mayer-Vietoris sequence for homology, we have 
	$$ \cdots  \longrightarrow  \tilde{H}_{j}(U)  \oplus \tilde{H}_{j}(Z_q) \longrightarrow \tilde{H}_{j}(Z) \longrightarrow \tilde{H}_{j-1}(U \cap Z_q) {\overset{ h_{j-1}} \longrightarrow}  \tilde{H}_{j-1}(U)  \oplus \tilde{H}_{j-1}(Z_q) \longrightarrow \cdots  $$
	
	Since  $U\cap Z_q \in \W^l_n$,  from  induction hypothesis  $1$,
	$\tilde{H}_j(U\cap Z_q) = 0$ for  $j\in \{5, 6\}$.	If $j= 6$, then since $\tilde{H}_6(U) = 0, \tilde{H}_6(Z_q) = 0 $ and $\tilde{H}_5(U \cap Z_q) = 0 $, we see that  $\tilde{H}_6(Z) = 0$. If $j = 5$, then since $\tilde{H}_5(U) = 0, \tilde{H}_5(Z_q) = 0 $ and the map $h_4: \tilde{H}_{4}(U \cap Z_q) \longrightarrow   \tilde{H}_{4}(U)  \oplus \tilde{H}_{4}(Z_q)$ is injective by \Cref{lem:injective}, we get that  $\tilde{H}_5(Z) = 0$. 
	
	This completes the proof.
\end{proof}
We are now ready to prove main result of this section. 

\begin{proof}[Proof of \Cref{thm:main3}]
	We must show that for $n \geq 5$, $\tilde{H}_i(\vr{\I_n}{3};\Z) \neq 0$ if and only if $i \in \{4, 7\}$. 	Using  \Cref{thm:collapsibility} and \Cref{prop:collapsibilitysubcomplex}, we see that $\Delta_n$ is homotopy equivalent to a subcomplex of dimension less than $8$. Hence $\tilde{H}_i(\Delta_n) = $ for all $i \geq 8$. Let $X$ be the Vietoris-Rips complex of a $4$-dimensional cube subgraph of $\I_n$. Then using \Cref{lem:retraction}, there exists a retraction  $r:\Delta_n \to X$. Since $\Delta_4 \cong S^7$ and $X \cong \Delta_4$, we see that $\tilde{H}_7(X) \neq 0$.  Further, since $r_{\ast}: \tilde{H}_7(\Delta_n) \to \tilde{H}_7(X)$ is surjective, $\tilde{H}_7(\Delta_n) \neq 0$.  
	
	If $n \leq 6$, then result follows from \Cref{thm:adm}. So assume that $n \geq 7$.
	Since $\Delta_n \in \W_n^n$, \Cref{thm:main} implies that  $\tilde{H}_j(\Delta_n) = 0$ for $j \in\{5, 6\}$.   Let $Y$ be the Vietoris-Rips complex of a $5$-dimensional cube subgraph of $\I_n$. From \Cref{lem:retraction}, there exists a retraction  $r_1:\Delta_n \to Y$. Since $Y \cong \Delta_5$, using \Cref{thm:adm} we conclude that $\tilde{H}_4(\Delta_n) \neq 0$. From \Cref{lem:contractible}, $\tilde{H}_i(\Delta_n) = 0$ for $i \leq 3$. This completes the proof. 
\end{proof}

\section{Future Directions}

In \Cref{thm:main3}, we have shown that $\vr{\I_n}{3}$ has non-trivial homology only in dimensions $4$ and $7$. Further,  the complex $\vr{\I_n}{2}$ is homotopy equivalent to a wedge sum of $3$-spheres.  For $r \in \{2, 3\}$, since $\vr{\I_n}{2}$  has non trivial homology only  in dimension $i \in \{r+1, 2^r-1\}$, we make the following conjecture.
\begin{conj}
	For $n \geq r+2$,  $\tilde{H}_i(\vr{\I_n}{r}; \Z) \neq 0$ if and only if $i \in \{r+1, 2^r-1\}$.
\end{conj}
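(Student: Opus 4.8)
The plan is to split the (conjectured) statement into four independent parts, each generalising one ingredient of the $r\in\{2,3\}$ arguments above: a top-degree vanishing bound from collapsibility, two explicit lower bounds obtained by retraction onto small subcubes, and vanishing in the intermediate range via an inductive Mayer--Vietoris scheme. Write $\Delta_n^{(r)}:=\vr{\I_n}{r}$. The two distinguished degrees should arise respectively from a spherical facet at dimension $2^r-1$ and from a small nonbounding cycle at dimension $r+1$, while every other degree should be forced to vanish. Concretely, the decomposition is: $\tilde{H}_i=0$ for $i\ge 2^r$ (collapsibility), $\tilde{H}_{2^r-1}\ne 0$ and $\tilde{H}_{r+1}\ne 0$ (retraction), and $\tilde{H}_i=0$ for $0\le i\le r$ and $r+2\le i\le 2^r-2$ (the inductive core).

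First I would establish that $\Delta_n^{(r)}$ is $2^r$-collapsible, generalising \Cref{thm:collapsibility2} and \Cref{thm:collapsibility}. The required input is a characterisation of maximal simplices analogous to \Cref{thm:maximalsimplex}: by Kleitman's theorem a simplex of $\Delta_n^{(r)}$ has at most $\sum_{t=0}^{\lfloor r/2\rfloor}\binom{n}{t}$ vertices once $n$ is large, and the maximal ones should be unions of small balls around a vertex together with bounded ``$K$-type'' correction sets. Ordering these maximal simplices (neighbourhood-type first, as in the proofs of \Cref{thm:collapsibility2} and \Cref{thm:collapsibility}) and bounding the length of each minimal exclusion sequence by $2^r$ through a neighbourhood-saturation argument (generalising \Cref{lem:4neighbour}), \Cref{thm:minimalexclusion} then yields $2^r$-collapsibility, and \Cref{prop:collapsibilitysubcomplex} gives a homotopy model of dimension $<2^r$, so $\tilde{H}_i(\Delta_n^{(r)})=0$ for all $i\ge 2^r$.

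Next come the lower bounds. The clean one is degree $2^r-1$: in $\I_{r+1}$ every pair of vertices is at distance $\le r$ except the $2^r$ antipodal pairs (at distance $r+1$), so $\vr{\I_{r+1}}{r}$ is the join of $2^r$ copies of $S^0$, hence homeomorphic to $S^{2^r-1}$; this recovers $\Delta_4\cong S^7$ at $r=3$. Generalising \Cref{lem:retraction}, for $n\ge r+1$ there is a distance-nonincreasing retraction of $\Delta_n^{(r)}$ onto the complex of any $(r+1)$-dimensional subcube, and since a retraction induces a surjection on homology with target $S^{2^r-1}$, we obtain $\tilde{H}_{2^r-1}(\Delta_n^{(r)})\ne 0$. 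For degree $r+1$ I would isolate a base complex, conjecturally $\vr{\I_{r+2}}{r}$, carrying a nonbounding $(r+1)$-cycle, and propagate it by retracting onto an $(r+2)$-dimensional subcube (for $r=3$ this is exactly the use of $\vr{\I_5}{3}$ via \Cref{thm:adm}). Producing this base cycle \emph{explicitly} for general $r$, rather than by a finite computation, is the first point where a genuinely new idea beyond the present paper is required.

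The heart of the matter is vanishing in the band $r+2\le i\le 2^r-2$ (and in $0\le i\le r$). I would reconstruct the class $\W_n^m$ of unions of VR complexes of $m$-dimensional subcubes with the same nesting condition, prove the structural splitting of \Cref{claim:base}, and generalise the cycle-pushing \Cref{lem:homology} so that every $p$-cycle with $p$ below the top range is homologous to one in the boundary $\partial$. A double induction on the cube dimension $m$ and on the number of pieces, driven by the Mayer--Vietoris sequence exactly as in \Cref{lem:contractible}, \Cref{lem:5homology} and \Cref{thm:main}, then pushes each intermediate degree down to a subcube of dimension $\le r+1$, where the homology is already known. The main obstacle, and the step I expect to be genuinely hard, is the injectivity statement generalising \Cref{lem:injective}: the Mayer--Vietoris argument only annihilates the connecting maps if the inclusion of the overlap $\bigcup_{l<k}X_l\cap X_k$ into $X_k$ is injective on the relevant homology. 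For $r=3$ only $\tilde{H}_4$ survives below the top, which keeps the bookkeeping manageable; for general $r$ one must control the entire band $r+1,\dots,2^r-2$ simultaneously, so both the inductive hypothesis and the injectivity lemma must be upgraded to a statement about the full intermediate homology of members of $\W_n^m$, and one must track precisely which of these groups can be nonzero on the overlaps. Obtaining a clean maximal-simplex characterisation (the analog of \Cref{thm:maximalsimplex}), on which both the cycle-pushing lemma and the injectivity lemma rest, is the combinatorial bottleneck that grows rapidly with $r$ and is where I would expect the bulk of the work to lie.
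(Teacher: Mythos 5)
You should first note that the statement you are addressing is Conjecture~1 in the paper's closing section: the paper does \emph{not} prove it, and in fact poses it (together with the $2^r$-collapsibility statement your first step requires, which is the paper's Conjecture~2) precisely because the $r=3$ machinery is not known to generalize. So there is no proof in the paper to compare against; the question is whether your proposal closes the conjecture on its own, and it does not, although its unconditional fragments are correct. Your join decomposition is right: two vertices of $\I_{r+1}$ are at distance greater than $r$ if and only if they are antipodal, so $\vr{\I_{r+1}}{r}$ is the join of $2^r$ copies of $S^0$, hence homeomorphic to $S^{2^r-1}$; the coordinate-projection retraction of \Cref{lem:retraction} is distance-nonincreasing and so works verbatim at any scale; and since a retraction is surjective on homology, these two facts genuinely prove $\tilde{H}_{2^r-1}(\vr{\I_n}{r};\Z)\neq 0$ for all $n\geq r+1$. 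That part is a complete argument for one quarter of the conjecture, and it is the same technique the paper uses for $r=3$.

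The remaining three steps are programs with genuine gaps, two of which you flag yourself. (a) The $2^r$-collapsibility you invoke is exactly the paper's Conjecture~2; Kleitman's theorem yields only the cardinality bound $\sum_{t=0}^{\lfloor r/2\rfloor}\binom{n}{t}$, not the structural classification of facets (the analogue of \Cref{thm:maximalsimplex}) on which the ordering of facets, the saturation argument generalizing \Cref{lem:4neighbour}, and the minimal-exclusion bound via \Cref{thm:minimalexclusion} all rest; already at $r=4$ facets can involve radius-$2$ balls around a vertex and no such classification is known. (b) Nonvanishing in degree $r+1$: for $r=3$ this rests on the computer calculation of \Cref{thm:adm} applied to $\I_5$, and, as you correctly note, exhibiting a nonbounding $(r+1)$-cycle in $\vr{\I_{r+2}}{r}$ for general $r$ is a new idea that your proposal does not supply. (c) The band $r+2\leq i\leq 2^r-2$: the paper's double induction succeeds for $r=3$ only because a single group, $\tilde{H}_4$, can be nonzero on overlaps below the top dimension, so \Cref{lem:injective} needs to control one degree; for general $r$ the Mayer--Vietoris connecting homomorphisms involve the entire band of homology of members of $\W_n^{m-1}$ and $\W_n^{m-2}$, and the simultaneous injectivity statement you would need is precisely what is missing. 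Note moreover that the cycle-pushing \Cref{lem:homology} itself is proved using the explicit facet types $N(v)\cup K_v^{i,j,k}$ and $N(v)\cup N(w)$, so it too is conditional on (a). In short, your proposal is a faithful and plausible generalization of the paper's method with correctly identified bottlenecks, but steps (a)--(c) remain open, which is exactly why the paper states this as a conjecture rather than a theorem.
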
 

The following is a natural question to ask.

\begin{question} Let $n \geq r+2$. Is  $\vr{\I_n}{r}$ homotopy equivalent to a wedge sum of spheres of dimensions $r+1$ and $2^{r}-1$ ?
\end{question}

In Theorems \ref{thm:collapsibility2} and \ref{thm:collapsibility}, we have proved that the collapsibility number of the complex  $\vr{\I_n}{r}$ is $2^r$ for $r \in \{2, 3\}$. The complex $\vr{\I_n}{1}$ is $1$-dimensional and it is  isomorphic to the graph $\I_n$. Hence the collapsibility number of $\vr{\I_n}{1}$ is $2$. Further, it is easy  to check that $\vr{\I_n}{n-1}$ is $(2^{n-1}-1)$-dimensional and it is isomorphic to the join of $2^{n-1}$-copies of $S^0$. Hence the collapsibility number of $\vr{\I_n}{n-1}$ is $2^{n-1}$.  This leads us to make the following conjecture.  

\begin{conj}
	For $n \geq r+1$, the collapsibility number of $\vr{\I_n}{r}$ is $2^r$.
\end{conj}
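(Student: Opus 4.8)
The plan is to follow the two-step pattern of Theorems \ref{thm:collapsibility2} and \ref{thm:collapsibility}: first the lower bound, collapsibility $\geq 2^r$, from non-vanishing homology, and then the matching upper bound, $2^r$-collapsibility. For the lower bound I would begin by identifying $\vr{\I_{r+1}}{r}\cong S^{2^r-1}$: in $\I_{r+1}$ the only pairs at distance exceeding $r$ are the $2^r$ antipodal pairs (distance $r+1$), so a vertex set is a simplex exactly when it omits at least one vertex of every antipodal pair, making $\vr{\I_{r+1}}{r}$ the join of $2^r$ copies of $S^0$, i.e. $S^{2^r-1}$ (this recovers $\vr{\I_3}{2}\cong S^3$ and $\vr{\I_4}{3}\cong S^7$). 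Since the proof of \Cref{lem:retraction} only uses that coordinate-collapse maps are $1$-Lipschitz, it extends to every scale and gives a retraction $\vr{\I_n}{r}\to\vr{H}{r}$ for any cube subgraph $H$; taking $H$ an $(r+1)$-dimensional subcube yields a surjection onto $\tilde H_{2^r-1}(S^{2^r-1})\neq 0$, so $\tilde H_{2^r-1}(\vr{\I_n}{r})\neq 0$ and Proposition \ref{prop:collapsibilitysubcomplex} forces collapsibility $\geq 2^r$.

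For the upper bound the base case $n=r+1$ is immediate once one checks that $(S^0)^{*2^r}$ is $2^r$-collapsible: in this complex only the facets are contained in a unique maximal simplex, and removing them in a suitable order is a sequence of elementary $2^r$-collapses (as for the $4$-cycle $S^0*S^0$). For $n>r+1$ I would invoke Lew's minimal exclusion sequence (Proposition \ref{thm:minimalexclusion}) and construct a linear order $\prec$ of the maximal simplices with $d_\prec(\vr{\I_n}{r})\leq 2^r$. This rests on extending the facet analysis of \Cref{subsec:maximal}: with $l=\lfloor r/2\rfloor$, the extremal maximal simplices are the Hamming balls $B(v,l)$ for even $r=2l$ and the edge-centered unions $B(v,l)\cup B(w,l)$ with $v\sim w$ for odd $r=2l+1$ (these attain the Kleitman bound cited after \Cref{thm:maximalsimplex}), while every other facet either lies in a proper subcube — handled by induction on $n$ as in \Cref{thm:maximalsimplex} — or, in the odd case, is an asymmetric ball-plus-$K$ facet (the analog of $\A_n$) to be pre-collapsed by elementary $2^r$-collapses generalizing \Cref{lem:removingn3}.

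The technical heart is a ball-completion lemma generalizing Lemmas \ref{lem:3neighbour2} and \ref{lem:4neighbour}: one must find a threshold such that, once a maximal simplex contains that many vertices of a local ball $B(v,l)$, it contains all of $B(v,l)$. Granting this, one orders the ball-type (respectively ball-pair-type) facets first, and for a face $\tau$ whose smallest containing facet is extremal the argument runs as in the proofs of \Cref{thm:collapsibility2} and \Cref{thm:collapsibility}: any earlier facet that witnesses a fresh vertex of $\mes_\prec(\tau)$ is forced by the completion lemma to swallow the entire ball, so the minimal exclusion sequence can accumulate at most $2^r$ vertices overall — $2^r-1$ from the non-central vertices of the ball plus its center when $r$ is even (as for $N[v]$ when $r=2$), and $2^{r-1}$ from each of the two balls when $r$ is odd (as for $N(v)\cup N(w)$ when $r=3$) — giving $|M_\prec(\tau)|\leq 2^r$, and hence $2^r$-collapsibility.

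The main obstacle is this upper bound for $l\geq 2$, i.e. $r\geq 4$, precisely the range the present paper leaves open. When $r\in\{2,3\}$ one has $l=1$, the ball $B(v,1)$ is the single sphere $N(v)$, and the facets reduce to a short explicit list; for larger $r$ the balls span several spheres $S(v,1),\ldots,S(v,l)$, the maximal simplices form a much richer family than the three types of \Cref{thm:maximalsimplex}, and the completion lemma must control how a minimal exclusion sequence accumulates vertices across all of these spheres simultaneously. Calibrating the correct threshold, checking that overlapping local balls never push $|M_\prec(\tau)|$ past $2^r$, and carrying the even/odd bookkeeping (single ball versus ball-pair) through separately, is where the genuine difficulty lies.
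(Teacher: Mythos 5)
The statement you set out to prove is one of the paper's closing conjectures: the paper contains no proof of it, only the supporting cases $r \in \{1,2,3\}$ (\Cref{thm:collapsibility2}, \Cref{thm:collapsibility}, and the trivial $r=1$) and the degenerate case $r=n-1$, so there is no proof to compare yours against — the only question is whether your argument settles the open range, and it does not. To give credit where due: your lower bound is complete and correct. The projection maps in the proof of \Cref{lem:retraction} are distance non-increasing, so the retraction exists at every scale, not just scale $3$; $\vr{\I_{r+1}}{r}$ is indeed the join of $2^r$ copies of $S^0$ (the only pairs of vertices of $\I_{r+1}$ at distance exceeding $r$ are the $2^r$ antipodal pairs), hence homeomorphic to $S^{2^r-1}$; and \Cref{prop:collapsibilitysubcomplex} then forces collapsibility at least $2^r$ for every $n \geq r+1$. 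This is exactly the paper's lower-bound mechanism for $r=2,3$, and it generalizes without difficulty. The base case $n=r+1$ of the upper bound is likewise fine, since a complex of dimension $2^r-1$ is $2^r$-collapsible by deleting its facets one at a time.

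The genuine gap is the upper bound for $n>r+1$ and $r\geq 4$, where your text supplies a program rather than a proof, as you candidly acknowledge. Three ingredients are asserted but nowhere established. First, the facet classification: for $r=3$ this occupied all of \Cref{subsec:maximal} (six lemmas), and even there the family $\C_n$ is controlled only through the cardinality $8=2^r$ of its members; for $r\geq 4$ you have no analogue of \Cref{thm:maximalsimplex}, and in particular no guarantee that every non-extremal facet either has size at most $2^r$ or falls into a single ``ball-plus-$K$'' shape amenable to pre-collapsing. Second, the pre-collapse step: the proof of \Cref{lem:removingn3} leans on the exact structure of $N(v)\cup K_v^{i,j,k}$ and on \Cref{lem:freepair1} to certify free faces of size at most $8$; for larger $r$ you would need to identify all intermediate facet types, exhibit free faces of size at most $2^r$, and track the maximal simplices of every intermediate complex in the collapsing sequence. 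Third, and most seriously, the ball-completion lemma itself: the pattern $k(2)=3$ (\Cref{lem:3neighbour2}) and $k(3)=4$ (\Cref{lem:4neighbour}) is suggestive, and your even/odd bookkeeping ($2^r-1$ non-central vertices plus the center for $r=2l$, versus $2^{r-1}$ from each ball of an adjacent pair for $r=2l+1$) is arithmetically consistent with both known cases, but for $l=\lfloor r/2\rfloor \geq 2$ a facet meets $B(v,l)$ across several spheres, the exceptional facets (the analogues of $K_v^{i,j,k}$) are not understood, and no threshold is proved. Until these three pieces exist, \Cref{thm:minimalexclusion} cannot be invoked and the conjecture remains open: your proposal correctly locates the difficulty, but does not resolve it.
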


\section*{Acknowledgements}
I am very grateful to Basudeb Datta
for many helpful discussions. A part of this work was completed when I was  a postdoctoral fellow at IISc Bangalore, India. This work was partially supported by NBHM postdoctoral fellowship of the author. 

\bibliographystyle{abbrv}

\begin{thebibliography}{10}
	
	
	
	
	
	
	
	
	
	
	
	
	\bibitem{Adamaszek2013}
	M. {\sc  Adamaszek},
	\newblock\emph{Clique complexes and graph powers},
	\newblock{Israel Journal of  Mathematics}, 196(1)(2013), pp. 295--319.
	
	\bibitem{AdamaszekAdam2017}
	{\sc	M.    Adamaszek  and  H. Adams},
	\newblock\emph{The {V}ietoris--{R}ips complexes of a circle},
	\newblock{Pacific Journal of Mathematics}, 290(217), pp. 1--40.
	
	
	\bibitem{Adamaszek2021} {\sc M.  Adamaszek and  H.  Adams},  \newblock\emph{On Vietoris--Rips complexes of hypercube graphs}. \newblock{Journal of Applied and Computational Topology},  2(2022), pp. 177--192 .
	
	\bibitem{AAF2018}
	{\sc  	M. Adamaszek, H. Adams and F. Frick},
	\newblock\emph{Metric reconstruction via optimal transport},
	\newblock {SIAM Journal on Applied Algebra and Geometry}, 2(4)(2018), pp. 597--619.
	
	
	
	
	\bibitem{AdamaszekAdamFrick2016}
	{\sc  	M. Adamaszek, H. Adams, F. Frick, C. Peterson and  C.
		Previte-Johnson},
	\newblock\emph{Nerve complexes of circular arcs},
	\newblock {Discrete \& Computational Geometry}, 56(2016), pp. 251--273.
	
	
	\bibitem{Adamaszek2020}
	{\sc  	M. Adamaszek, H. Adams, E. Gasparovic, M. Gommel, E.
		Purvine, R. Sazdanovic, B. Wang, Y.Wang and L. Ziegelmeier},
	\newblock\emph{On homotopy types of Vietoris--Rips complexes of metric gluings},
	\newblock {Journal of Applied and Computational Topology}, 4(2020), pp. 424--454.
	
	
	
	\bibitem{AFV}
	{\sc  	M. Adamaszek, F. Frick and  A. Vakili},
	\newblock\emph{On homotopy types of {E}uclidean {R}ips complexes},
	\newblock {Discrete \& Computational Geometry}, 58(3)(2017), pp. 526--542.
	
	
	
	
	
	
	
	
	
	\bibitem{AdamsHeimPeterson}
	{\sc  	H. Adams, M. Heim and  C Peterson},
	\newblock\emph{Metric thickenings and group actions},
	\newblock {Journal of Topology and Analysis}, 2020, pp. 1--27.
	
	
	\bibitem{AhroniHolzmanJiang2019} {\sc   R. Aharoni,  R. Holzman and Z. Jiang}, \newblock\emph{Rainbow fractional matchings}, \newblock{Combinatorica}, 39(6)(2019), pp.  1191--1202.
	
	
	\bibitem{Bauer2021} {\sc   U.	Bauer},  \newblock\emph{Ripser: efficient computation of Vietoris–Rips persistence barcodes}, \newblock{Journal of Applied and Computational Topology}, 2021, 1--33.
	
	
	
	
	
	\bibitem{Bigdeli2020} {\sc   M.	Bigdeli and S. Faridi}, \newblock\emph{Chordality, d-collapsibility, and componentwise linear ideals}, \newblock{Journal of Combinatorial Theory, Series A}, 172(2020), pp. 105204.
	
	\bibitem{Civan2022} {\sc   T. Biyiko\u{g}lu and  Y. Civan},  \newblock\emph{Collapsibility of simplicial complexes and graphs}, \newblock{arXiv preprint arxiv:2201.13046.}
	
	
	
	\bibitem{bjorner} {\sc   A.	Bj{\"o}rner}, \newblock\emph {Topological methods}, \newblock{Handbook of combinatorics}, 2(1995), pp. 819--1872.
	
	\bibitem{Carlsson2009}
	{\sc   G. Carlsson},
	\newblock\emph{Topology and data},
	\newblock {Bulletin of the American Mathematical Society}, 46(2)(2009), pp. 255--308.
	
	
	\bibitem{Carlsson06}  {\sc   E.	Carlsson, G.  Carlsson and V. De Silva}, \newblock\emph{An algebraic topological method for feature identification}, \newblock{International Journal of Computational Geometry \& Applications}, 16(4)(2006), pp.  291--314.
	
	
	
	\bibitem{CarlssonIshkhanovDeSilvaZomorodian2008}
	{\sc   G. Carlsson, T. Ishkhanov, V. de~Silva and A. Zomorodian},
	\newblock\emph{On the local behavior of spaces of natural images},
	\newblock {International Journal of Computer Vision}, 76(2008), pp. 1--12.
	
	
	
	\bibitem{CarlssonZomorodian2005} {\sc   G. Carlsson, A. Zomorodian, A. Collins and  L. Guibas}, \newblock\emph{Persistence barcodes for	shapes}, \newblock{International Journal of Shape Modeling}, 11(2005), pp. 149--187.
	
	
	
	\bibitem{chambers2010vietoris}
	{\sc   E. W. Chambers, V. De Silva, J. Erickson and  R. Ghrist},
	\newblock\emph{Vietoris--{R}ips complexes of planar point sets},
	\newblock {Discrete \& Computational Geometry}, 44(1)(2010), 75--90.
	
	
	\bibitem{ChoiKimPark2020} {\sc  	I. Choi, J. Kim and  B. Park}, \newblock\emph{
		Collapsibility of non-cover complexes of graphs}, \newblock{
		Electronic Journal of Combinatorics}, 27(1)(2020), pp. P1.20. 
	
	\bibitem{DesilvaCarlsson2004} {\sc  	V. De Silva and  G. Carlsson},  \newblock\emph{Topological estimation using witness complexes}, \newblock{in
		SPBG04 Symposium on Point-Based Graphics}, 2004, pp. 157--166.
	
	
	
	\bibitem{DesilvaGhrist2006} {\sc   V. De Silva and  R. Ghrist},  \newblock\emph{Coordinate-free coverage in sensor networks with controlled boundaries via homology}, \newblock{The International Journal of Robotics Research}, 25(12)(2006), pp. 1205--1222.
	
	
	\bibitem{DesilvaGhrist2007} {\sc   V. De Silva and  R. Ghrist}, \newblock\emph{Coverage in sensor networks via persistent homology}, \newblock{Algebraic \& Geometric Topology}, 7.1(2007), 339--358.
	
	
	
	
	
	
	\bibitem{emmett2016topology}
	{\sc   K. J. Emmett},  \newblock {\em Topology of Reticulate Evolution},
	\newblock PhD thesis, Columbia University, 2016.
	
	\bibitem{emmett2015quantifying}
	{\sc   K. Emmett and R. Rabad\'{a}n},
	\newblock\emph{Quantifying reticulation in phylogenetic complexes using homology},
	\newblock {arXiv preprint arXiv:1511.01429}, 2015.
	
	
	
	
	
	\bibitem{gasparovic2018complete}
	{\sc   E. Gasparovic, M. Gommel, E. Purvine, R.  Sazdanovic, B. Wang,
		Y. Wang and  L.  Ziegelmeier}, 
	\newblock\emph{A complete characterization of the one-dimensional intrinsic {C}ech
		persistence diagrams for metric graphs},
	\newblock In {Research in Computational Topology},  Springer,2018, pp. 33--56.
	
	\bibitem{Ghys} {\sc    {\'E}. Ghys and P. de la Harpe},  \newblock\emph{Espaces métriques hyperboliques}, \newblock{Sur les groupes hyperboliques
		d’après Mikhael Gromov (Bern, 1988)}, 1990, pp.  27--45. 
	
	
	
	\bibitem{Gromov87}	{\sc   M. Gromov},  \newblock\emph{Hyperbolic groups}, \newblock{Essays in group theory}, Springer, New York, NY, 1987, pp. 75--263.
	
	
	\bibitem{Hatcher02}
	{\sc   A.~Hatcher},
	\newblock {\em {Algebraic Topology}},
	\newblock {Cambridge University Press}, 2002.
	
	
	
	\bibitem{Kalai1984} {\sc   G. Kalai},  \newblock\emph{Intersection patterns of convex sets}, \newblock{Israel Journal of  Mathematics}, 48(2--3)(1984), pp. 161--174.
	
	
	\bibitem{KalaiMeshulam2005}  {\sc   G. Kalai and R. Meshulam}, \newblock\emph{A topological colorful Helly theorem}, \newblock{Advances in Mathematics},
	191(2)(2005), pp. 305--311.
	
	\bibitem{Kimlew2021} {\sc   M. Kim and A. Lew},  \newblock\emph{Complexes of graphs with bounded independence number}, \newblock{Israel Journal of Mathematics}, 249(2002), pp. 83--120 (2022).
	
	
	
	
	\bibitem{Kleitman1966} {\sc D. J. Kleitman},  \newblock\emph{On a combinatorial conjecture of Erdős}, \newblock{Journal of  Combinatorial Theory 1} (1966), 209--214.
	
	\bibitem{Kozlov08}
	{\sc   D.~Kozlov}, 
	\newblock {\em {Combinatorial algebraic topology}}, volume~21 of {\em
		Algorithms and Computation in Mathematics},
	\newblock {Springer, Berlin}, 2008.
	
	
	\bibitem{lesnick2020quantifying}
	{\sc   M. Lesnick, R. Rabad\'{a}n and  D. IS Rosenbloom}, 
	\newblock\emph{Quantifying genetic innovation: {M}athematical foundations for the
		topological study of reticulate evolution}, 
	\newblock {SIAM Journal on Applied Algebra and Geometry}, 4(1)(2020), pp. 141--184.
	
	
	\bibitem{Lew2018}
	{\sc   A.  {Lew}}, 
	\newblock\emph{Collapsibility of simplicial complexes of hypergraphs},
	\newblock{Electronic Journal of Combinatorics}, 26(4)(2019), pp.  P4.10.
	
	
	
	
	
	
	
	\bibitem{MT09}
	{\sc   J.  Matou\v{s}ek and M.  Tancer}
	\newblock\emph{Dimension gaps between representability and collapsibility},
	\newblock {Discrete \& Computational Geometry}, 42(4)(2009), pp. 631--639.
	
	
	\bibitem{Muhammad2007}  {\sc  A. Muhammad and  A. Jadbabaie},  \newblock\emph{Dynamic coverage verification in mobile sensor
		networks via switched higher order Laplacians}, \newblock{in Robotics: Science \& Systems}, 2007, pp. p.72.
	
	
	
	
	
	
	
	
	
	
	
	
	
	
	
	
	
	
	\bibitem{Vietoris27}
	{\sc   L. Vietoris}, \newblock \emph{{\"U}ber den h{\"o}heren Zusammenhang kompakter R{\"a}ume und eine
		Klasse von zusammenhangstreuen Abbildungen},
	\newblock {Mathematische Annalen}, 97(1)(1927), pp. 454--472.
	
	
	\bibitem{virk20181}
	{\sc   {\v{Z}}. Virk}, 
	\newblock\emph{1-dimensional intrinsic persistence of geodesic spaces}, 
	\newblock {Journal of Topology and Analysis},2018, pp. 1--39.
	
	\bibitem{virk2017approximations}
	{\sc   {\v{Z}}. Virk},
	\newblock\emph{Approximations of 1-dimensional intrinsic persistence of geodesic
		spaces and their stability},
	\newblock {Revista Matem\'{a}tica Complutense}, 32(2019), pp. 195--213.
	
	\bibitem{Wegner1975} {\sc   G. Wegner}, \newblock\emph{d-Collapsing and nerves of families of convex sets},  \newblock{Archiv der Mathematik}, 26(1)(1975), pp. 317--321.
	
	\bibitem{Zomorodian2010} {\sc   A. Zomorodian},  \newblock\emph{Fast construction of the Vietoris-Rips complex}, \newblock{Computers \& Graphics}, 34(3)(2010), pp. 263--271.
	
	
	\bibitem{ZomorodianCarlsson2005} {\sc   A. Zomorodian and  G. Carlsson}, \newblock\emph{Computing persistent homology}, \newblock{Discrete \& Computational Geometry}, 33(2)(2005), pp. 249--274.
	
	
	
	
	
	
	%
	%
	%
	%
	%
	%
	%
	%
	%
	%
	%
	%
	%
	%
	%
	%
	%
	%
	%
	%
	%
	%
	%
	%
	%
	%
	%
	%
	%
	%
	%
	%
	%
	%
	%
	%
	%
	%
	%
	%
	%
	%
	%
	%
	%
	%
	%
	%
	%
	%
	%
	%
	%
	%
	%
	%
	%
	%
	%
	%
	%
	%
	%
	%
	%
	%
	%
	%
	%
	%
	%
	%
	%
	%
	%
	%
	%
	%
	%
	%
	%
	%
	%
	%
	%
	%
	%
	%
	%
	%
	%
	%
	%
	%
	%
	%
	%
	%
	%
	%
	%
	%
	%
	%
	%
	%
	%
	%
	%
	%
	%
	%
	%
	%
	%
	%
	%
	%
	%
	%
	%
	
	
\end{thebibliography}

\end{document}